\newtheorem{theorem}{Theorem}
\newtheorem{proposition}{Proposition}[section]
\newtheorem{corollary}[proposition]{Corollary}
\newtheorem{lemma}[proposition]{Lemma}
\newtheorem{conjecture}{Conjecture}
\newtheorem{question}{Question}
\DeclareMathOperator{\spann}{span}
\DeclareMathOperator{\rank}{rank}
\DeclareMathOperator{\prox}{Prox}
\DeclareMathOperator{\GL}{GL}
\DeclareMathOperator{\Gr}{Gr}
\DeclareMathOperator{\ned}{End}
\newcommand{\iii}{\mathtt{i}}
\newcommand{\jjj}{\mathtt{j}}
\newcommand{\kkk}{\mathtt{k}}
\newcommand{\ellellell}{\mathtt{l}}
\title[The Bernoulli property for matrix equilibrium states]{Totally ergodic generalised matrix equilibrium states have the Bernoulli property}
\begin{document}
\author{Ian D. Morris}
\begin{abstract}
We show that every totally ergodic generalised matrix equilibrium state is $\psi$-mixing with respect to the natural partition into cylinders and hence is measurably isomorphic to a Bernoulli shift in its natural extension. This implies that the natural extensions of ergodic generalised matrix equilibrium states are measurably isomorphic to Bernoulli processes extended by finite rotations. This resolves a question of Gatzouras and Peres in the special case of self-affine repelling sets with generic translations.

 MSC2020 codes: 28A80, 37D35 (primary); 37D20, 37C45 (secondary)
 \end{abstract}
\maketitle


\section{Background and motivation}\label{se:one}

Given a dynamical system $f \colon M \to M$ defined on a manifold $M$ it is a matter of fundamental interest to be able to describe the behaviour of typical trajectories. In practice ``typical'' is usually understood in measure-theoretic terms, leading us to ask what happens to trajectories whose starting point belongs to a set of full, or at least positive, Lebesgue measure. There are however situations in which this is insufficient: for example, a dynamical system may admit a repelling invariant set such that Lebesgue almost every point in an open neighbourhood of the invariant set eventually leaves that open set never to return; but it may still be of interest to understand which behaviours are typical among those points whose trajectories remain on the repelling set at all future times. Since the repelling set itself will usually have zero Lebesgue measure, the most natural way to understand this question is arguably to look for invariant measures supported on the repelling set with the largest possible dimension, which in this article will always be taken to mean Hausdorff dimension. This raises obvious fundamental questions: do such measures exist, are they unique, and what are their ergodic properties? Questions and conjectures in this direction have been raised on a number of occasions by various authors (see for example \cite{ChPe10,KePe96,LaGa92,ScWe01}). Among these, the following conjecture of D. Gatzouras and Y. Peres is typical:
\begin{conjecture}[\cite{GaPe97}]\label{qu:eensparkrangers}
Let $f \colon M \to M$ be an expanding map and $K\subseteq M$ a compact invariant set which satisfies specification. Then $K$ supports a unique ergodic $f$-invariant measure with the same Hausdorff dimension as $K$. This measure is mixing for $f$ and, perhaps, its natural extension is measurably isomorphic to a Bernoulli shift. 
\end{conjecture}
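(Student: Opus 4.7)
The plan is to attack the conjecture in two stages: first identify a canonical candidate for the measure of maximal dimension through thermodynamic formalism, and then extract mixing and Bernoulli properties from the explicit structure of that candidate. In the special case where $f$ has linear inverse branches and $K$ is a self-affine attractor, one codes $f$ via a subshift of finite type; Falconer's singular value function becomes a subadditive potential on that shift, and K\"aenm\"aki's subadditive variational principle yields an equilibrium state whose dimension matches the affinity dimension, and under generic translation assumptions equals the Hausdorff dimension of $K$ by the B\'ar\'any--Hochman--Rapaport machinery. The specification assumption translates in the symbolic model to the defining SFT being mixing, which gives uniqueness of this equilibrium state among invariant measures of full dimension; so the existence, uniqueness, and (directly) ergodicity clauses are handled by known thermodynamic formalism.

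For the mixing and Bernoulli clauses, the target is a uniform decoupling estimate for cylinder measures: one seeks a vanishing sequence $\psi(n) \to 0$ such that for all admissible words $u, v$ and all intermediate blocks $w$ of sufficient length $n$, $|\mu([uwv])/(\mu([u])\mu([v])) - 1| \leq \psi(n)$ uniformly in $u, v, w$. This is precisely the $\psi$-mixing property with respect to the cylinder partition, and it is classically known to imply the very weak Bernoulli condition and therefore, by Ornstein theory, Bernoullicity of the natural extension. The total ergodicity hypothesis prevents the measure from splitting off a nontrivial finite rotation factor and allows one to conclude Bernoullicity of the system itself rather than merely of a finite-sheeted extension.

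The central obstacle is the noncommutative, nonadditive character of the potential: unlike the classical H\"older setting, generalised matrix equilibrium states are not Gibbs in the strict sense, and cylinder masses are only controlled by products of matrix norms up to subexponential error. Establishing uniform decoupling therefore reduces to showing that the Oseledets/flag data governing the conditional measures on long cylinders converge to a common limit at a uniform rate, and this in turn depends on strong irreducibility and proximality properties of the generating matrix tuple, with the reducible and block-conformal situations requiring separate treatment via a decomposition into irreducible blocks. Transporting the conclusion from the symbolic model back to the geometric repeller, and extending beyond the self-affine setting so that the matrix cocycle is replaced by the derivative cocycle of a general expanding map with specification, is where the main remaining difficulty lies and is presumably why Gatzouras and Peres stated the Bernoulli clause only tentatively.
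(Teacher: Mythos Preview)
The statement you are attempting to prove is a \emph{conjecture}, and the paper does not prove it. On the contrary, the paper records that the conjecture is \emph{false in every clause}: there are expanding repellers with specification for which no measure of maximal dimension exists, for which such a measure exists but is not unique, and for which it exists but fails to be totally ergodic (hence fails to be mixing). Your proposal therefore cannot succeed as written, because it is aimed at a statement that has counterexamples.

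Your outline implicitly smuggles in extra hypotheses that are not in the conjecture: you restrict to self-affine repellers with generic translations, and you invoke a ``total ergodicity hypothesis'' which the conjecture does not contain. These are precisely the kinds of restrictions under which the paper \emph{does} obtain positive results (Theorem~\ref{th:main} and Corollary~\ref{co:xsapples}), but they change the target statement. Moreover, your claim that specification (i.e.\ mixing of the coding SFT) forces uniqueness of the equilibrium state is incorrect already in the self-affine setting: the relevant subadditive potential $\varphi^s$ can have several ergodic equilibrium states on the full shift, as documented in \cite{BaFe11,MoSe19b} and discussed after Theorem~\ref{th:bomo}. So even on its own restricted terms the proposal has a genuine gap at the uniqueness step, quite apart from the fact that the unrestricted conjecture is false.
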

Here an invariant set is said to satisfy \emph{specification} if it admits a Markov partition which satisfies a certain quantitative topological mixing property; for details we refer the reader to \cite{GaPe97}. This property is in particular satisfied if the dynamical system $f \colon K \to K$ is topologically conjugate to the full shift on finitely many symbols, which will be the case for all of the examples considered in this article. 

Remarkably, the conjecture of Gatzouras and Peres has been answered negatively in every particular. It has been shown that the measure of maximal dimension can fail to exist, and even that the supremum of the dimensions of invariant measures can fall short of the dimension of $K$ itself (see \cite{DaSi17}); it has been shown that the measure of maximal dimension can exist but fail to be unique (see \cite{BaFe11,MoSe19b}); and it has been shown that the measure of maximal dimension can exist but fail to be totally ergodic (and in particular fail to be mixing), a result which is implied by previous work of the author \cite[\S2]{Mo18a}. 

The fundamental difficulty of Conjecture \ref{qu:eensparkrangers} is as follows. It has long been known that if $f \colon M \to M$ is an expanding map then its absolutely continuous invariant measures can be characterised as the invariant measures $\mu$ which maximise the quantity $h(\mu)-\int \log |\det D_xf|\,d\mu(x)$, where $h(\mu)$ denotes the entropy of $\mu$ with respect to $f$; we call such measures \emph{equilibrium states} of the function $x \mapsto -\log |\det D_xf|$, which we refer to as a \emph{potential}. This definition can be alternatively presented by saying that the absolutely continuous invariant measure $\mu$ maximises the entropy minus the total of the $d:= \dim M$ different Lyapunov exponents of $f$ with respect to $\mu$. When the dimension of the set $K$ in Conjecture \ref{qu:eensparkrangers} is instead equal to $s \in (0,d)$, the measure of maximal dimension is believed to be typically characterised by the property of maximising the entropy minus the sum of the $\lfloor s\rfloor$ least expanding Lyapunov exponents, minus $(s-\lfloor s\rfloor)$ times the next least expanding Lyapunov exponent. If all of the Lyapunov exponents are equal then this sum of weighted Lyapunov exponents is simply $(s/d)$ times the logarithm of the Jacobian, and the potential can then be realised as a continuous real-valued function. This makes the classical thermodynamic formalism of Bowen, Ruelle and Sinai, which applies to H\"older continuous real-valued potentials, applicable to the problem. For this reason Conjecture \ref{qu:eensparkrangers} has long been satisfactorily understood in the special case of repelling sets of \emph{conformal} expanding maps in which all Lyapunov exponents of a given invariant measure are guaranteed to be equal. Outside this special case the problem becomes far more difficult since we are obliged to consider equilibrium states of a potential which is defined in terms of several distinct Lyapunov exponents and cannot be reduced to the classical thermodynamic formalism of continuous potentials such as $x \mapsto -\log |\det D_xf|$.  To understand the candidate measures of maximal dimension in this case it seems to be necessary to develop a ``non-commutative'' thermodynamic formalism capable of dealing with Lyapunov exponents in place of the ergodic average of a function, in which averages of ergodic sums are replaced with averages of subadditive functions given by the logarithms of the norms of certain linear cocycles. This project has seen substantial progress in the last few years (see e.g. \cite{BoMo18,FeKa11,FeSh14,KaMo18,MoSe19a,Pa19,Pi20}) and this article is concerned with the description in detail of the equilibrium states which arise in this thermodynamic formalism in the case of locally constant cocycles over the full shift. 

To address the full generality of Conjecture \ref{qu:eensparkrangers} would appear to require a theory of equilibrium states which allowed the consideration of arbitrary differentiable (or perhaps just H\"older continuous) linear cocycles defined over repelling sets. Such a theory is significantly beyond the range of current techniques, and so far the development of this thermodynamic formalism has focused principally on the simplest nontrivial context, namely the equilibrium states of locally constant linear cocycles over full symbolic shifts. This is precisely the thermodynamic formalism needed to understand the (candidate) invariant measures of maximum dimension for \emph{self-affine sets}, a class of fractal objects of independent interest which (under certain assumptions) correspond to the case of Conjecture \ref{qu:eensparkrangers} in which $M=\mathbb{R}^d$ and in which $D_xf \in \GL_d(\mathbb{R})$ takes only finitely many values when $x$ belongs to the invariant set $K$. In the present work we completely describe the qualitative mixing properties of equilibrium states of linear cocycles of this type: we will show that every ergodic \emph{generalised matrix equilibrium state} has the property that its natural extension is measurably isomorphic to the product of a Bernoulli process and a rotation of a finite set. In particular the natural extension of every totally ergodic generalised matrix equilibrium state is measurably isomorphic to a Bernoulli process. This completely resolves that part of Conjecture \ref{qu:eensparkrangers} which is concerned with mixing and the Bernoulli property in the special case where $K$ is a self-affine set which  is already known to support an invariant measure whose dimension is equal to a theoretical maximum value defined by Falconer in \cite{Fa88}. This property is known to hold for self-affine sets which are ``typical'' in certain precise senses (see \cite[Theorem 4]{JoPoSi07} and \cite[Theorem 1.9]{Fe19}).

This motivates us to ask the following speculative question:
\begin{question}\label{qu:ack}
Let $f \colon M \to M$ be a $C^2$ expanding map and $K\subseteq M$ a compact invariant set which satisfies specification and supports a unique ergodic $f$-invariant measure with the same Hausdorff dimension as $K$. Is the natural extension of this measure measurably isomorphic to the product of a Bernoulli measure with a rotation on a finite set?
\end{question}
For self-affine repelling sets which support a measure of dimension equal to the theoretical maximum defined by Falconer, the results in this article suffice to answer Question \ref{qu:ack} affirmatively. However, the full range of possible behaviours outside this class of repellers is far from being fully understood even in the self-affine case, and it is far from clear whether or not further pathological special cases will be discovered. Beyond the self-affine class we anticipate that it should not be profoundly difficult to extend our methods and results to the case of typical repellers which satisfy a \emph{fibre-bunching condition} on the derivative cocycle $(x,n) \mapsto D_xf^n$ as in \cite{BuPa20,Fa94,Pa19}, particularly if a strong additional assumption is used such as the ``pinching and twisting'' conditions introduced by Bonatti and Viana in \cite{BoVi04}. The removal of the fibre-bunching condition seems in our opinion likely to be a more substantial obstacle to further developments of these ideas.

\section{Generalised matrix equilibrium states}

\subsection{Fundamental definitions and notation} 

The class of measures which we investigate in this article, which we call generalised matrix equilibrium states, are defined on abstract symbolic spaces and can be related to self-affine sets via a coding procedure which is described later in this section. In order to describe these objects we require some fundamental definitions. For each $N \geq 2$ let us define $\Sigma_N:=\{1,\ldots,N\}^{\mathbb{N}}$ and equip this set with the infinite product topology with respect to which it is compact and metrisable. We define the \emph{shift transformation} $\sigma \colon \Sigma_N \to \Sigma_N$ by $\sigma[(x_k)_{k=1}^\infty]:=(x_{k+1})_{k=1}^\infty$ and we denote the set of all $\sigma$-invariant Borel probability measures on $\Sigma_N$ by $\mathcal{M}_\sigma(\Sigma_N)$. For convenience we will refer to such measures simply as \emph{shift-invariant measures} on $\Sigma_N$. We equip $\mathcal{M}_\sigma(\Sigma_N)$ with the weak-* topology, which is compact and metrisable and has the property that $\mu \mapsto \int f\,d\mu$ defines a continuous function $\mathcal{M}_\sigma(\Sigma_N) \to \mathbb{R}$ for every $f \in C(\Sigma_N)$. We likewise define $\hat\Sigma_N:=\{1,\ldots,N\}^{\mathbb{Z}}$ with the infinite product topology, $\hat\sigma \colon \hat\Sigma_N \to \hat\Sigma_N$ by $\hat\sigma[(x_k)_{k \in \mathbb{Z}}]:=(x_{k+1})_{\mathbb{Z}}$, and let $\mathcal{M}_{\hat\sigma}(\hat\Sigma_N)$ denote the set of all $\hat\sigma$-invariant measures on $\hat\Sigma_N$ equipped with its weak-* topology with respect to which it is compact and metrisable. 

If $\iii=(i_k)_{k=1}^n \in \{1,\ldots,N\}^n$ is a finite sequence over the symbols $1,\ldots,N$ then we refer to $\iii$ as a \emph{word} over $\{1,\ldots,N\}$; we call $n$ the \emph{length} of the word $\iii$ and denote it by $|\iii|$. If $\iii=(i_k)_{k=1}^n$ and $\jjj=(j_k)_{k=1}^m$ are words then we let $\iii\jjj$ denote the word of length $n+m$ whose first $n$ symbols are $i_1,\ldots,i_n$ and whose next $m$ symbols are $j_1,\ldots,j_m$, and call $\iii\jjj$ the \emph{concatenation} of $\iii$ with $\jjj$. If $\iii$ is a word then for each $n \geq 1$ we let $\iii^n$ denote the concatenation of $n$ successive copies of $\iii$ and call this word the \emph{$n^{\mathrm{th}}$ power of $\iii$}. We denote the set of all words over $\{1,\ldots,N\}$ by $\Sigma_N^*$ and observe that the map $(\iii,\jjj) \mapsto \iii\jjj$ defines a semigroup operation on $\Sigma_N^*$. If $x=(x_k)_{k=1}^\infty \in \Sigma_N$ and $n \geq1$ are given, we let $x|_n$ denote the word $(x_k)_{k=1}^n$; if $\iii \in \Sigma_N^*$ is given, we let $[\iii]$ denote the set of all $x \in \Sigma_N$ such that $x|_n=\iii$. We will also write $x|_n:=(x_k)_{k=1}^n$ when $x \in \hat\Sigma_N$ and denote the set $\{x\in\hat\Sigma_N \colon x|_n=\iii\}$ by $[\iii]$ when the difference of context is clear. We refer to sets of the form $[\iii]$ as \emph{cylinder sets}. Cylinder sets generate the topology of $\Sigma_N$, and shifted cylinder sets $\hat\sigma^n [\iii]$ suffice to generate the topology of $\hat\Sigma_N$. We will usually denote words of length $1$ simply by the symbol in $\{1,\ldots,N\}$ which appears in that word, and the cylinders defined by words of length $1$ are therefore denoted $[1],\ldots,[N]$.

We define the natural projection $\pi \colon \hat\Sigma_N\to \Sigma_N$ by $\pi[(x_k)_{k \in \mathbb{Z}}]:=(x_k)_{k=1}^\infty$ which is clearly continuous and surjective. It is clear that $\hat\mu \mapsto \pi_*\hat\mu$ defines a continuous function $\mathcal{M}_{\hat\sigma}(\hat\Sigma_N) \to \mathcal{M}_{\sigma}(\Sigma_N)$ and since shift-invariant measures on $\Sigma_N$ and on $\hat\Sigma_N$ are in both cases characterised by their values on cylinder sets this map is bijective. Given $\mu \in \mathcal{M}_\sigma(\Sigma_N)$ we will simply write $\hat\mu$ for the unique element of $\mathcal{M}_{\hat\sigma}(\hat\Sigma_N)$ such that $\mu=\pi_*\hat\mu$, and we call $\hat\mu$ the natural extension of the measure $\mu$. Since properties such as ergodicity, total ergodicity and mixing can be characterised in terms of correlations between cylinder sets it is not difficult to see that each of those properties holds for an invariant measure $\mu \in \mathcal{M}_\sigma(\Sigma_N)$ if and only if the corresponding property holds for $\hat\mu\in \mathcal{M}_{\hat\sigma}(\hat\Sigma_N)$. A measure $\hat\mu$ on $\hat\Sigma_N$ will be called a Bernoulli measure if it has the form $\hat\mu=(\sum_{i=1}^N p_i \delta_i)^{\mathbb{Z}}$ for some probability vector $(p_1,\ldots,p_N)$. We will say $\hat \mu$ \emph{has the Bernoulli property} if there exist a Bernoulli measure $\hat\nu$ on $\hat\Sigma_N$ and a measure-space isomorphism $\phi \colon \hat\Sigma_N \to \hat\Sigma_N$ such that $\phi \circ \hat\sigma=\hat\sigma \circ \phi$ and $\phi_*\hat\mu = \hat\nu$. (This isomorphism must be understood with respect to the completions of the relevant Borel $\sigma$-algebras: see \S\ref{se:bananas} for details.) Clearly every Bernoulli measure trivially has the Bernoulli property, but the reverse is in general false.
 
\subsection{Potentials and equilibrium states}

For the remainder of this article a \emph{potential} will be any function $\Phi \colon \Sigma_N^* \to (0,+\infty)$, where $N \geq 2$ is arbitrary. We call a potential \emph{submultiplicative} if it satisfies the inequality $\Phi(\iii\jjj)\leq \Phi(\iii)\Phi(\jjj)$ for all $\iii,\jjj \in \Sigma_N^*$ and \emph{quasimultiplicative} if there exist $\delta>0$ and $m \geq 1$ such that $\max_{|\kkk|\leq m} \Phi(\iii\kkk\jjj)\geq \delta \Phi(\iii)\Phi(\jjj)$ for all $\iii,\jjj \in\Sigma_N^*$. If $\Phi \colon \Sigma_N^* \to (0,+\infty)$ is a submultiplicative potential then we define its \emph{pressure} to be the limit
\[P(\Phi):=\lim_{n \to \infty} \frac{1}{n}\log \sum_{\substack{\iii \in \Sigma_N^*\\|\iii|=n}} \Phi(\iii)\]
which exists by subadditivity. If additionally $\mu$ is a shift-invariant measure on $\Sigma_N$ then we define the \emph{ergodic average} of $\Phi$ to be the quantity
\begin{align*}\Lambda(\Phi,\mu)&:=\lim_{n \to \infty} \frac{1}{n}\int_{\Sigma_N} \log \Phi(x|_n) d\mu(x)\\
&=\lim_{n \to \infty} \frac{1}{n}\int_{\hat\Sigma_N} \log \Phi(x|_n) d\hat\mu(x)   = \lim_{n  \to \infty} \frac{1}{n}\sum_{\substack{\iii \in \Sigma_N^*\\|\iii|=n}} \mu([\iii])\log\Phi(\iii);\end{align*}
this limit likewise exists by subadditivity. When we wish to emphasise that we are working on the two-sided shift space $\hat\Sigma_N$ we may also denote this quantity by $\Lambda(\Phi,\hat\mu)$. If $\Phi$ is a submultiplicative potential defined on $\Sigma_N^*$ then the pressure of $\Phi$ admits the characterisation
\[P(\Phi)=\sup_{\mu \in \mathcal{M}_\sigma(\Sigma_N)} \left[h(\mu) + \Lambda(\Phi,\mu)\right] =\sup_{\hat\mu \in \mathcal{M}_{\hat\sigma}(\hat\Sigma_N)} \left[h(\hat\mu) + \Lambda(\Phi,\hat\mu)\right] \]
a fact which follows from more general results obtained in \cite{CaFeHu08}. We will prefer to say that an \emph{equilibrium state} of $\Phi$ is a measure $\mu \in \mathcal{M}_\sigma(\Sigma_N)$ such that $P(\Phi)=h(\mu)+\Lambda(\Phi,\mu)$, and in this case we call the measure $\hat\mu$ the natural extension of an equilibrium state. However, this choice of terminology is somewhat arbitrary and is chosen solely in order to have distinct names for $\mu$ and for $\hat\mu$. Since $\mathcal{M}_\sigma(\Sigma_N)$ is a compact metrisable topological space with respect to its weak-* topology and the quantities $h(\mu)$ and $\Lambda(\phi,\mu)$ depend upper semi-continuously on $\mu \in \mathcal{M}_\sigma(\Sigma_N)$, the existence of at least one equilibrium state for an arbitrary submultiplicative potential follows by elementary topological considerations. Since both $h(\mu)$ and $\Lambda(\Phi,\mu)$ are also affine as functions of the measure $\mu$, the set of equilibrium states of a submultiplicative potential is moreover convex and its extreme points are ergodic measures with respect to $\sigma$. 

We may now define generalised matrix equilibrium states. If $\mu$ is a shift-invariant measure on $\Sigma_N$, where $N \geq 2$, and $(A_1,\ldots,A_N) \in \GL(V)^N$ is a tuple of linear maps defined on a real finite-dimensional vector space $V$, we write $A_\iii:=A_{i_1}A_{i_2}\cdots A_{i_n}$ for every $\iii=(i_k)_{j=1}^n \in \Sigma_N^*$ and define
\begin{align*}\lambda_1(A,\mu)&:=\lim_{n \to \infty}  \frac{1}{n} \int_{\Sigma_N} \log \|A_{x|_n}\| d\mu(x)\\
& =\lim_{n \to \infty}  \frac{1}{n} \int_{\hat\Sigma_N} \log \|A_{x|_n}\| d\hat\mu(x)=\lim_{n \to \infty} \frac{1}{n} \sum_{|\iii|=n} \mu([\iii]) \log \|A_\iii\|  \end{align*}
which we call the \emph{top Lyapunov exponent} of $(A_1,\ldots,A_N)$ with respect to $\mu$.  We say that a shift-invariant measure $\mu$ on $\Sigma_N$, where $N \geq 2$, is a \emph{generalised matrix equilibrium state} if for some integer $k \geq 1$ there exist for each $j=1,\ldots,k$ a finite-dimensional real vector space $V_j$, a tuple $(A_1^{(j)},\ldots,A_N^{(j)}) \in \GL(V_j)^N$ of invertible linear maps, and a real number $\beta_j>0$, such that $\mu$ is an equilibrium state of the potential $\Phi \colon \Sigma_N* \to (0,+\infty)$ defined by
\begin{equation}\label{eq:stoomy-brown}\Phi(\iii):=\prod_{j=1}^k \left\|A_\iii^{(j)}\right\|^{\beta_j}.\end{equation}
It is easy to check that  $\mu$ is an equilibrium state of $\Phi$ if and only if it maximises the quantity $h(\nu)+\sum_{j=1}^k \beta_j\lambda_1(A^{(j)},\nu)$ over all $\nu \in \mathcal{M}_\sigma(\Sigma_N)$. The relation between tuples of linear maps $A_i^{(j)}$ and constants $\beta_j$ on the one hand, and generalised matrix equilibrium states on the other hand, is not bijective: a potential of the above form may have multiple equilibrium states (see for example \cite{BaFe11,BoMo18,FeKa11,MoSe19b}) and a single measure may be the equilibrium state of more than one such potential. Indeed, when proving theorems concerning generalised matrix equilibrium states it is often advantageous to look for spaces $V_j$ and tuples $(A_1^{(j)},\ldots,A_N^{(j)})$ which yield the same equilibrium state but have additional properties to those strictly required by the definition.

In the case $k=1$ generalised matrix equilibrium states as defined above have been previously referred to simply as \emph{matrix equilibrium states} or \emph{matrix Gibbs states}, which motivates our choice of terminology: see for example \cite{Mo18a,Mo18b,Pi20}. Matrix equilibrium states are sufficient to study the (candidate) measures of maximal dimension for self-affine subsets of the plane, but for self-affine subsets of $\mathbb{R}^d$ with $d>2$ it seems to be unavoidably necessary to consider the case $k=2$: specifically, one must consider potentials of the form
\begin{equation}\label{eq:blue-child}\Phi^s(\iii):=\left\|A_\iii^{\wedge \lfloor s\rfloor}\right\|^{1+\lfloor s\rfloor -s} \left\|A_\iii^{\wedge\lceil s\rceil}\right\|^{s-\lfloor s\rfloor}\end{equation}
for an appropriate tuple $(A_1,\ldots,A_N) \in \GL_d(\mathbb{R})^N$ and real number $s \in (0,d)$, see \S\ref{ss:kaenmaki} below for details. Matrix equilibrium states (i.e. the case $k=1$) are substantially easier to handle and results in the general case have typically been preceded in the literature by results in the case $k=1$; the reason for this difference in difficulty can be attributed to the fact that the case $k=1$ can be understood using linear-algebraic techniques by embedding the linear maps $A_1,\ldots,A_N\in \GL(V)$ in the subalgebra of $\ned(V)$ which they generate, whereas for general $k$ substantial progress has only been made by embedding the tuples $(A_1^{(j)},\ldots,A_N^{(j)})$ in a linear algebraic group and applying ideas from algebraic geometry (see for example \cite{BoMo18,MoSe19a}). While the definition of a generalised matrix equilibrium state also makes sense in some cases in which the linear maps $A_i^{(j)}$ are not assumed to be invertible, this is more difficult to handle mathematically when $k>1$ and in the present work we will always assume the invertibility of the linear maps $A_i^{(j)}$. We will also find the following terminology helpful: if $V$ is a finite-dimensional real vector space then $(A_1,\ldots,A_N) \in \GL(V)^N$ will be called \emph{irreducible} if there does not exist a nonzero proper linear subspace $U$ of $V$ which is preserved by every $A_i$, and \emph{strongly irreducible} if there does not exist a finite collection $\{U_1,\ldots,U_m\}$ of nonzero proper linear subspaces of $V$ such that every $A_i$ induces a permutation of the set $\{U_1,\ldots,U_m\}$. 

\subsection{Main results and previous literature}

The early literature on matrix equilibrium states focused on studying the associated pressure function, proving the existence of equilibrium states, characterising their uniqueness (or otherwise) and describing their supports (see for example \cite{Fe03,FeLa02,FeKa11,Mo18a}) with results on potentials of the form $\Phi^s$ as in \eqref{eq:blue-child} available only in special cases \cite{FaSl09,Fr15,JaJaLiSt16,KaMo18}. 
The broader concept of a generalised matrix equilibrium state was introduced in \cite{BoMo18} where an upper bound was given for the number of ergodic generalised matrix equilibrium states that can correspond to a single potential, and where it was also shown that all generalised matrix equilibrium states as defined above are fully supported on the relevant symbolic space $\Sigma_N$; these results in particular apply to potentials of the form \eqref{eq:blue-child} and subsumed many prior results on that topic. In parallel with this work the author investigated several aspects of the ergodic properties of matrix equilibrium states in \cite{Mo18a,Mo19a}, showing in particular that matrix equilibrium states are mixing if and only if they are totally ergodic and giving necessary and sufficient conditions for mixing. This left open the question of whether mixing matrix equilibrium states admit stronger properties such as being exact endomorphisms or having the Bernoulli property, and also left open the question of whether similar results hold for generalised matrix equilibrium states. The former question was partially addressed by M. Piraino in \cite{Pi20} in the case of matrix equilibrium states, where a sufficient condition was given for a matrix equilibrium state to have the Bernoulli property. This nonetheless left unresolved the question of whether \emph{every} matrix equilibrium state which is mixing also has the Bernoulli property and did not address the corresponding questions for generalised matrix equilibrium states. In the present work we give complete answers to all of these questions in the following result:
\begin{theorem}\label{th:main}
Let $N \geq 2$ and let $\mu$ be a totally ergodic generalised matrix equilibrium state on $\Sigma_N$. Then $\mu$ is \emph{$\psi$-mixing}:
\[\lim_{n \to \infty} \sup_{\iii,\jjj \in \Sigma_N^*} \left|\frac{\mu([\iii] \cap \sigma^{-n-|\iii|} [\jjj])}{\mu([\iii])\mu([\jjj])} -1\right|=0,\]
and its natural extension $\hat\mu$ has the Bernoulli property.
\end{theorem}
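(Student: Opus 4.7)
The plan is to first establish the $\psi$-mixing estimate and then deduce the Bernoulli property from it by standard Ornstein-theoretic machinery. I would begin by passing to a favourable algebraic model: following the reductions in \cite{BoMo18, MoSe19a} one may replace each tuple $(A_1^{(j)}, \ldots, A_N^{(j)})$ by one acting strongly irreducibly and proximally on an appropriate representation $V_j$ (or exterior power thereof) without altering the equilibrium state $\mu$. The resulting potential $\Phi$ is quasimultiplicative, which together with the submultiplicative thermodynamic formalism yields a Gibbs-type estimate $\mu([\iii]) \asymp e^{-|\iii|P(\Phi)}\Phi(\iii)$ uniformly in $\iii \in \Sigma_N^*$.

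Since
\[\mu\bigl([\iii] \cap \sigma^{-n-|\iii|}[\jjj]\bigr) = \sum_{|\kkk|=n} \mu\bigl([\iii\kkk\jjj]\bigr),\]
the Gibbs bound reduces $\psi$-mixing to the uniform (in $\iii,\jjj$) convergence
\[\sum_{|\kkk|=n}\frac{\Phi(\iii\kkk\jjj)}{\Phi(\iii)\Phi(\kkk)\Phi(\jjj)}\,\mu([\kkk])\longrightarrow 1.\]
Submultiplicativity gives the upper bound $\le 1$ on each summand, and the reverse comparison is a geometric statement: $\|A_{\iii\kkk\jjj}^{(j)}\|$ is comparable to $\|A_\iii^{(j)}\|\cdot\|A_\kkk^{(j)}\|\cdot\|A_\jjj^{(j)}\|$ precisely when $A_\kkk^{(j)}$ moves the top singular direction of $A_\jjj^{(j)}$ to one uniformly transverse to the dominated hyperplane of $A_\iii^{(j)}$.

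I would then invoke a Furstenberg--Le Page type equidistribution theorem: under the strong irreducibility and proximality secured above, the pushforward by the projective cocycle of the Gibbs weights on cylinders of length $n$ converges, with exponential rate and uniformly in the starting direction, to a single stationary measure on each flag variety of $V_j$. Total ergodicity enters here crucially: without it, the pushforwards could equidistribute only among a finite family of measures cyclically permuted by a rotation, producing a periodic residue that obstructs even plain mixing, and the author's earlier work on matrix equilibrium states identifies total ergodicity as precisely the condition which rules this out. This delivers the required uniform convergence simultaneously over all $k$ representations. With $\psi$-mixing in hand, the natural extension $\hat\mu$ is absolutely regular with respect to its cylinder generator, and standard Ornstein-theoretic arguments promote this to the Bernoulli property.

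The main obstacle, I expect, will be controlling the joint equidistribution across the several representations $V_1,\ldots,V_k$ uniformly in $\iii,\jjj$. Because the weights $\beta_j$ are arbitrary positive reals with no commensurability hypothesis, one cannot compress the problem into a single representation via tensor powers; one must instead establish a simultaneous Furstenberg-type limit theorem on the product of the relevant flag manifolds, together with an exponential rate strong enough to upgrade pointwise convergence to the uniform supremum over all $\iii,\jjj \in \Sigma_N^*$ which the definition of $\psi$-mixing demands.
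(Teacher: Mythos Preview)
Your proposal contains two substantial gaps relative to the paper's argument.

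First, the reduction step is overstated. The reductions in \cite{BoMo18} (reproduced here as Theorem~\ref{th:bomo2}) deliver only \emph{irreducibility} of each tuple, not strong irreducibility; and the present paper adds to this the simplicity of the top Lyapunov exponent (Theorem~\ref{th:reducks}) via an exterior-power trick, but again not strong irreducibility. There is no general mechanism for passing to strongly irreducible tuples without changing the equilibrium state, and indeed Proposition~\ref{pr:not-tot} exhibits an equilibrium state whose defining tuples are irreducible but not strongly irreducible. The paper handles this gap not by eliminating it but by working with a \emph{transitive subspace class} $\mathcal{W}\subseteq\prod_j\Gr_{\ell_j}(V_j)$ and the associated potential $\Phi_{\mathcal{W}}$. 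Total ergodicity enters precisely here: it is used (Theorem~\ref{th:prox-unique}(iii)) to show that $\mathcal{W}$ is \emph{primitive}, which is the exact algebraic input needed to make the connecting words $\kkk$ in the quasimultiplicativity estimate have a \emph{fixed} length $m$. Your description of total ergodicity as ``ruling out a periodic residue in equidistribution'' is the right intuition but does not identify this mechanism.

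Second, your route to $\psi$-mixing is more ambitious than necessary and probably not currently feasible. You aim to prove the uniform convergence of $\mu([\iii]\cap\sigma^{-n-|\iii|}[\jjj])/\mu([\iii])\mu([\jjj])$ to $1$ directly, via a Furstenberg--Le~Page equidistribution with exponential rate, uniform over products of flag manifolds. The paper explicitly remarks after Theorem~\ref{th:main} that an exponential rate in this generality appears to require new transfer-operator technology and is left open. The paper instead takes a much shorter path: it establishes only the two-sided bound
\[
\kappa \leq \frac{\mu([\iii]\cap\sigma^{-m-|\iii|}[\jjj])}{\mu([\iii])\mu([\jjj])} \leq K
\]
for a \emph{single fixed} gap $m$ (this is \eqref{eq:farty-red}--\eqref{eq:sandbork}, obtained from the primitive-class estimate \eqref{eq:ghasty-pink} of Theorem~\ref{th:core0} combined with the Gibbs inequality), and then invokes Bradley's theorem (Theorem~\ref{th:excuse-me-doctor-gunson-but-why-do-you-have-a-slice-of-ham-in-your-handbag}) to upgrade this to full $\psi$-mixing. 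No equidistribution or spectral-gap argument is needed. The Bernoulli property then follows from Friedman--Ornstein via the weak Bernoulli criterion, as you correctly anticipate.
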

It is interesting to ask whether the rate of convergence in the above limit is exponential as is the case in the classical thermodynamic formalism of additive potentials (see for example \cite{Ba00,Bo75,PaPo90}); this may have implications for the statistical properties of typical trajectories with respect to $\mu$ as in, for example, \cite[\S7]{PhSt75}. A sufficient condition for this exponential rate of convergence in the case of matrix equilibrium states was given by M. Piraino in \cite{Pi20}, but to answer this question in the generality of Theorem \ref{th:main} seems likely to require additional developments in the transfer operator theory of linear cocycles.

It is not difficult to extend Theorem \ref{th:main} to the case where $\mu$ is ergodic but not totally ergodic, although the fundamental result in this direction is cumbersome to state:
\begin{theorem}\label{th:recoding}
Let $k \geq 1$ and $N \geq 2$. For each $j=1,\ldots,k$ let $V_j$ be a finite-dimensional real vector space and let $(A_1^{(j)},\ldots,A_N^{(j)}) \in \GL(V_j)^N$ and $\beta_j>0$. For all $\iii \in \Sigma_N^*$ define
\[\Phi(\iii):=\prod_{j=1}^k \left\|A_\iii^{(j)}\right\|^{\beta_j}\]
and let $\mu$ be an ergodic equilibrium state of $\Phi$. If $\mu$ is not totally ergodic then there exists an integer $n$ satisfying $1 < n \leq\prod_{j=1}^k \dim V_j$ with the following properties. Let $\eta \colon  \{\iii \in \Sigma_N^* \colon |\iii|=n\}\to \{1,\ldots,N^n\}$ be the map which takes each word $\iii \in \Sigma_N^*$ of length $n$ to the integer representing its position in the lexicographical ordering on $\{\iii \in \Sigma_N^* \colon |\iii|=n\}$ and define a homeomorphism $\iota \colon \Sigma_{N} \to \Sigma_{N^n}$ satisfying $\sigma \circ \iota = \iota \circ \sigma^n$ by $\iota[(x_\ell)_{\ell=1}^\infty]:=(\eta(x_{(q-1)n+1}\cdots x_{qn}))_{q=1}^\infty$ for every $(x_\ell)_{\ell=1}^\infty \in \Sigma_N$.  For each $j=1,\ldots,k$ define an $N^n$-tuple $(B_1^{(j)},\ldots,B_{N^n}^{(j)}) \in \GL(V_j)^{N^n}$ by $B_i^{(j)}:=A_{\eta^{-1}(i)}^{(j)}$ for every $i=1,\ldots,N^n$ and $j=1,\ldots,k$, and define a potential $\Psi \colon \Sigma_{N^n}^* \to(0,+\infty)$ by
\[\Psi(\jjj)=\prod_{j=1}^k \left\|B_\jjj^{(j)}\right\|^{\beta_j}\]
for all $\jjj \in \Sigma_{N^n}^*$. Then we may write $\mu = \frac{1}{n}\sum_{i=0}^{n-1} \sigma^i_*\nu$ where $\nu$ is a $\sigma^n$-invariant measure on $\Sigma_N$ and where for every $i=0,\ldots,n-1$ the measure $(\iota \circ \sigma^i )_*\nu \in \mathcal{M}_\sigma(\Sigma_{N^n})$ is a distinct totally ergodic equilibrium state of $\Psi$.
\end{theorem}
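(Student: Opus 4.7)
The plan is to produce the decomposition via the standard $L^2$-spectral theory of the rotational factor of an ergodic measure and then to check by direct calculation that the recoded push-forwards satisfy the equilibrium equation for $\Psi$; the bound on $n$ is the main delicate ingredient. Since $\mu$ is ergodic but not totally ergodic, the Koopman operator of $\sigma$ on $L^2(\Sigma_N,\mu)$ admits a nontrivial root-of-unity eigenvalue; taking $n \geq 2$ to be the largest order of such an eigenvalue (the dimension bound below is what ensures that this maximum is finite), standard ergodic theory yields a $\mu$-measurable partition $\Sigma_N = X_0 \sqcup \cdots \sqcup X_{n-1}$ with $\mu(X_i)=1/n$ and $\sigma(X_i) = X_{(i+1)\bmod n}$. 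Setting $\nu := n\mu|_{X_0}$ one has $\mu = \frac{1}{n}\sum_{i=0}^{n-1}\sigma^i_*\nu$, and $\nu$ is $\sigma^n$-invariant and $\sigma^n$-ergodic. The $n$ push-forwards $(\iota\circ \sigma^i)_*\nu \in \mathcal{M}_\sigma(\Sigma_{N^n})$ are distinct because $\iota$ is a homeomorphism and the measures $\sigma^i_*\nu$ are the distinct ergodic components of $\mu$ under $\sigma^n$.

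Next I would verify that each $\tilde\nu_i := (\iota\circ \sigma^i)_*\nu$ is an equilibrium state of $\Psi$. Directly from the definition $B_i^{(j)} = A^{(j)}_{\eta^{-1}(i)}$ one reads off the pointwise identity $\Psi(\iota(x)|_m) = \Phi(x|_{nm})$ for every $x \in \Sigma_N$ and every $m \geq 1$. Applying this together with Kingman's subadditive ergodic theorem (whose $\mu$-a.e. limit transfers to $\nu \ll \mu$) gives $\Lambda(\Psi,\tilde\nu_i) = n\Lambda(\Phi,\mu)$; an analogous manipulation of the defining sums, using the bijection $\eta$ between length-$m$ words over $\{1,\ldots,N^n\}$ and length-$nm$ words over $\{1,\ldots,N\}$, yields $P_\sigma(\Psi) = nP_\sigma(\Phi)$; and Abramov's formula, combined with the affineness of entropy on the $\sigma^n$-ergodic decomposition and the cyclic conjugacy of the components, gives $h_\sigma(\tilde\nu_i) = h_{\sigma^n}(\nu) = nh_\sigma(\mu)$. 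Summing,
\[h_\sigma(\tilde\nu_i) + \Lambda(\Psi,\tilde\nu_i) = n\bigl(h_\sigma(\mu) + \Lambda(\Phi,\mu)\bigr) = nP_\sigma(\Phi) = P_\sigma(\Psi),\]
so each $\tilde\nu_i$ is an equilibrium state of $\Psi$.

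Total ergodicity of each $\tilde\nu_i$ then follows from the maximality of $n$: if some $\tilde\nu_i$ were not totally ergodic, the same decomposition procedure applied to it would produce a further cyclic decomposition of order $n' \geq 2$ under a suitable power of $\sigma$ on $\Sigma_{N^n}$, which pulls back via $\iota$ to a root-of-unity eigenvalue of the Koopman operator of $\sigma$ on $L^2(\Sigma_N,\mu)$ of order strictly greater than $n$, contradicting the choice of $n$.

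The main obstacle is the a priori finiteness and upper bound $n\leq\prod_{j=1}^k\dim V_j$; equivalently, one must control the torsion subgroup of the eigenvalue group of the Koopman operator on $L^2(\Sigma_N,\mu)$. To establish this I would rely on the transfer-operator and algebraic-hull machinery for generalised matrix equilibrium states developed in the earlier sections of the paper and in \cite{BoMo18,MoSe19a}: such an equilibrium state is characterised as a distinguished Perron--Frobenius-type eigendatum of a Ruelle operator constructed from the tuples $(A_1^{(j)},\ldots,A_N^{(j)})$ and acting on a finite-dimensional space whose ambient dimension is at most $\prod_{j=1}^k \dim V_j$ by virtue of the tensor-product structure forced by the product form of $\Phi$. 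A rotational factor of order $n$ in $\mu$ corresponds to a primitive $n$-th root of unity in the peripheral spectrum of this operator, and the standard Perron--Frobenius bound on the order of the peripheral cyclic group then yields $n \leq \prod_{j=1}^k \dim V_j$.
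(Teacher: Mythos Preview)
Your construction of the cyclic decomposition, the verification that each $(\iota\circ\sigma^i)_*\nu$ is an equilibrium state of $\Psi$ via the identities $\Psi(\iota(x)|_m)=\Phi(x|_{nm})$, $P(\Psi)=nP(\Phi)$ and $h_\sigma(\tilde\nu_i)=nh_\sigma(\mu)$, and the deduction of total ergodicity from maximality of $n$, are all correct and essentially coincide with the paper's argument (the paper does the equilibrium calculation for $\iota_*\mu$ first and then passes to the components by affineness, but this is a cosmetic difference).

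The genuine gap is your justification of the bound $n\le\prod_{j=1}^k\dim V_j$. You invoke a Ruelle-type transfer operator acting on a finite-dimensional space of dimension at most $\prod_j\dim V_j$ and then appeal to a Perron--Frobenius bound on the peripheral cyclic group. No such finite-dimensional operator is constructed in \cite{BoMo18}, \cite{MoSe19a}, or in the present paper; the transfer operators that do appear in this subject act on infinite-dimensional function spaces, and the finiteness phenomena in \cite{BoMo18} come from the combinatorics of subspace classes, not from a finite-dimensional spectral picture of the kind you describe. As written, this step is not substantiated.

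The paper bypasses spectral theory entirely here and obtains the bound by a much more elementary route that is already implicit in your own argument. You have produced $n$ pairwise mutually singular measures $(\iota\circ\sigma^i)_*\nu$ which you have shown are equilibrium states of $\Psi$. Theorem~\ref{th:bomo2} (from \cite{BoMo18}) asserts that $\Psi$ has at most $\prod_{j=1}^k\dim V_j$ ergodic equilibrium states; since every equilibrium state is a convex combination of these, any family of pairwise mutually singular equilibrium states has cardinality at most $\prod_{j=1}^k\dim V_j$. Hence $n\le\prod_{j=1}^k\dim V_j$. This also resolves the mild circularity you flagged: one first proves that \emph{every} $n$ admitting such a cyclic decomposition satisfies this bound, and only then selects the largest such $n$.
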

The proof of Theorem \ref{th:recoding} is technically straightforward but involves a substantial volume of checking and is given in \S\ref{se:bananas} below. Since each measure $(\iota \circ \sigma^i)_*\nu$ is a totally ergodic equilibrium state of $\Psi$, by Theorem \ref{th:main} its natural extension is measurably isomorphic to a Bernoulli process. It is not difficult to deduce:
\begin{corollary}\label{co:vid19}
Let $k \geq 1$ and $N \geq 2$. For each $j=1,\ldots,k$ let $V_j$ be a finite-dimensional real vector space and let $(A_1^{(j)},\ldots,A_N^{(j)}) \in \GL(V_j)^N$ and $\beta_j>0$. For all $\iii \in \Sigma_N^*$ define
\[\Phi(\iii)=\prod_{j=1}^k \left\|A_\iii^{(j)}\right\|^{\beta_j}\]
and let $\mu$ be an ergodic equilibrium state of $\Phi$. Then there exists an integer $n$ satisfying $1 \leq n \leq\prod_{j=1}^k \dim V_j$ such that the natural extension of $\mu$ is measurably isomorphic to the product of a Bernoulli process with the rotation map $m \mapsto m+1 \mod n$ on $\mathbb{Z}_n$.
\end{corollary}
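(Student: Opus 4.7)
The plan is to reduce the general ergodic case to the totally ergodic case via Theorem \ref{th:recoding}, then apply Theorem \ref{th:main} together with Ornstein's isomorphism theorem for Bernoulli shifts.

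If $\mu$ is totally ergodic, Theorem \ref{th:main} asserts directly that $\hat\mu$ is measurably isomorphic to a Bernoulli shift, and the conclusion follows trivially with $n=1$. So assume instead that $\mu$ is ergodic but not totally ergodic. Then Theorem \ref{th:recoding} supplies an integer $n>1$ with $n\leq\prod_{j=1}^k\dim V_j$, a $\sigma^n$-invariant measure $\nu$, and a homeomorphism $\iota\colon\Sigma_N\to\Sigma_{N^n}$ intertwining $\sigma^n$ with $\sigma$ such that $\mu=\frac{1}{n}\sum_{i=0}^{n-1}\sigma^i_*\nu$ and such that $\iota_*\nu$ is a totally ergodic equilibrium state of $\Psi$ on $\Sigma_{N^n}$. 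Applying Theorem \ref{th:main} to $\iota_*\nu$, its natural extension is measurably isomorphic to some Bernoulli shift $(\hat Y,\hat S,\hat\eta)$; transferring through $\iota$ identifies this with the natural extension of the $\sigma^n$-system $(\Sigma_N,\nu)$.

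Next I would represent $(\hat\Sigma_N,\hat\sigma,\hat\mu)$ as a cyclic tower over $(\hat Y,\hat S,\hat\eta)$. The $n$ distinct $\hat\sigma^n$-ergodic components of $\hat\mu$ project under the natural map $\pi$ to the measures $\sigma^i_*\nu$ and are cyclically permuted by $\hat\sigma$; each is isomorphic as a $\hat\sigma^n$-system to $(\hat Y,\hat S,\hat\eta)$. Using iterates of $\hat\sigma$ to coherently identify the components yields a measurable isomorphism $(\hat\Sigma_N,\hat\sigma,\hat\mu)\cong(\hat Y\times\mathbb{Z}_n,T_n,\hat\eta\times\mathrm{Haar})$, where the tower map $T_n$ satisfies $T_n(y,i)=(y,i+1)$ for $i<n-1$ and $T_n(y,n-1)=(\hat S y,0)$.

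The principal obstacle is to recognise this cyclic tower as a genuine direct product of a Bernoulli shift with the rotation $R\colon m\mapsto m+1\pmod n$ on $\mathbb{Z}_n$. Entropy considerations force the target Bernoulli factor $(\hat Y',\hat S',\hat\eta')$ to have entropy $h(\hat S)/n=h(\mu)$, so that $(\hat S')^n$ is a Bernoulli shift of entropy $h(\hat S)$; Ornstein's isomorphism theorem then furnishes a measure-preserving isomorphism $\Phi\colon(\hat Y',(\hat S')^n,\hat\eta')\to(\hat Y,\hat S,\hat\eta)$. The formula $\phi(y',i):=(\Phi((\hat S')^{-i}y'),i)$ then gives a measurable isomorphism from $(\hat Y'\times\mathbb{Z}_n,\hat S'\times R,\hat\eta'\times\mathrm{Haar})$ onto the tower, completing the proof.
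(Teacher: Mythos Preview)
Your argument is correct and follows essentially the same route as the paper's proof: reduce via Theorem \ref{th:recoding}, apply Theorem \ref{th:main} to the base, and then use Ornstein's theorem plus the cyclic structure to build the explicit isomorphism. The only difference is packaging: the paper chooses from the outset a Bernoulli measure $\mathbb{P}$ on $\hat\Sigma_N$ for $\hat\sigma$ with entropy $h(\hat\mu)$, so that the required Bernoulli $n^{\text{th}}$ root is $\hat\sigma$ itself on $(\hat\Sigma_N,\mathbb{P})$, and then writes the isomorphism $\psi(x)=(\hat\sigma^i\phi(\hat\sigma^{-i}x),i)$ for $x\in\hat\sigma^iZ$ directly to the product $(\hat\Sigma_N\times\mathbb{Z}_n,\hat\sigma\times R)$, bypassing your intermediate tower. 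Your detour through the abstract Bernoulli $(\hat Y,\hat S)$ and the separate construction of a root $\hat S'$ is a slight lengthening but not a different idea.
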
 
The proof of this corollary is likewise presented in \S\ref{se:bananas}. We also note the following:
\begin{corollary}\label{co:xswain}
Let $k \geq 1$ and $N \geq 2$. For each $j=1,\ldots,k$ let $V_j$ be a finite-dimensional real vector space and let $(A_1^{(j)},\ldots,A_N^{(j)}) \in \GL(V_j)^N$ and $\beta_j>0$. For all $\iii \in \Sigma_N^*$ define
\[\Phi(\iii)=\prod_{j=1}^k \left\|A_\iii^{(j)}\right\|^{\beta_j}.\]
If every $(A_1^{(j)},\ldots,A_N^{(j)})$ is strongly irreducible then there is a unique equilibrium state for $\Phi$ and that equilibrium state is $\psi$-mixing and has the Bernoulli property. \end{corollary}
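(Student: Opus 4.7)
The plan is to combine Theorems \ref{th:main} and \ref{th:recoding} with an existing uniqueness result for equilibrium states under strong irreducibility. Specifically, I would invoke \cite{BoMo18}, whose upper bound on the number of distinct ergodic equilibrium states of a potential of the form \eqref{eq:stoomy-brown} specialises to uniqueness whenever each tuple $(A_1^{(j)},\ldots,A_N^{(j)})$ is strongly irreducible. Call the resulting unique equilibrium state $\mu$; it is automatically ergodic, and once I show it is totally ergodic, Theorem \ref{th:main} will yield both the $\psi$-mixing conclusion and the Bernoulli property of the natural extension.

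For total ergodicity I argue by contradiction. Suppose $\mu$ is not totally ergodic. Theorem \ref{th:recoding} then produces an integer $n \geq 2$, a recoding $\iota$, tuples $(B_1^{(j)},\ldots,B_{N^n}^{(j)})$ with each $B_i^{(j)} = A_{\eta^{-1}(i)}^{(j)}$ a length-$n$ product of the original matrices, and a recoded potential $\Psi$ of the same form as $\Phi$ that admits $n$ distinct ergodic equilibrium states. If I can show that each tuple $(B_1^{(j)},\ldots,B_{N^n}^{(j)})$ is itself strongly irreducible, then applying the uniqueness result from \cite{BoMo18} to $\Psi$ forces $n=1$, a contradiction.

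The main technical point is thus preservation of strong irreducibility under passage to length-$n$ products. Fix $j$ and, for contradiction, suppose a finite collection $\{U_1,\ldots,U_m\}$ of nonzero proper subspaces of $V_j$ is permuted by every $A_\iii^{(j)}$ with $|\iii|=n$. Form the enlarged family
\[\mathcal{C}:=\bigl\{A_\jjj^{(j)} U_k \colon 0 \leq |\jjj| \leq n-1,\ 1 \leq k \leq m\bigr\},\]
a finite set of nonzero proper subspaces of $V_j$, since every $A_\jjj^{(j)}$ is invertible. For any generator $A_i^{(j)}$ and any element $A_\jjj^{(j)} U_k \in \mathcal{C}$, the subspace $A_i^{(j)} A_\jjj^{(j)} U_k = A_{i\jjj}^{(j)} U_k$ lies in $\mathcal{C}$ directly if $|\jjj| \leq n-2$, and equals some $U_l \in \mathcal{C}$ by the assumed permutation property if $|\jjj|=n-1$. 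Hence $A_i^{(j)}$ sends $\mathcal{C}$ into itself; being invertible it acts injectively on the set of distinct subspaces in $\mathcal{C}$, and therefore induces a permutation of that finite set, contradicting the strong irreducibility of $(A_1^{(j)},\ldots,A_N^{(j)})$.

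The main obstacle in the plan is to confirm that the uniqueness statement extracted from \cite{BoMo18} is indeed valid under the strong irreducibility hypothesis of the corollary; the subsequent combinatorial check for the length-$n$ products is routine, and everything else follows by direct application of Theorems \ref{th:main} and \ref{th:recoding}.
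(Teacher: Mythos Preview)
Your proposal is correct and follows essentially the same route as the paper's proof: invoke \cite{BoMo18} for uniqueness under strong irreducibility, use Theorem \ref{th:recoding} to recode a hypothetical non--totally-ergodic equilibrium state into $n\geq 2$ distinct equilibrium states of $\Psi$, observe that the recoded tuples inherit strong irreducibility so that $\Psi$ also has a unique equilibrium state, derive a contradiction, and then apply Theorem \ref{th:main}. The only difference is that the paper dismisses the preservation of strong irreducibility under recoding as ``easy to see by construction,'' whereas you supply the explicit (and correct) argument via the enlarged family $\mathcal{C}$.
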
 
\begin{proof}
By  \cite[Corollary 2.2]{BoMo18} if every $(A_1^{(j)},\ldots,A_N^{(j)})$ is strongly irreducible then $\Phi$ must have a unique equilibrium state $\mu$. If $\mu$ is not totally ergodic, let $n>1$, $\Psi$ and $(B_1^{(j)},\ldots,B_{N^n}^{(j)})$ be as given by Theorem \ref{th:recoding}. It is easy to see that the tuples $(B_1^{(j)},\ldots,B_{N^n}^{(j)})$ must also strongly irreducible by construction, so by the same reasoning $\Psi$ has a unique equilibrium state. But Theorem \ref{th:recoding} implies that $\Psi$ has at least $n$ distinct ergodic equilibrium states, which is a contradiction. We conclude that $\mu$ must be totally ergodic, so Theorem \ref{th:main} applies and $\mu$ is $\psi$-mixing and has the Bernoulli property.
\end{proof}

In the case of matrix equilibrium states total ergodicity has already been fully characterised in the following sense. If $V$ is a finite-dimensional real vector space and $(A_1,\ldots,A_N) \in \GL(V)^N$ is irreducible then for each $\beta>0$ there exists a unique matrix equilibrium state for the potential $\Phi(\iii):=\|A_\iii\|^\beta$, see for example \cite{FeKa11}. (Moreover, every ergodic matrix equilibrium state is the unique equilibrium state of such a potential.) In this situation it was shown in \cite{Mo19a} that if this matrix equilibrium state is \emph{not} totally ergodic then there necessarily exists a \emph{cyclic splitting} for $V$: we may write $V=\bigoplus_{j=1}^{m} U_j$ where each $U_j$ is a linear subspace of $V$ and where $A_iU_j = U_{j+1 \mod m}$ for all $i=1,\ldots,N$ and $j=1,\ldots,m$. (Examples in which total ergodicity of a matrix equilibrium state fails had already been constructed in \cite{Mo18a}.) It is natural to ask whether this result extends to generalised matrix equilibrium states: if a generalised matrix equilibrium state as in Theorem \ref{th:main} is not totally ergodic, is it the case that one of the tuples $(A_1^{(j)},\ldots,A_N^{(j)}) \in \GL(V_j)^N$ preserves a cyclic splitting for the associated vector space $V_j$? For generalised matrix equilibrium states the situation seems to be more complicated than this, and we are able to show that this result does not hold. We give the following example in which total ergodicity fails but the matrix tuples do not admit cyclic splittings:
\begin{proposition}\label{pr:not-tot}
Define two irreducible pairs of linear maps $(A_1,A_2), (B_1,B_2)\in \GL_2(\mathbb{R})^2$ by
\[A_1:=\begin{pmatrix}2&0\\0&1\end{pmatrix} \qquad A_2:= \begin{pmatrix}0&1\\1&0\end{pmatrix},\]
\[B_1:=\begin{pmatrix}0&1\\1&0\end{pmatrix} \qquad B_2:= \begin{pmatrix}1&0\\0&2\end{pmatrix}\]
and let $\beta_1,\beta_2>0$ be arbitrary. Define a potential $\Phi \colon \Sigma_2^* \to (0,+\infty)$ by $\Phi(\iii):=\|A_\iii\|^{\beta_1}\|B_\iii\|^{\beta_2}$. Then $\Phi$ has a unique equilibrium state and this equilibrium state is not totally ergodic.
\end{proposition}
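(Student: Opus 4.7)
My plan is to prove uniqueness and failure of total ergodicity in succession, after first extracting the algebraic structure of the example. Direct computation gives $A_2 A_1 A_2 = B_2$ and $A_2 A_2 A_2 = A_2 = B_1$, so $B_i = A_2 A_{3-i} A_2$ for $i \in \{1,2\}$. Since $A_2$ is orthogonal and $A_2^2 = I$, this yields $\|B_\iii\| = \|A_{\tau(\iii)}\|$ for every $\iii \in \Sigma_2^*$, where $\tau$ is the involution of $\Sigma_2^*$ swapping the letters $1$ and $2$; consequently $\Phi(\iii) = \|A_\iii\|^{\beta_1}\|A_{\tau(\iii)}\|^{\beta_2}$. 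Both tuples preserve the common pair of lines $\{\mathbb{R} e_1, \mathbb{R} e_2\} \subset \mathbb{P}^1$, and yet by inspection neither admits a cyclic splitting in the sense of \cite{Mo19a}, so the failure of total ergodicity cannot come from the mechanism available in the $k=1$ case.

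For uniqueness I would show that $\Phi$ is quasimultiplicative. Each of $(A_1, A_2)$ and $(B_1, B_2)$ is irreducible in $\mathbb{R}^2$, so by a standard argument each of the individual factors $\|A_\iii\|^{\beta_1}$ and $\|B_\iii\|^{\beta_2}$ is quasimultiplicative, with the connecting word taken from $\{\varepsilon, 2\}$ for the $A$-cocycle and from $\{\varepsilon, 1\}$ for the $B$-cocycle according to whether the dominant singular direction of the current product agrees with that of the next. For the product $\Phi$ a short case analysis over the four possibilities of match/mismatch for the two cocycles shows that a suitable $\kkk \in \{\varepsilon, 1, 2, 12\}$ works simultaneously for both. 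Quasimultiplicativity of $\Phi$ then yields uniqueness of the equilibrium state by the standard equivalence (compare \cite[Corollary~2.2]{BoMo18}).

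The heart of the argument is the failure of total ergodicity, which I would deduce from a period-two structure in the transfer operator associated with $\Phi$. The key point is that $A_1$ preserves each of the lines $\mathbb{R} e_1, \mathbb{R} e_2$ while $A_2$ swaps them, and dually $B_1$ swaps them while $B_2$ preserves them. Hence symbol~$1$ preserves the dominant singular direction of the $A$-cocycle but swaps that of the $B$-cocycle, while symbol~$2$ does the opposite; in either case exactly one of the two phases $(v_A, v_B) \in \{e_1, e_2\}^2$ flips. The associated transfer operator
\[\mathcal{L} f(v_A, v_B) := \sum_{i=1}^{2} \|A_i v_A\|^{\beta_1}\|B_i v_B\|^{\beta_2}\, f\bigl(A_i v_A,\, B_i v_B\bigr),\]
viewed as a $4\times 4$ matrix on the invariant $4$-element set $\{e_1, e_2\}^2$, is therefore block anti-diagonal with respect to the partition into the parity cosets $\{(e_1, e_1), (e_2, e_2)\}$ and $\{(e_1, e_2), (e_2, e_1)\}$. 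Its spectrum is symmetric under negation and the leading eigenvalues are $\pm \rho$ with $\rho = e^{P(\Phi)}$. Through the Gibbs description of $\mu$ associated with $\mathcal{L}$, the $-\rho$-eigenvector furnishes an eigenfunction of $U_\sigma$ on $L^2(\mu)$ with eigenvalue $-1$, namely the one recording the parity of the current state, so $\mu$ is not $\sigma^2$-ergodic.

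The main obstacle is the last step: one must set up the Gibbs-theoretic correspondence between $\mathcal{L}$ and $\mu$ rigorously in order to transfer the $-\rho$ eigenvalue of $\mathcal{L}$ into the $-1$ eigenvalue of $U_\sigma$. The relevant thermodynamic formalism is close in spirit to that of \cite{FeKa11,BoMo18,Pi20}, but it has to be adapted to the fact that our tuples are only irreducible rather than strongly irreducible, so that the invariant subsets of $\mathbb{P}^1$ are proper (in fact two-point) sets; I expect this to be the most technically delicate part of the argument.
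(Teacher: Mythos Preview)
Your proposal and the paper's proof share the same structural observation---every symbol flips exactly one of the two coordinate phases, so the action on the four-point set $\{\overline{e_1},\overline{e_2}\}^2$ has period two---but the two arguments diverge in how this is exploited.

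For uniqueness, your direct quasimultiplicativity argument is correct and arguably cleaner than the paper's route. Your case analysis with $\kkk\in\{\varepsilon,1,2,12\}$ is exactly right: symbol $1$ fixes the $A$-phase and swaps the $B$-phase, symbol $2$ does the opposite, and $12$ swaps both, so one of these four words realises any desired pair of swaps. The paper instead invokes Theorem~\ref{th:bomo}: since both tuples are irreducible and the only one-dimensional subspaces with finite orbit are the coordinate axes, the four-element set $\mathcal{W}_0=\{\overline{e_1},\overline{e_2}\}^2$ is the unique transitive subspace class, and uniqueness of the equilibrium state follows from Theorem~\ref{th:bomo}(i).

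For the failure of total ergodicity the approaches differ more substantially. The paper avoids your transfer-operator step entirely. It recodes to $\Sigma_4$ via $\iota$, observes that in the two-step system the parity classes $\mathcal{W}_1=\{(\overline{e_1},\overline{e_1}),(\overline{e_2},\overline{e_2})\}$ and $\mathcal{W}_2=\{(\overline{e_1},\overline{e_2}),(\overline{e_2},\overline{e_1})\}$ are each transitive, so by Theorem~\ref{th:bomo} the only possible ergodic equilibrium states of the recoded potential $\hat\Phi$ are the corresponding $\mu_1,\mu_2$. A direct computation with the words $1^n4^n$ and $31^{n-1}24^{n-1}$ in $\Sigma_4^*$ shows that $\hat\Phi/\hat\Phi_{\mathcal{W}_i}$ is unbounded for each $i$, which via the Gibbs inequality of Theorem~\ref{th:bomo}(i) forces $\iota_*\mu\neq\mu_1,\mu_2$. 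Hence $\iota_*\mu$ is not ergodic and $\mu$ is not $\sigma^2$-ergodic.

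Your spectral route is sound in principle: the block anti-diagonal structure of $\mathcal{L}$ does force eigenvalues $\pm\rho$, and a $-1$-eigenfunction of $U_\sigma$ would finish the job. The obstacle you flag is genuine, though. To pass from the $-\rho$-eigenvector of $\mathcal{L}$ to a $-1$-eigenfunction on $L^2(\mu)$ you effectively need a measurable assignment $x\mapsto(\mathsf{U}_1(x),\mathsf{U}_2(x))\in\mathcal{W}_0$ satisfying the equivariance $\mathsf{U}_j(\hat\sigma^{-1}x)=A_{x_0}^{(j)}\mathsf{U}_j(x)$; the parity of this assignment would then be the desired eigenfunction. Such an assignment is precisely what the proof of Theorem~\ref{th:prox-unique} constructs, but that construction uses simplicity of the top Lyapunov exponent, which you have not verified here. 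The paper's Gibbs-inequality argument sidesteps this entirely: it needs no Lyapunov-exponent information and no operator theory, only Theorem~\ref{th:bomo} and two explicit families of words. What your approach buys, if the gap is closed, is a conceptual explanation (the $-1$ in the Koopman spectrum comes from the period-two combinatorics of $\mathcal{W}_0$); what the paper's approach buys is a short, self-contained proof with no analytic prerequisites beyond the Gibbs property.
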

The proof of Proposition \ref{pr:not-tot} is also given in \S\ref{se:bananas}.

\subsection{Connections with self-affine sets}\label{ss:kaenmaki}

We now describe in more detail the connections between Theorem \ref{th:main} and self-affine sets. If $T_1,\ldots,T_N \colon \mathbb{R}^d \to \mathbb{R}^d$ are invertible affine contractions (with respect to some fixed norm on $\mathbb{R}^d$ which need not be the Euclidean norm) then there exists a unique nonempty compact set $X \subset \mathbb{R}^d$ satisfying $X=\bigcup_{i=1}^N T_iX$. Such sets $X$ are referred to as \emph{self-affine sets}. In the situation where the images $T_1X,\ldots,T_NX$ are pairwise disjoint it is not difficult to define an expanding map $f \colon \mathbb{R}^d \to \mathbb{R}^d$ such that $X$ is a repelling set for $f$ and such that $D_xf = T_i^{-1}$ whenever $x \in T_iX$, so self-affine sets with this disjointness property (which is called the strong separation condition in the fractal geometry literature) are a particular case of the expanding repellers discussed in \S\ref{se:one}. Besides their connection with questions such as Conjecture \ref{qu:eensparkrangers} self-affine sets are the subject of a deep and substantial literature in their own right, beginning in the 1980s with such works as \cite{Be84,Fa88,Mc84} and flowering into a highly active contemporary research topic (see for example \cite{Ba07,BaHoRa19,BaFe13,BoMo18,DaSi17,Fe19,FeSh14,Fr12,JoPoSi07}). As well as in its connection to Conjecture \ref{qu:eensparkrangers} the construction of high-dimensional measures on self-affine sets is important to the problem of obtaining sharp lower bounds on the Hausdorff dimension of the set itself. Theorem \ref{th:main} in particular has implications for the structure of certain high-dimensional measures on self-affine sets, called \emph{K\"aenm\"aki measures}, which we now describe.

If $V$ is a $d$-dimensional vector space equipped with an inner product, we recall that the \emph{singular values} of $A \in \GL(V)$ are defined to be the positive square roots of the eigenvalues of the positive definite linear map $A^\top A$. We write the singular values as $\sigma_1(A),\ldots,\sigma_d(A)$ in decreasing order with repetition in the case of multiple eigenvalues. 
For each $s \geq 0$ and $A \in \GL_d(\mathbb{R})$ the \emph{singular value function}, introduced by Falconer in \cite{Fa88}, is the function $\varphi^s \colon \GL_d(\mathbb{R})\to \mathbb{R}$ defined by
\[\varphi^s(A):=\left\{\begin{array}{cl} \sigma_1(A)\cdots \sigma_{\lfloor s\rfloor}(A) \sigma_{\lceil s\rceil}(A)^{s-\lfloor s\rfloor}&\text{if $0\leq s\leq d$,}\\\left|\det A\right|^{\frac{s}{d}}&\text{if $s \geq d$,}\end{array}\right.\]
where $\varphi^0(A)$ is understood to equal $1$. The singular value function satisfies $\varphi^s(AB) \leq \varphi^s(A)\varphi^s(B)$ for all $A,B \in \GL_d(\mathbb{R})$. If $T_1,\ldots,T_N \colon \mathbb{R}^d \to \mathbb{R}^d$ are invertible affine contractions with respect to some fixed norm on $\mathbb{R}^d$, let us write each $T_i$ in the form $T_i(u):=A_iu + v_i$ for all $u \in \mathbb{R}^d$, where $A_i \in \GL(\mathbb{R}^d)$ and $v_i \in \mathbb{R}^d$ for each $i=1,\ldots,N$.

For each $s \geq 0$ we say that a $\varphi^s$-equilibrium state for $(T_1,\ldots,T_N)$ is an equilibrium state of the submultiplicative potential $\Phi^s(\iii):=\varphi^s(A_\iii)$. It is not particularly difficult to show that the function $s \mapsto P(\Phi^s)$ is continuous and strictly decreasing with $P(\Phi^0)>0$ and $\lim_{s \to \infty}P(\Phi^s)=-\infty$, so in particular there exists a unique $s >0$ such that $P(\Phi^s)=0$, called the \emph{affinity dimension} of $(T_1,\ldots,T_N)$.  By definition a \emph{K\"aenm\"aki measure} for $(T_1,\ldots,T_N)$ is a $\varphi^s$-equilibrium state for $(T_1,\ldots,T_N)$ where $s$ is the affinity dimension. Crucially every K\"aenm\"aki measure is a generalised matrix equilibrium state, since we have
\[\Phi^s(\iii)= \left\{\begin{array}{cl}\left\|A_\iii^{\wedge \lfloor s\rfloor}\right\|^{1+\lfloor s\rfloor -s} \left\|A_\iii^{\wedge \lceil s\rceil}\right\|^{s-\lfloor s\rfloor}&\text{if $0 \leq s\leq d$,}\\
\left|\det A_\iii\right|^{\frac{s}{d}}&\text{if $s \geq d$,}
\end{array}\right.\]
where $A^{\wedge k}$ denotes the $k^{\mathrm{th}}$ exterior power of the linear map $A$; for details see the following section. (Here $A^{\wedge 0}$ is always understood to equal the identity linear map on $\mathbb{R}$.) It is not difficult to show that there exists a well-defined continuous function $\Pi \colon \Sigma_N \to \mathbb{R}^d$ which satisfies
\[\Pi\left[(x_k)_{k=1}^\infty\right]=\lim_{n \to \infty} T_{x_1}T_{x_2}\cdots T_{x_n}v\]
for all $v \in \mathbb{R}^d$, and indeed the image $\Pi(\Sigma_N)$ is precisely the attractor of $(T_1,\ldots,T_N)$. (It is for this reason that in this article we multiply matrices on the right -- we define $A_\iii:=A_{i_1}\cdots A_{i_n}$ and not $A_\iii:=A_{i_n}\cdots A_{i_1}$ -- and not on the left as is more natural in many other contexts.) It follows from a result of Jordan, Pollicott and Simon (\cite{JoPoSi07}, see also \cite{KaRe14}) that if a shift-invariant measure $\mu$ on $\Sigma_N$ has the property that $\Pi_*\mu$ has Hausdorff dimension equal to the affinity dimension then it is necessarily a K\"aenm\"aki measure for $(T_1,\ldots,T_N)$, and in this sense K\"aenm\"aki measures are the natural candidates for the measures of maximal dimension on self-affine sets.

Theorem \ref{th:main} and Corollary \ref{co:xswain} together yield the following result for K\"aenm\"aki measures:
\begin{corollary}\label{co:xsapples}
Let $T_1,\ldots,T_N \colon \mathbb{R}^d \to \mathbb{R}^d$ be invertible affine maps which are all contracting with respect to some fixed norm on $\mathbb{R}^d$ and let $s>0$ denote the affinity dimension of $(T_1,\ldots,T_N)$. If $\mu$ is a totally ergodic K\"aenm\"aki measure for $(T_1,\ldots,T_N)$ then 
\[\lim_{n \to \infty} \sup_{\iii,\jjj \in \Sigma_N^*} \left|\frac{\mu([\iii] \cap \sigma^{-n-|\iii|} [\jjj])}{\mu([\iii])\mu([\jjj])} -1\right|=0\]
and the natural extension of $\mu$ is measurably isomorphic to a Bernoulli measure. This holds in particular if the tuples $(A_1^{\wedge \lfloor s\rfloor},\ldots,A_N^{\wedge \lfloor s\rfloor})$ and $(A_1^{\wedge \lceil s\rceil},\ldots,A_N^{\wedge \lceil s\rceil})$ are both strongly irreducible.
\end{corollary}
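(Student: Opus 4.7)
The plan is to observe that every K\"aenm\"aki measure is a generalised matrix equilibrium state, whereupon the two assertions of the corollary become direct applications of Theorem \ref{th:main} and Corollary \ref{co:xswain} respectively.

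First I would verify that the singular value potential $\Phi^s(\iii):=\varphi^s(A_\iii)$ fits the form \eqref{eq:stoomy-brown}. For $s\in(0,d)$ non-integer the factorisation
\[\Phi^s(\iii)=\|A_\iii^{\wedge \lfloor s\rfloor}\|^{1+\lfloor s\rfloor-s}\|A_\iii^{\wedge\lceil s\rceil}\|^{s-\lfloor s\rfloor}\]
already presents $\Phi^s$ as a generalised matrix equilibrium state potential with $k=2$, vector spaces $V_1=\bigwedge^{\lfloor s\rfloor}\mathbb{R}^d$ and $V_2=\bigwedge^{\lceil s\rceil}\mathbb{R}^d$, and strictly positive exponents. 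When $s\in(0,d]$ is an integer one of the two exponents vanishes and we obtain a $k=1$ potential $\|A_\iii^{\wedge s}\|$; when $s\geq d$ we instead use $\Phi^s(\iii)=|\det A_\iii|^{s/d}$ and regard each $\det A_i\in\GL_1(\mathbb{R})$ as an invertible $1\times 1$ matrix, producing a $k=1$ potential on $V_1=\mathbb{R}$ with $\beta_1=s/d>0$. In every case $\mu$ is an equilibrium state of a potential of the form \eqref{eq:stoomy-brown} and hence is a generalised matrix equilibrium state in the sense of the main definitions.

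The first assertion of the corollary is now immediate from Theorem \ref{th:main}: if $\mu$ is totally ergodic then $\mu$ is $\psi$-mixing and $\hat\mu$ has the Bernoulli property, so in particular $\hat\mu$ is measurably isomorphic to some Bernoulli shift. For the concluding sufficient condition, note that in the non-degenerate range $0<s\leq d$ every tuple $(A^{(j)}_1,\ldots,A^{(j)}_N)$ appearing in the representation of $\Phi^s$ above coincides with one of $(A_1^{\wedge \lfloor s\rfloor},\ldots,A_N^{\wedge \lfloor s\rfloor})$ or $(A_1^{\wedge \lceil s\rceil},\ldots,A_N^{\wedge \lceil s\rceil})$. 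Strong irreducibility of both exterior power tuples therefore supplies the hypothesis of Corollary \ref{co:xswain}, which at once furnishes a unique equilibrium state that is $\psi$-mixing and Bernoulli; in particular it is totally ergodic, so the first part of the corollary applies to it.

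The argument is essentially a verification and there is no substantial obstacle; the only care required is the minor case analysis at integer values of $s$ and at $s\geq d$, where the generic two-factor formula for $\Phi^s$ degenerates to a single factor but still fits the form \eqref{eq:stoomy-brown}. All of the genuine dynamical content is absorbed into the already-proved Theorem \ref{th:main} and Corollary \ref{co:xswain}.
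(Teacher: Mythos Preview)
Your proposal is correct and matches the paper's own treatment: the corollary is stated there without a separate proof, being introduced simply as an immediate consequence of Theorem~\ref{th:main} and Corollary~\ref{co:xswain} once one observes (as in \S\ref{ss:kaenmaki}) that the singular value potential $\Phi^s$ has the form~\eqref{eq:stoomy-brown}. Your additional case analysis for integer $s$ and $s\geq d$ is a harmless elaboration of this observation.
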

In several works on the dimension theory of K\"aenm\"aki measures it has been possible to obtain stronger results if an additional assumption is made, called the \emph{quasi-Bernoulli property}. A measure $\mu$ on $\Sigma_N$ is called quasi-Bernoulli if there exists a constant $C>0$ such that $C^{-1}\mu([\iii])\mu([\jjj]) \leq \mu([\iii\jjj])\leq C\mu([\iii])\mu([\jjj])$ for all $\iii,\jjj \in \Sigma_N^*$. (In other literatures this property is sometimes called \emph{local product structure}: see for example \cite{BoVi04}). It follows from the results of \cite{BoMo18} that every ergodic generalised matrix equilibrium state satisfies the upper bound $\mu([\iii\jjj])\leq C\mu([\iii])\mu([\jjj])$, but the lower bound does not hold in general (see for example \cite{BaKaMo19}). If $T_1,\ldots,T_N$ are affine contractions of $\mathbb{R}^d$ with respect to some fixed norm, let us say that the \emph{$n$-step recoding} of $(T_1,\ldots,T_N)$ is the $N^n$-tuple $(\hat{T}_1,\ldots,\hat{T}_{N^n}):=(T_1^n,T_1^{n-1}T_2,T_1^{n-1}T_3,\ldots,T_N^{n-1}T_{N-1},T_N^n)$ which lists all compositions of the form $T_{i_1}\cdots T_{i_n}$ in lexicographical order. It is easy to see that if $X=\bigcup_{i=1}^N T_iX$ then $X=\bigcup_{i=1}^{N^n}\hat{T}_iX$, so the tuples $(T_1,\ldots,T_N)$ and $(\hat{T}_1,\ldots,\hat{T}_{N^n})$ describe the same self-affine set. Moreover one may show that the affinity dimensions of $(T_1,\ldots,T_N)$ and $(\hat{T}_1,\ldots,\hat{T}_{N^n})$ are equal. By recoding $(T_1,\ldots,T_N)$ by the integer $n_0$ given by Theorem \ref{th:recoding} we may recode any $(T_1,\ldots,T_N)$ into a new tuple all of whose ergodic K\"aenm\"aki measures are totally ergodic and therefore are $\psi$-mixing. By recoding a second time we may for any prescribed $\varepsilon>0$ guarantee that for every ergodic K\"aenm\"aki measure $\mu$ of the twice-recoded system $(\hat{T}_1,\ldots,\hat{T}_{N^n})$ we have
\[ \sup_{\iii,\jjj \in \Sigma_{N^{n}*}} \left|\frac{\nu([\iii] \cap \sigma^{-1-|\iii|} [\jjj])}{\nu([\iii])\nu([\jjj])} -1\right|<\varepsilon\]
which is to say 
\[(1-\varepsilon)\mu([\iii])\mu([\jjj]) \leq \sum_{\ell=1}^{N^n} \mu([\iii \ell \jjj]) \leq (1+\varepsilon) \mu([\iii])\mu([\jjj]) \]
for all $\iii,\jjj \in \Sigma_{N^n}^*$. It is interesting to ask whether this property may have dimension-theoretic applications similar to those of the quasi-Bernoulli property. 
\subsection{Strategy of proof and structure of the paper}

The fundamental objective in the proof of Theorem \ref{th:main} is to establish, given a totally ergodic generalised matrix equilibrium state $\mu$ on $\Sigma_N$, the following property which we refer to as the \emph{pre-condition for $\psi$-mixing}: there exist an integer $m \geq 1$ and a real number $\delta>0$ depending only on $\mu$ such that 
\begin{equation}\label{eq:clardic-fug}\max_{\substack{\kkk \in \Sigma_N^*\\|\kkk|=m}} \mu([\iii\kkk\jjj])\geq \delta \mu([\iii])\mu([\jjj])\end{equation}
for all $\iii,\jjj \in \Sigma_N^*$. By combining this result with a theorem of R.C. Bradley \cite{Br05} it can easily be deduced that the natural extension $\hat\mu$ is $\psi$-mixing, which implies the same result for $\mu$. A celebrated theorem of N.A. Friedman and D.S. Ornstein \cite{FrOr70} on isomorphism with Bernoulli processes then allows us to pass directly from the $\psi$-mixing property for $\hat\mu$ to the Bernoulli property. This basic strategy for proving $\psi$-mixing and deducing the Bernoulli property follows that used by M. Piraino in \cite{Pi20}.

The route to the condition \eqref{eq:clardic-fug} divides naturally into three principal stages. In the first stage, which is relatively elementary, we show that every ergodic generalised matrix equilibrium state $\mu$ can be represented by a potential $\Phi$ defined in terms of tuples $(A_1^{(j)},\ldots,A_N^{(j)})$ all of which are irreducible and all of which have simple top Lyapunov exponent with respect to $\mu$. This to some extent reprises arguments already used in \cite{BoMo18} but with the additional detail of the top Lyapunov exponent to be considered. In the second stage we use analytic arguments to further show that $\mu$ is the \emph{unique} equilibrium state of a potential of the form $\Phi_{\mathcal{W}}(\iii):=\max_{(W_j)_{j=1}^k \in \mathcal{W}}\prod_{j=1}^k \|A_\iii^{(j)}|_{W_j}\|^{\beta_j}$ where $\mathcal{W}$ is a finite invariant set of tuples $(W_j)_{j=1}^k$ of subspaces $W_j$ of the respective vector space $V_j$, and such that $\mathcal{W}$ has an additional combinatorial property called \emph{primitivity}: this is the stage at which total ergodicity is used.  In the third stage, which is more algebraic, these ingredients are combined to obtain the inequality \eqref{eq:clardic-fug}. We may then deduce Theorem \ref{th:main} from \eqref{eq:clardic-fug} in a fairly straightforward manner. This division into parts is reflected in similar divisions in the proofs of other major results on generalised matrix equilibrium states given in \cite{BoMo18,MoSe19a}: in the first stage of the argument we obtain irreducibility, in the second stage we treat complications arising from the possibility of irreducibility without strong irreducibility, and in the last stage we deal with a reduced case in which the arguments applicable to the strongly irreducible case are available. To illustrate this we remark that in the strongly irreducible case, the arguments in the second stage mostly collapse to trivialities; and in the case where for each $j$ there exists a one-dimensional space with finite orbit under $(A_1^{(j)},\ldots,A_N^{(j)})$, the arguments in the second stage become of fundamental importance whereas those in the third stage become trivial instead.

The remainder of the paper is therefore structured as follows. In the following section we recall various foundational results in linear algebra, ergodic theory and algebraic geometry which will be used in various parts of the proof of Theorem \ref{th:main}. The three stages in the proof of \eqref{eq:clardic-fug} just described are given successively in sections \ref{se:firststage} through \ref{se:thirdstage}. In \S\ref{se:bananas} we combine these results to obtain Theorem \ref{th:main} and also prove the various minor additional results described in this section.




\section{Preliminaries}\label{mango}

\subsection{Linear algebra}\label{sss:la} We first recall some concepts and identities from linear and multilinear algebra which will be used in various sections of this article. Here and throughout the article $\ned(V)$ denotes the vector space of linear endomorphisms of the vector space $V$, and $\rho(A)$ denotes the largest of the absolute values of the eigenvalues of the linear map $A \in \ned(V)$. Proofs of the following statements concerning exterior powers and tensor products may be found in, for example, \cite[\S{XVI}]{MaBi99}; the material on singular values is more commonly found in texts on matrix analysis such as \cite{HoJo94}. 

\subsubsection{Exterior powers} If $V$ is a finite-dimensional real (or complex) vector space of dimension $d$ then for every $k=1,\ldots,d$ there exists a vector space $\wedge^k V$ of dimension ${d \choose k}$, called the $k^{\mathrm{th}}$ exterior power of $V$, which is spanned by all expressions of the form $v_1 \wedge v_2 \wedge \cdots \wedge v_k$ such that $v_1,\ldots,v_k \in V$. These objects are subject to the identities 
\[(\lambda v_1 + u_1) \wedge v_2 \wedge \cdots \wedge v_k  = \lambda (v_1 \wedge v_2 \wedge + \cdots + \wedge v_k) + u_1 \wedge v_2 \wedge + \cdots + \wedge v_k,\]
\[v_1 \wedge \cdots \wedge v_i \wedge v_{i+1} \wedge \cdots \wedge  v_k = -v_1 \wedge \cdots \wedge v_{i+1} \wedge v_{i} \wedge \cdots \wedge  v_k \]
for all $v_1,\ldots,v_k,u_1 \in V$, all $i \in \{1,\ldots,k-1\}$ and all $\lambda$  in $\mathbb{R}$ (or $\mathbb{C}$). If $e_1,\ldots,e_d$ is a basis for $V$ then the vectors $e_{i_1}\wedge \cdots \wedge e_{i_k}$ such that $1 \leq i_1<\cdots<i_k \leq d$ form a basis for $\wedge^kV$. If $A \in \ned(V)$ then  the $k^{\mathrm{th}}$ exterior power of $A$ is the unique linear map $A^{\wedge k} \in \ned(\wedge^k V)$ characterised by the property $A^{\wedge k} (v_1\wedge \cdots \wedge v_k) = Av_1 \wedge \cdots \wedge Av_k$ for every $v_1,\ldots,v_k \in V$. The identity $(AB)^{\wedge k}=A^{\wedge k}B^{\wedge k}$ for all $A,B \in \ned(V)$ is clear. By considering appropriate bases it is not difficult to see that if the eigenvalues of $A$ are $\lambda_1,\ldots,\lambda_d$ then the eigenvalues of $A^{\wedge k}$ are precisely the products $\lambda_{i_1}\wedge \cdots \wedge \lambda_{i_k}$ such that $1 \leq i_1<\cdots<i_k \leq d$. If $V$ is additionally equipped with an inner product $\langle \cdot,\cdot\rangle$ then it induces an inner product on $\wedge^k V$ by $\langle u_1 \wedge \cdots \wedge u_k,v_1 \wedge \cdots \wedge v_k\rangle := \det ([\langle u_i,v_j\rangle]_{i,j=1}^k)$, and with respect to these inner products it is clear that $(A^\top)^{\wedge k}\equiv (A^{\wedge k})^\top$. 

\subsubsection{Tensor products} If $V_1,\ldots,V_k$ are finite-dimensional real (or complex) vector spaces then their tensor product $\bigotimes_{i=1}^k V_i$ is a vector space of dimension $\prod_{i=1}^k \dim V_i$ spanned by all expressions of the form $v_1 \otimes \cdots \otimes v_k$ such that $v_i \in V_i$ for every $i=1,\ldots,k$, subject to the identity
\[v_1 \otimes  \cdots \otimes (\lambda v_i + u)\otimes \cdots \otimes v_k = \lambda(v_1 \otimes  \cdots \otimes  v_i \otimes \cdots \otimes v_k) +  v_1 \otimes \cdots \otimes u \otimes \cdots \otimes v_k\]
for all $v_1 \in V_1$, $v_2 \in V_2$,\ldots,$v_k \in V_k$, all $\lambda$ in $\mathbb{R}$ (or $\mathbb{C}$) and $u \in V_i$, for all $i=1,\ldots,k$. If for each $i=1,\ldots,k$ we are given a basis $e_{1,i},\ldots,e_{d_i,i}$ for $V_i$ then the $\prod_{i=1}^k d_i$ vectors of the form $e_{j_1,1}\otimes e_{j_2,2} \otimes \cdots \otimes e_{j_k,k}$ with $1 \leq j_i \leq d_i$ for each $i=1,\ldots,k$ form a basis for $\bigotimes_{i=1}^k V_i$.  If linear maps $A_1 \in \ned(V_1),\ldots,A_k \in \ned(V_k)$ are given then they induce a linear map $\bigotimes_{i=1}^k A_i$ on $\bigotimes_{i=1}^k V_i$ by $(\bigotimes_{i=1}^k A_i)(u_1\otimes \cdots \otimes u_k):=A_1u_1\otimes \cdots \otimes A_ku_k$. By considering appropriate bases it is not difficult to show that the $\prod_{i=1}^k \dim V_i$ eigenvalues of $\bigotimes_{i=1}^k A_i$ are precisely the products of the form $\prod_{i=1}^k \lambda_i$ where for each $i$ the number $\lambda_i$ is an eigenvalue of $A_i$. In particular we have $\rho(\bigotimes_{i=1}^k A_i)=\prod_{i=1}^k \rho(A_i)$ whenever $A_i \in \ned(A_i)$ for every $i=1,\ldots,k$. If for each $i$ we are given an inner product $\langle \cdot,\cdot\rangle_{V_i}$ on $V_i$ then we may define an inner product on $\bigotimes_{i=1}^k V_i$ by defining $\langle u_1 \otimes \cdots \otimes u_k,v_1 \otimes \cdots \otimes v_k\rangle := \prod_{i=1}^k \langle u_i,v_i\rangle_{V_i}$ for all $v_1 \in V_1$,\ldots,$v_k \in V_k$ and extending linearly. It is not difficult to see that with respect to this inner product we have $(\bigotimes_{i=1}^k A_i)^\top \equiv \bigotimes_{i=1}^k (A_i^\top)$ and the identity $\|\bigotimes_{i=1}^k A_i\| = \prod_{i=1}^k \|A_i\|$ follows.

\subsubsection{Singular values}\label{sss:singular}
If $V$ is a $d$-dimensional real or complex vector space equipped with an inner product, the singular values of a linear map $A \in \ned(V)$ are defined to be the non-negative square roots of the eigenvalues of the positive semidefinite linear map $A^\top A$ listed in decreasing order with repetition in the case of multiple eigenvalues. All vector spaces in this article will be assumed to be equipped with inner products. If $\dim V=d$ we denote the singular values of $A$ by $\sigma_1(A),\ldots,\sigma_d(A)$. The singular values are well known to satisfy the alternative characterisation
\[\sigma_i(A) =\min \{\|A-F\| \colon \rank F <i\}\]
for all $A \in \ned(V)$ and $i=1,\ldots,d$. It may be easily demonstrated using these two descriptions that the singular values satisfy the identities $\prod_{i=1}^d \sigma_i(A)=|\det A|$ and $\|A\|=\sigma_1(A)$ and also satisfy the inequality $\sigma_i(X_1AX_2) \leq \|X_1\| \cdot \sigma_i(A) \cdot \|X_2\|$ for all $A,X_1,X_2 \in \ned(V)$ and $i=1,\ldots,d$. For every $k=1,\ldots,d$ and $A \in \ned(V)$ the singular values of $A^{\wedge k}$ (relative to the inner product on $\wedge^k V$ induced by the inner product on $V$) are the square roots of the eigenvalues of $(A^{\wedge k})^\top A^{\wedge k} = (A^\top A)^{\wedge k}$ and hence are precisely the products $\sigma_{i_1}(A)\cdots \sigma_{i_k}(A)$ such that $1 \leq i_1<\cdots<i_k \leq d$. In particular the largest singular value of $A^{\wedge k}$ is $\sigma_1(A)\cdots \sigma_k(A)$, so we have $\|A^{\wedge k}\|=\sigma_1(A)\cdots \sigma_k(A)$ for all $A \in \ned(V)$ and $k=1,\ldots,d$. The inequality $\prod_{i=1}^k \sigma_i(AB) \leq (\prod_{i=1}^k \sigma_i(A)) (\prod_{i=1}^k \sigma_i(B))$ for all $A,B \in \ned(V)$ and all $k=1,\ldots,d$ follows. 

In general the singular values of $A$ are defined only relative to a specified inner product on $V$ and may change if a different inner product is used. If $\langle \cdot,\cdot\rangle_1$ and $\langle \cdot,\cdot\rangle_2$ are distinct inner products on $V$, $X \in \GL(V)$ is an isometry from $(V,\langle \cdot,\cdot\rangle_1)$ to $(V,\langle \cdot,\cdot\rangle_2)$, and $\sigma_1(A),\ldots,\sigma_d(A)$ denote the singular values of $A \in \GL(V)$ as calculated with respect to $\langle \cdot,\cdot\rangle_1$ then the singular values of $A$ as calculated with respect to $\langle \cdot,\cdot \rangle_2$ are precisely  $\sigma_1(XAX^{-1}),\ldots,\sigma_d(XAX^{-1})$. In particular the ratio between the two values of $\sigma_i(A)$ as calculated according to the two distinct inner products is bounded above by $\|X\|\cdot\|X^{-1}\|$ and below by $\|X\|^{-1}\|X^{-1}\|^{-1}$ for all $A \in \GL(V)$. This will be significant in \S\ref{ss:lyap} below, and in general implies that when we are interested in limits of sequences of the form $\frac{1}{n} \log \sigma_i(A_n)$ for some sequence $(A_n)_{n=1}^\infty$ of elements of $\ned(V)$, the value of the limit will be independent of the choice of inner product on $V$ with respect to which the sequence of terms is calculated.

\subsubsection{Proximality} \label{sss:prox}

Let $V$ be a finite-dimensional real vector space equipped with an inner product.  We will call a linear endomorphism $A \in \ned(V)$ \emph{proximal} if it has a unique eigenvalue of maximum modulus and that eigenvalue is simple. (Note that every linear endomorphism of a one-dimensional space is proximal.) We denote the set of all proximal endomorphisms of $V$ by $\prox(V)$. If $A \in \prox(V)$ then we write $V^+(A)$ for the leading eigenspace of $A$ and $V^-(A)$ for the unique $A$-invariant hyperplane which is complementary to $V^+(A)$. Using the perturbation theory of finite-dimensional linear maps it is not difficult to show that $\prox(V)$ is an open subset of $\ned(V)$ and that the functions $V^+$ and $V^-$ are continuous on it. We also note that $A \in \prox(V)$ if and only if $\lambda A \in \prox(V)$ for every nonzero $\lambda\in \mathbb{R}$, if and only if $A^n \in \prox(V)$ for every $n \geq 1$, and that the identities $V^+(A)=V^+(\lambda A) = V^+(A^n)$ and $V^-(A)=V^-(\lambda A) = V^-(A^n)$ are valid for all $A \in \prox(V)$, all nonzero $\lambda \in \mathbb{R}$ and all positive integers $n$. By considering the Jordan form of $A$ it is not difficult to see that $A \in \prox(V)$ if and only if the limit $P:=\lim_{n \to \infty} \|A^n\|^{-1}A^n$ exists, is not nilpotent, and has rank one. Under these conditions the limit $P$ is clearly also proximal and satisfies $V^+(P)=V^+(A)$ and $V^-(P)=V^-(A)$. We observe that if $A \in \ned(V)$ has rank one then $A \in \prox(V)$ if and only if $A^2 \neq 0$, if and only if $A$ is not nilpotent. We lastly remark that if $A \in \ned(V)$ and $\rho(A)^2 >\sigma_1(A)\sigma_2(A)$ then $A$ is necessarily proximal, since in this case by Gelfand's formula
\[\rho(A^{\wedge 2}) = \lim_{n \to \infty} \left\|\left(A^{\wedge 2}\right)^n\right\|^{\frac{1}{n}} = \inf_{n \geq 1} \left\|\left(A^{\wedge 2}\right)^n\right\|^{\frac{1}{n}}  \leq \left\|A^{\wedge 2}\right\|  =\sigma_1(A)\sigma_2(A)<\rho(A)^2,\]
and since $\rho(A^{\wedge 2})$ is the product of the absolute values of the two largest eigenvalues of $A$ this implies that $A$ has a unique, simple eigenvalue with absolute value $\rho(A)$ as required for $A$ to be proximal.

\subsection{Linear algebraic groups}\label{ss:zariski}

In \S\ref{se:thirdstage} we will need to consider the Zariski topology on the general linear group $\GL(V)$ of invertible linear transformations of a finite-dimensional real vector space $V$. We briefly summarise here, without proofs, the definition and important features of this topology which will be needed later. Proofs of the statements described in this section may be found in standard textbooks on linear algebraic groups such as \cite{Bo91,Hu75}; the introductory treatment of this subject in \cite{BeQu16} may be particularly helpful for readers approaching the subject from a background in ergodic theory.

If $V_1$ and $V_2$ are finite-dimensional real vector spaces then a function $p \colon V_1 \to V_2$ is called a \emph{polynomial} if for some (then for every) choice of basis on $V_1$  and $V_2$, the coefficients of the vector $p(v) \in V_2$ with respect to the basis on $V_2$ are consistent polynomial functions of the coefficients of $v$ with respect to the basis on $V_1$. A subset $Z$ of a finite-dimensional real vector space $V$ is called an \emph{affine variety} if it is the common zero locus of some family of polynomial functions $V \to \mathbb{R}$. In particular $V$ itself is an affine variety. When a (proper) subset of an affine variety is also an affine variety we call it a (proper) subvariety. If $Z_1$ and $Z_2$ are affine varieties which are subvarieties of real vector spaces $V_1$ and $V_2$ then we define a polynomial $Z_1 \to Z_2$ to be a function from $Z_1$ to $Z_2$ which can be realised as the restriction to $Z_1$ of a polynomial $V_1 \to V_2$.

The \emph{Zariski topology} on an affine variety $Z$ is defined to be the topology generated by declaring the affine subvarieties of $Z$ to be the closed sets for the topology. The Zariski topology is much coarser than the standard (Euclidean) topology which $Z$ inherits as a subset of its ambient vector space $V$, having far fewer open sets; in particular, it is not a Hausdorff topology.  An affine variety is called an \emph{irreducible variety} if it cannot be written as the union of a finite collection of proper subvarieties. In an irreducible variety, every Zariski open set is dense. One may show that every affine variety is equal to the union of finitely many irreducible subvarieties.

Importantly for our arguments, if $V$ is a finite-dimensional real vector space then $\GL(V)$ may be given the structure of an affine variety by identifying it with the set of all linear operators on $V \oplus \mathbb{R}$ which have the form $A \oplus x$ for some $A \in \ned(V)$ and $x \in \mathbb{R}$ such that $x\cdot (\det A)=1$.  This condition is clearly polynomial and therefore defines an affine subvariety of $\ned(V \oplus \mathbb{R})$. This gives $\GL(V)$ the structure of an affine variety; in this structure a function $p \colon \GL(V) \to \mathbb{R}$ is a polynomial if $p(A)$ is a polynomial function of the matrix entries of $A$ in some basis together with the additional variable $1/\det A$. We note that for every $B \in \GL(V)$ the maps $A \mapsto AB$ and $A \mapsto BA$ are homeomorphisms in the Zariski topology on $\GL(V)$, as is the map $A \mapsto A^{-1}$.

For the purposes of this article a linear algebraic group will be any Zariski-closed subgroup of $\GL(V)$, where $V$ is a finite-dimensional real vector space. Importantly, the Zariski closure of a \emph{subsemigroup} of $\GL(V)$ is always a linear algebraic group. Every linear algebraic group $G \leq \GL(V)$ has only finitely many connected components in the Zariski topology. These components are disjoint and there exists a unique component of $G$ containing the identity, which we denote by  $G^0$ and call the \emph{identity component} of $G$. We note that since every $A \in G$ induces a Zariski homeomorphism of $G$ by left (or right) multiplication, left or right multiplication by $A$ induces a permutation of the connected components of $G$. It is not difficult to show that the identity component of $G$ is a normal subgroup of $G$. If $G_1\leq \GL(V_1)$ and $G_2\leq \GL(V_2)$ are linear algebraic groups, a \emph{regular representation} $\phi \colon G_1 \to G_2$ will be any group homomorphism which is also a polynomial. We call $\phi$ an \emph{irreducible} representation if there is no proper nonzero linear subspace of $V_2$ which is preserved by every element of $\phi(G_1)$.

We finish this section by highlighting for the reader some important instances of Zariski closed sets which will be used in our arguments. If $G \leq \GL(V)$ is a linear algebraic group and $U_1,U_2 \subseteq V$ are linear subspaces then the set $\{A \in G \colon AU_1 = U_2\}$ is Zariski closed, because if $u_1,\ldots,u_k$ is a basis for $U_1$ and $v_1,\ldots,v_\ell$ a basis for $U_2^\perp$ then this set is equal to the intersection of the sets $\{A \in G \colon \langle Au_i, v_j\rangle=0\}$ over all $i=1,\ldots,k$ and $j=1,\ldots,\ell$, which is clearly a subvariety of $G$. Similarly if $v \in V$ is arbitrary then the set $\{A \in G \colon Av \in U_2\}$ is Zariski closed since $A$ belongs to this set if and only if $\langle Av,v_j\rangle=0$ for every $j=1,\ldots,\ell$. We also note that if $B \in \ned(V)$ is arbitrary then the set $\{A \in G \colon (AB)^2=0\}$ is Zariski closed since each of the finitely many entries of the matrix $(AB)^2$ is a polynomial function of the entries of $A$.

\subsection{Lyapunov exponents}\label{ss:lyap}


In all sections of this article except \S\ref{se:thirdstage} we will have frequent need to refer to Lyapunov exponents. Let $V$ be a finite-dimensional real vector space, let $(A_1,\ldots,A_N) \in \GL(V)^N$ and let $\mu \in \mathcal{M}_\sigma(\Sigma_N)$. If $\langle \cdot,\cdot\rangle$ is any inner product on $V$ then we define the \emph{Lyapunov exponents} of $(A_1,\ldots,A_N)$ to be the quantities
\[\lambda_i(A,\mu):=\lim_{n \to \infty}\frac{1}{n} \int \log \sigma_i(A_{x|_n})\,d\mu(x)\]
for $i=1,\ldots,\dim V$. For every $k=1,\ldots,\dim V$ the limit
\begin{equation}\label{eq:rose-hork}\sum_{i=1}^k \lambda_i(A,\mu) = \lim_{n \to \infty}\frac{1}{n} \int \log \prod_{i=1}^k \sigma_i(A_{x|_n})\,d\mu(x)\end{equation}
exists by subadditivity as a consequence of the inequality
\[\prod_{i=1}^k \sigma_i(AB) \leq \left(\prod_{i=1}^k \sigma_i(A)\right) \left(\prod_{i=1}^k \sigma_i(B)\right)\]
noted in \S\ref{sss:singular}, and it follows that the limit in the definition of $\lambda_i(A,\mu)$ is well-defined for every $i=1,\ldots,k$ since it is a difference of two limits of the form \eqref{eq:rose-hork}. It is clear that $\lambda_1(A,\mu) \geq \lambda_2(A,\mu) \geq \cdots \geq \lambda_{\dim V}(A,\mu)$ as a consequence of the corresponding inequality for singular values. We say that $(A_1,\ldots,A_N) \in \GL(V)^N$ has \emph{simple top Lyapunov exponent with respect to $\mu$} if $\lambda_1(A,\mu)>\lambda_2(A,\mu)$.

The Lyapunov exponents are independent of the choice of inner product on $V$ which is used to define the singular values: if $\sigma_i(A)$ and $\hat\sigma_i(A)$ denote the $i^{\mathrm{th}}$ singular value of $A$ calculated using two different inner products on $V$ then as remarked in \S\ref{sss:singular} there is a constant $C>0$ such that $|\log \sigma_i(A) - \log \hat\sigma_i(A)| \leq C$ for all $A \in \GL(V)$, and consequently
\[\lim_{n \to \infty}\frac{1}{n} \int \log \sigma_i(A_{x|_n})\,d\mu(x)= \lim_{n \to \infty}\frac{1}{n} \int \log \hat\sigma_i(A_{x|_n})\,d\mu(x).\]
 In particular we are at liberty to change the inner product on $V$ without affecting the Lyapunov exponents, if there is advantage in doing so. We will take advantage of this principle in \S\ref{se:pf2} below.
 
We lastly note the following useful result which will be applied in \S\ref{se:firststage} and \S\ref{se:pf2}:
\begin{proposition}[\cite{Mo12}]\label{pr:motwelve}
Let $N \geq 2$, let $V$ be a finite-dimensional real vector space, let $(B_1,\ldots,B_N) \in \GL(V)^N$ and let $\mu \in \mathcal{M}_\sigma(\Sigma_N)$ be ergodic. Then
\[\limsup_{n \to \infty} \frac{1}{n}\log \rho\left(B_{x|_n}\right) = \lambda_1(B,\mu)\]
for $\mu$-a.e. $x \in \Sigma_N$.
\end{proposition}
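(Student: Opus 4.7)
The upper bound $\limsup_{n\to\infty} \frac{1}{n}\log \rho(B_{x|_n}) \leq \lambda_1(B,\mu)$ is immediate from $\rho(A) \leq \|A\|$ combined with Kingman's subadditive ergodic theorem, which gives $\frac{1}{n}\log \|B_{x|_n}\| \to \lambda_1(B,\mu)$ for $\mu$-a.e.\ $x$. The content of the proposition is the matching lower bound, and my plan is to extract it from Oseledets' Multiplicative Ergodic Theorem combined with Poincar\'e recurrence.

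Since $\mu$ is ergodic, Oseledets supplies for $\mu$-a.e.\ $x$ a measurable equivariant splitting $V = F_1(x) \oplus F_2(x)$ satisfying $B_{x|_n} F_j(x) = F_j(\sigma^n x)$, where $F_1(x)$ has some constant dimension $d_1 \geq 1$ and corresponds to the top Lyapunov exponent $\lambda_1$, while $F_2(x)$ collects the strictly smaller exponents; moreover every singular value of the restriction of $B_{x|_n}$ to $F_1(x)$ grows at the common rate $\lambda_1$, and the angles between $B_{x|_n} F_1(x)$ and $F_2(\sigma^n x)$ decay at most subexponentially. Apply Lusin's theorem to produce a compact set $K$ of measure arbitrarily close to $1$ on which the maps $x \mapsto F_j(x)$ are continuous. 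For $\mu$-a.e.\ $x \in K$, Poincar\'e recurrence furnishes a sequence $n_k \to \infty$ with $\sigma^{n_k}(x) \in K$ and $\sigma^{n_k}(x) \to x$. Continuity on $K$ yields $F_j(\sigma^{n_k}(x)) \to F_j(x)$, so by equivariance the subspaces $B_{x|_{n_k}} F_j(x)$ also converge to $F_j(x)$. After suitable normalisation, this means that in a basis adapted to the splitting $V = F_1(x) \oplus F_2(x)$ the off-diagonal blocks of $B_{x|_{n_k}}$ become asymptotically negligible relative to the $(1,1)$-block.

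The $(1,1)$-block $C_k \in \ned(F_1(x))$ has operator norm $\asymp e^{n_k \lambda_1}$, and because all singular values on $F_1(x)$ grow at the common rate $\lambda_1$ the quantity $|\det C_k|$ is of order $e^{n_k d_1 \lambda_1}$; combining these facts shows that every eigenvalue of $C_k$ has modulus $\asymp e^{n_k \lambda_1}$, so $\rho(C_k) \asymp e^{n_k \lambda_1}$. A continuity-of-spectrum argument for small perturbations of block-upper-triangular matrices then gives $\rho(B_{x|_{n_k}}) \geq e^{n_k \lambda_1 - o(n_k)}$, producing the required lower bound on the limsup for $\mu$-a.e.\ $x \in K$; an exhaustion by compact Lusin sets of measure tending to $1$ promotes this to $\mu$-a.e.\ $x$. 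The principal obstacle is quantitative: the convergence of $B_{x|_{n_k}} F_1(x)$ to $F_1(x)$ must happen fast enough to survive perturbation analysis at the exponential scale $e^{n_k \lambda_1}$, which is arranged by refining the return subsequence so that $\sigma^{n_k}(x)$ agrees with $x$ on initial segments whose lengths grow fast enough to absorb the tempered error rates in the Oseledets splitting.
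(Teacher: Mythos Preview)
The paper's own proof is a one-line citation: it notes that $\rho(B_{x|_n})=\rho(B_{x|_n}^\top)$, that $\mathcal{A}(x,n):=B_{x|_n}^\top=B_{x_n}^\top\cdots B_{x_1}^\top$ satisfies the standard cocycle identity $\mathcal{A}(x,n+m)=\mathcal{A}(\sigma^m x,n)\mathcal{A}(x,m)$ over $\sigma$, and then invokes \cite[Theorem~1.5]{Mo12} directly. Your sketch is effectively an attempt to reprove that cited theorem via Oseledets plus Poincar\'e recurrence, which is indeed the classical route, so the difference in approach is only that the paper outsources this step.

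There is, however, a concrete error in your setup. The product $B_{x|_n}=B_{x_1}\cdots B_{x_n}$ is \emph{not} a cocycle over $\sigma$ in the standard sense: it satisfies $B_{x|_{n+m}}=B_{x|_n}\,B_{(\sigma^n x)|_m}$, and your asserted equivariance $B_{x|_n}F_j(x)=F_j(\sigma^n x)$ is internally inconsistent. (Take $n=1$, giving $B_{x_1}F_j(x)=F_j(\sigma x)$; iterating yields $B_{x_2}B_{x_1}F_j(x)=F_j(\sigma^2 x)$, whereas your $n=2$ statement reads $B_{x_1}B_{x_2}F_j(x)=F_j(\sigma^2 x)$.) This is precisely why the paper transposes before citing. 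On the natural extension $\hat\Sigma_N$ the correct Oseledets relation is $B_{x|_n}F_j(\hat\sigma^n x)=F_j(x)$, with the spaces transported backwards; you also need $\hat\Sigma_N$ in any case, since over the non-invertible shift Oseledets gives only a filtration, not a splitting. With that correction your argument does go through: at recurrence times $\hat\sigma^{n_k}x\to x$ on a Lusin set, $B_{x|_{n_k}}$ is \emph{exactly} block-diagonal from $F_1(\hat\sigma^{n_k}x)\oplus F_2(\hat\sigma^{n_k}x)$ to $F_1(x)\oplus F_2(x)$, the change of basis between these two splittings tends to the identity, and your determinant-versus-norm estimate on the top block then yields $\rho(B_{x|_{n_k}})\geq e^{n_k\lambda_1-o(n_k)}$ without the delicate ``fast enough return'' refinement you worried about at the end.
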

\begin{proof}
The result follows by applying the subadditive ergodic theorem and \cite[Theorem 1.5]{Mo12} to the cocycle $\mathcal{A} \colon \Sigma_N \times \mathbb{N} \to \GL(V)$ defined by $\mathcal{A}(x,n):=B_{x_n}^\top \cdots B_{x_1}^\top = B_{x|_n}^\top$. 
\end{proof}

\subsection{Prior results on generalised matrix equilibrium states}

Throughout this article we will require various facts on the structure of generalised matrix equilibrium states which were established in \cite{BoMo18} and which we collect here for the reader's convenience.

\subsubsection{Subspace classes}

Ergodic generalised matrix equilibrium states were characterised in \cite{BoMo18} via an algebraic object which we now describe.  If $V$ is a finite-dimensional real vector space then for the purposes of this article the Grassmannian of $V$, denoted $\Gr(V)$, is defined to be the set of all nonzero linear subspaces of $V$. Note that $\Gr(V)$ thus defined includes the space $V$ itself. If $1 \leq \ell \leq \dim V$ then we let $\Gr_\ell(V)$ denote the set of all $\ell$-dimensional linear subspaces of $\Gr(V)$.

Let $k \geq 1$, let $V_1,\ldots,V_k$ be finite-dimensional real vector spaces and let $(A_1^{(j)},\ldots,A_N^{(j)}) \in \GL(V_j)^N$ for every $j=1,\ldots,k$. We define a \emph{subspace class} to be any finite nonempty subset of $\prod_{j=1}^k\Gr(V_j)$. We will say that a subspace class $\mathcal{W} \subseteq \prod_{j=1}^k \Gr(V_j)$ is \emph{equivariant} if for every $(W_j)_{j=1}^k \in \mathcal{W}$ we have
\[\left\{(A_\iii^{(j)}W_j)_{j=1}^k \colon \iii \in \Sigma_N^*\right\}\subseteq \mathcal{W},\]
\emph{transitive} if for every $(W_j)_{j=1}^k \in \mathcal{W}$
\[\left\{(A_\iii^{(j)}W_j)_{j=1}^k \colon \iii \in \Sigma_N^*\right\}= \mathcal{W},\]
and \emph{primitive} if for some integer $p \geq 1$ we have for every $(W_j)_{j=1}^k \in \mathcal{W}$
\[\left\{(A_\iii^{(j)}W_j)_{j=1}^k \colon \iii \in \Sigma_N^*\text{ and }|\iii|=p\right\}= \mathcal{W}.\]
In other words $\mathcal{W}$ is equivariant if and only if for every $\iii \in \Sigma_N^*$ and $(W_j)_{j=1}^k \in \mathcal{W}$ the tuple $(A_\iii^{(j)}W_j)_{j=1}^k$ also belongs to $\mathcal{W}$; is transitive if and only if for every  $(W_j)_{j=1}^k,(W_j')_{j=1}^k \in \mathcal{W}$ there exists $\iii \in \Sigma_N^*$ such that $(A_\iii^{(j)}W_j)_{j=1}^k=(W_j')_{j=1}^k$; and is primitive if and only if the word $\iii$ in this definition of transitivity can be chosen so as to have the same length for all choices of $(W_j)_{j=1}^k$ and $(W_j')_{j=1}^k$.  A third description in terms of Perron-Frobenius theory may also be helpful. Suppose that we were to define a non-negative integer matrix $M$, whose rows and columns are indexed by the elements of $\mathcal{W}$, by placing $1$ at the intersection of the row $(W_j)_{j=1}^k$ and the column $(W_j')_{j=1}^k$ if there exists $i \in \{1,\ldots,N\}$ such that $(A_i^{(j)}W_j)_{j=1}^k=(W_j')_{j=1}^k$, and $0$ otherwise. Equivariance of $\mathcal{W}$ ensures that this definition makes sense and implies that every row of $M$ has at least one nonzero entry; transitivity asserts precisely that $M$ is an irreducible matrix in the standard sense of Perron-Frobenius theory; and primitivity asserts that $M$ is a primitive matrix in the sense of Perron-Frobenius theory. 

\subsubsection{Properties of generalised matrix equilibrium states}

The following result from \cite{BoMo18} characterises ergodic generalised matrix equilibrium states  in terms of subspace classes in the case where every $(A_1^{(j)},\ldots,A_N^{(j)})$ is irreducible:

\begin{theorem}[\cite{BoMo18}]\label{th:bomo}
Let $k \geq 1$ and $N \geq 2$ and for each $j=1,\ldots,k$ let $V_j$ be a finite-dimensional real vector space, $(A_1^{(j)},\ldots,A_N^{(j)}) \in \GL(V_j)^N$ an irreducible $N$-tuple of linear maps and $\beta_j>0$ a real number. For each $j=1,\ldots,k$ let $\ell_j \in \{1,\ldots,\dim V_j\}$ be the smallest integer such that there exists a nonzero linear subspace $U_j \subseteq V_j$ which has finite orbit under the action of the semigroup $\{A_\iii^{(j)} \colon \iii \in \Sigma_N^*\}$. Then:

\begin{enumerate}[(i)]
\item
 If $\mathcal{W}\subseteq \prod_{j=1}^k \Gr_{\ell_j}(V)$ is a transitive subspace class, define a potential $\Phi_{\mathcal{W}} \colon  \Sigma_N^* \to (0,+\infty)$ by
\[\Phi_{\mathcal{W}}(\iii):=\max_{(W_j)_{j=1}^k \in \mathcal{W}}\prod_{j=1}^k \left\|A_\iii^{(j)}|_{W_j}\right\|^{\beta_j}.\]
Then $\Phi_{\mathcal{W}}$ is submultiplicative and  quasimultiplicative and has a unique equilibrium state $\mu \in \mathcal{M}_\sigma(\Sigma_N)$. There exists $C>1$ such that
\[C^{-1}\Phi_{\mathcal{W}}(\iii) \leq e^{|\iii|P(\Phi_{\mathcal{W}})}\mu([\iii]) \leq C\Phi_{\mathcal{W}}(\iii)\]
for all $\iii \in \Sigma_N^*$, and in particular $\mu$ is fully supported on $\Sigma_N$.
\item
If we define a potential $\Phi \colon \Sigma_N^* \to (0,+\infty)$ by
\[\Phi(\iii):=\prod_{j=1}^k \left\|A_\iii^{(j)}\right\|^{\beta_j}\]
for all $\iii \in \Sigma_N^*$, then for every ergodic equilibrium state $\mu$ of $\Phi$ there exists a transitive subspace class $\mathcal{W}\subseteq \prod_{j=1}^k \Gr_{\ell_j}(V)$ such that $\mu$  is the unique equilibrium state of the potential $\Phi_{\mathcal{W}}$ defined as in (i). In particular every equilibrium state of $\Phi$ is fully supported on $\Sigma_N$.
\end{enumerate}
\end{theorem}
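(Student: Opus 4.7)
The plan is to treat the two parts separately, establishing part (i) first and then using its characterisation of equilibrium states to handle part (ii). Submultiplicativity of $\Phi_{\mathcal{W}}$ is a direct consequence of the equivariance of $\mathcal{W}$ under the semigroup action, which is implied by transitivity: for any $\iii, \jjj \in \Sigma_N^*$ and $(W_j)_{j=1}^k \in \mathcal{W}$, setting $W_j' := A_\jjj^{(j)} W_j$ we have $(W_j')_{j=1}^k \in \mathcal{W}$ and
\[\prod_{j=1}^k \|A_{\iii\jjj}^{(j)}|_{W_j}\|^{\beta_j} \leq \prod_{j=1}^k \|A_\iii^{(j)}|_{W_j'}\|^{\beta_j} \cdot \|A_\jjj^{(j)}|_{W_j}\|^{\beta_j} \leq \Phi_{\mathcal{W}}(\iii)\,\Phi_{\mathcal{W}}(\jjj),\]
which on maximising over $(W_j)_{j=1}^k$ yields $\Phi_{\mathcal{W}}(\iii\jjj) \leq \Phi_{\mathcal{W}}(\iii)\Phi_{\mathcal{W}}(\jjj)$.

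Quasimultiplicativity is the main technical step. I would approach it through a Grassmannian compactness argument. For each $\iii$, pick $(U_j)_{j=1}^k \in \mathcal{W}$ attaining $\Phi_{\mathcal{W}}(\iii)$ together with unit vectors $u_j \in U_j$ for which $\|A_\iii^{(j)} u_j\| = \|A_\iii^{(j)}|_{U_j}\|$; these objects live in the compact product of $\mathcal{W}$ with the unit sphere bundle. By transitivity of $\mathcal{W}$ and the irreducibility of each $(A_1^{(j)}, \ldots, A_N^{(j)})$, for every such configuration and every target $(W_j)_{j=1}^k \in \mathcal{W}$ there exists a word $\kkk$ of length at most some uniform bound which maps the $U_j$ to $W_j$ so that each $A_\kkk^{(j)}$ is not identically zero along the image direction. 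An upper-semicontinuity argument over the compact configuration space then yields uniform constants $\delta > 0$ and $m \geq 1$ realising quasimultiplicativity. Once submultiplicativity and quasimultiplicativity are in hand, the Gibbs inequality is deduced by a standard Feng--K\"aenm\"aki-type argument: the upper bound follows from submultiplicativity combined with shift-invariance of $\mu$ and comparison with the partition sum, and the lower bound follows from iterating quasimultiplicativity to compare $\mu([\iii])$ with the same partition sum, whose exponential growth rate is $P(\Phi_{\mathcal{W}})$. Uniqueness of the equilibrium state is then immediate via ergodic decomposition of the two-sided Gibbs bound, and full support of $\mu$ follows from the lower bound.

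For part (ii), let $\mu$ be an ergodic equilibrium state of $\Phi$. The approach is to apply the Oseledets theorem to each cocycle $(A_1^{(j)}, \ldots, A_N^{(j)})$ relative to $\mu$, producing for $\mu$-a.e. $x$ a measurable subspace $W_j(x) \subseteq V_j$ of dimension equal to the multiplicity of the top Lyapunov exponent and satisfying the equivariance $A_{x_1}^{(j)} W_j(x) = W_j(\sigma x)$. By the irreducibility of each tuple together with the minimality of $\ell_j$, an invariant family of such subspaces of minimum dimension must take only finitely many values $\mu$-a.e. in $\Gr_{\ell_j}(V_j)$, so that the tuples $(W_j(x))_{j=1}^k$ form a finite semigroup-invariant subset of $\prod_j \Gr_{\ell_j}(V_j)$. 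Ergodicity of $\mu$ then selects a single semigroup orbit $\mathcal{W}$, which is automatically transitive. The equality $\Lambda(\Phi, \mu) = \Lambda(\Phi_{\mathcal{W}}, \mu)$ follows from Oseledets applied along $\mu$-typical trajectories, since the dominant growth of $\|A_\iii^{(j)}\|$ agrees with that of $\|A_\iii^{(j)}|_{W_j(x)}\|$ to subexponential order, so that $\mu$ is an equilibrium state of $\Phi_{\mathcal{W}}$ and hence, by part (i), its unique one.

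The main obstacle is establishing quasimultiplicativity with genuinely uniform constants in the multi-factor setting: naive bounds degenerate as $\iii$ and $\jjj$ range over $\Sigma_N^*$, and reconciling the $k$ different cocycles requires simultaneous control in the product Grassmannian $\prod_j \Gr_{\ell_j}(V_j)$ that cannot straightforwardly be reduced to single-factor arguments on each $\Gr_{\ell_j}(V_j)$, particularly when the individual tuples admit proper invariant subspace classes beyond those dictated by the minimality of $\ell_j$.
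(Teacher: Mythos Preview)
This theorem is not proved in the paper; it is quoted from \cite{BoMo18}, with the remark after Theorem~\ref{th:bomo2} noting that quasimultiplicativity and the Gibbs inequality ``feature prominently in the proof'' there. So there is no in-paper proof to compare against directly, but the paper does describe the structure of the argument from \cite{BoMo18} in the discussion preceding Theorem~\ref{th:core0}, and your proposal diverges from it in a way that leaves a real gap.

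Your submultiplicativity argument is fine. For quasimultiplicativity, however, the compactness-plus-upper-semicontinuity sketch does not furnish the crucial step: finding a single word $\kkk$ that works \emph{simultaneously} for all $j=1,\ldots,k$. You correctly flag this as the main obstacle in your final paragraph, but the body of the proposal does not actually overcome it. The argument used in \cite{BoMo18}, as summarised in \S\ref{se:thirdstage} of the paper, is algebro-geometric rather than analytic: one embeds the semigroup in $\GL(\bigoplus_j V_j)$, takes its Zariski closure $G$, and works in the identity component $G^0$, which is an \emph{irreducible variety}. For each $j$ the ``good'' set of group elements is nonempty Zariski open in $G^0$, and irreducibility forces any finite intersection of nonempty Zariski open sets to be nonempty and dense, hence to meet the semigroup. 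A purely metric compactness argument on the product of Grassmannians does not supply this simultaneity, because the $k$ open conditions live in different representations and there is no \emph{a priori} reason their preimages in the word space should overlap.

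For part~(ii), the Oseledets approach has a genuine gap. The Oseledets subspace attached to the top Lyapunov exponent of $(A_1^{(j)},\ldots,A_N^{(j)})$ has dimension equal to the multiplicity of that exponent, which need not equal $\ell_j$, and more seriously there is no reason for the map $x\mapsto W_j(x)$ to take only finitely many values. Your assertion that ``an invariant family of such subspaces of minimum dimension must take only finitely many values $\mu$-a.e.'' is exactly what needs proof and is generally false for Oseledets spaces. The paper's own extension of this circle of ideas (Theorem~\ref{th:prox-unique}) does construct finitely-valued equivariant functions $\mathsf{U}_j$, but only \emph{after} reducing to the case of simple top Lyapunov exponent and then invoking Lemma~\ref{le:poximal2} to obtain a finite $(A_1^{(j)},\ldots,A_N^{(j)})$-invariant splitting $V_j=\bigoplus_i U_j^i$; the functions $\mathsf{U}_j$ pick out which block of this fixed finite splitting carries the dominant growth, which is a different mechanism from Oseledets.
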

In general a potential of the form $\Phi(\iii):=\prod_{j=1}^k \|A_\iii^{(j)}\|^{\beta_j}$ may admit multiple ergodic equilibrium states corresponding to different choices of transitive subspace class $\mathcal{W}$: the number of ergodic equilibrium states is bounded above by the quantity $(\prod_{1 \leq j \leq k} \dim V_j ) / (\max_{1 \leq j \leq k} \dim V_j)$  and in at least some situations this bound can be attained, see \cite{BoMo18}. Moreover, in general there may be infinitely many choices of subspace class $\mathcal{W}$ which generate the same equilibrium state. One of the major components of the proof of Theorem \ref{th:main} will be an extension of Theorem \ref{th:bomo}(ii) in \S\ref{se:pf2}. In this result we will show that if additionally every $(A_1^{(j)},\ldots,A_N^{(j)})$ has simple top Lyapunov exponent with respect to $\mu$ then the choice of transitive subspace class $\mathcal{W}$ is unique, and if $\mu$ is totally ergodic then moreover $\mathcal{W}$ must be primitive. 

Outside the irreducible case, the following additional result of \cite{BoMo18} implies that every ergodic generalised matrix equilibrium state can be expressed as the equilibrium state of a potential satisfying the hypotheses of Theorem \ref{th:bomo}, and will also be applied in the following section. In order to prove Theorem \ref{th:main} we will likewise need to extend the below result to include a statement on simple top Lyapunov exponents.
\begin{theorem}[\cite{BoMo18}]\label{th:bomo2}
Let $k \geq 1$ and $N \geq 2$ and for each $j=1,\ldots,k$ let $V_j$ be a finite-dimensional real vector space, $(A_1^{(j)},\ldots,A_N^{(j)}) \in \GL(V_j)^N$ an $N$-tuple of linear maps and $\beta_j>0$ a real number. Then for each $j=1,\ldots,k$ there exist an integer $r_j \geq 1$, integers $d_{j,1},\ldots,d_{j,r_j} \geq 1$ satisfying $\sum_{t=1}^{r_j} d_{j,t}=\dim V_j$ and a basis for $V_j$ in which we may write 
\[A_i^{(j)} = \begin{pmatrix} A_i^{(j,1)} & *& \cdots & * & * \\ 
0 & A_i^{(j,2)}& \cdots & * & * \\ 
\vdots & \vdots & \ddots   & \vdots & \vdots \\
 0 & 0 &\cdots & A_i^{(j,r_j-1)} & * \\
  0 & 0 & \cdots&0 & A_i^{(j,r_j)}
\end{pmatrix}\]
for every $i=1,\ldots,N$, where $(A_1^{(j,t)},\ldots,A_N^{(j,t)}) \in \GL_{d_{j,t}}(\mathbb{R})^N$ is irreducible for every $t=1,\ldots,r_j$. If $\mu$ is an ergodic equilibrium state of the potential $\Phi \colon \Sigma_N^* \to (0,+\infty)$ defined by
\[\Phi(\iii):=\prod_{j=1}^k \left\|A_\iii^{(j)}\right\|^{\beta_j},\]
then there exist $t_1,\ldots,t_k$ such that $1 \leq t_j \leq r_j$ for every $j=1,\ldots,k$ and such that $\mu$ is an equilibrium state of the potential $\hat\Phi \colon \Sigma_N^* \to (0,+\infty)$ defined by
\[\hat\Phi(\iii):=\prod_{j=1}^k \left\|A_\iii^{(j,t_j)}\right\|^{\beta_j}\]
and satisfies $P(\Phi)=P(\hat\Phi)$. Furthermore, the number of distinct ergodic equilibrium states of $\Phi$ is not greater than $\prod_{j=1}^k \dim V_j$.
\end{theorem}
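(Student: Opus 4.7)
The plan is to handle the three assertions in sequence: (1) the block upper triangularisation with irreducible diagonal blocks, (2) the identification of $\mu$ as an equilibrium state of some $\hat\Phi$ with $P(\Phi)=P(\hat\Phi)$, and (3) the counting bound. The driving observation is that in the required basis the diagonal blocks of $A_\iii^{(j)}$ are precisely the $A_\iii^{(j,t)}$, so in particular the eigenvalues of $A_\iii^{(j)}$ are the union of the eigenvalues of its diagonal blocks.

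For (1), I would work on each $j$ separately by induction on $\dim V_j$: either $(A_1^{(j)},\ldots,A_N^{(j)})$ is already irreducible, or one picks a nonzero proper common invariant subspace $U\subseteq V_j$ of minimal dimension, observes that the restriction of the tuple to $U$ is irreducible by minimality, and decomposes the induced tuple on $V_j/U$ by the inductive hypothesis. Concatenating a basis of $U$ with pullbacks of the bases of the inductively obtained flag on $V_j/U$ yields a basis of $V_j$ in which every $A_i^{(j)}$ has the required block upper triangular form with irreducible diagonal blocks $(A_1^{(j,t)},\ldots,A_N^{(j,t)})$.

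For (2), each $A_\iii^{(j,t)}$ is a principal submatrix of the block-triangular $A_\iii^{(j)}$, so $\|A_\iii^{(j,t)}\|\le\|A_\iii^{(j)}\|$, which gives $\hat\Phi(\iii)\le\Phi(\iii)$ for every choice of indices $(t_j)$ and therefore $P(\hat\Phi)\le P(\Phi)$. The crucial step is the Lyapunov identity
\[
\lambda_1(A^{(j)},\mu)=\max_{1\le t\le r_j}\lambda_1(A^{(j,t)},\mu)
\]
for every ergodic $\mu$ and every $j$. To obtain this I would invoke Proposition~\ref{pr:motwelve}: since $A_{x|_n}^{(j)}$ is block upper triangular with diagonal blocks $A_{x|_n}^{(j,t)}$, its spectral radius satisfies $\rho(A_{x|_n}^{(j)})=\max_t\rho(A_{x|_n}^{(j,t)})$, so for $\mu$-almost every $x$,
\[
\lambda_1(A^{(j)},\mu)=\limsup_{n\to\infty}\frac{1}{n}\log\rho(A_{x|_n}^{(j)})=\max_t\limsup_{n\to\infty}\frac{1}{n}\log\rho(A_{x|_n}^{(j,t)})=\max_t\lambda_1(A^{(j,t)},\mu),
\]
where the interchange of $\limsup_n$ with $\max_t$ is legitimate because $t$ ranges over the finite set $\{1,\ldots,r_j\}$. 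Choosing $t_j^*$ to realise the maximum for each $j$ and letting $\hat\Phi$ be built from the blocks $(j,t_j^*)$, we have $\Lambda(\Phi,\mu)=\Lambda(\hat\Phi,\mu)$, whence
\[
P(\Phi)=h(\mu)+\Lambda(\Phi,\mu)=h(\mu)+\Lambda(\hat\Phi,\mu)\le P(\hat\Phi)\le P(\Phi);
\]
equality throughout shows $\mu$ is an equilibrium state of $\hat\Phi$ and yields $P(\Phi)=P(\hat\Phi)$.

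For (3), every ergodic equilibrium state $\mu$ of $\Phi$ is, by (2), an ergodic equilibrium state of some $\hat\Phi$ indexed by $(t_j)\in\prod_j\{1,\ldots,r_j\}$; and since the tuples defining $\hat\Phi$ are all irreducible, the multiplicity remark following Theorem~\ref{th:bomo} bounds the number of ergodic equilibrium states of each such $\hat\Phi$ by $\prod_j d_{j,t_j}/\max_j d_{j,t_j}\le\prod_j d_{j,t_j}$. Summing over all index tuples $(t_j)$ (with possible double-counting when a single $\mu$ arises from several choices),
\[
\#\{\text{ergodic equilibrium states of }\Phi\}\le\sum_{(t_j)}\prod_j d_{j,t_j}=\prod_{j=1}^k\sum_{t_j=1}^{r_j}d_{j,t_j}=\prod_{j=1}^k\dim V_j.
\]
The main obstacle lies in step (2): to avoid wrestling with the off-diagonal blocks of block upper triangular products one must use the spectral-radius characterisation of the top Lyapunov exponent along $\mu$-typical trajectories provided by Proposition~\ref{pr:motwelve}, after which the rest of the argument is a clean chain of inequalities in the variational formula for pressure.
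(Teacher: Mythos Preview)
Your argument is correct. The paper does not supply its own proof of this theorem: it simply cites \cite[Theorem~5]{BoMo18} and remarks that the pressure equality $P(\Phi)=P(\hat\Phi)$, though not explicit in that statement, occurs in its proof. Your route is therefore more self-contained than what the paper offers.

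On the comparison: your key step (2), deducing $\lambda_1(A^{(j)},\mu)=\max_t\lambda_1(A^{(j,t)},\mu)$ from Proposition~\ref{pr:motwelve} via the spectral-radius identity $\rho(A_\iii^{(j)})=\max_t\rho(A_\iii^{(j,t)})$ for block-triangular products, is precisely the mechanism the paper itself uses later in the proof of Theorem~\ref{th:reducks} (the ``second claim'' there) to compare Lyapunov exponents of a diagonal block with those of the ambient matrix---so your idea is entirely in the spirit of the paper even though the paper defers this particular theorem to the external reference. The interchange $\limsup_n\max_t=\max_t\limsup_n$ over a finite index set is indeed valid (a pigeonhole on the maximising index along a subsequence realising the $\limsup$). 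For step~(3) you invoke the multiplicity bound $(\prod_j d_{j,t_j})/\max_j d_{j,t_j}$ stated after Theorem~\ref{th:bomo}, which itself originates in \cite{BoMo18}; this is legitimate, but it means your counting argument is not fully independent of that source.
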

\emph{Remark.} Theorem \ref{th:bomo2} follows from the statement of \cite[Theorem 5]{BoMo18} except for the fact that $P(\Phi)=P(\hat\Phi)$, which is not made explicit in the statement of that theorem but appears in the theorem's proof. Similarly, in Theorem \ref{th:bomo} the fact that every potential of the form $\Phi_{\mathcal{W}}$ where $\mathcal{W} \subseteq \prod_{j=1}^k \Gr_{\ell_j}(V_j)$ is a transitive subspace class is quasimultiplicative and satisfies a Gibbs inequality was not made explicit in the statement of \cite[Theorem 4]{BoMo18} but features prominently in the proof.




\section{Reduction to the case of simple top Lyapunov exponents}\label{se:firststage}

As was described in the introduction the first, and by far the simplest, step in the proof of Theorem \ref{th:main} is to reduce the problem to the case where the generalised matrix equilibrium state $\mu$ is defined by tuples $(A_1^{(j)},\ldots,A_N^{(j)}) \in \GL(V_j)^N$ which are all irreducible and all have simple top Lyapunov exponent with respect to $\mu$. In this section we prove:
\begin{theorem}\label{th:reducks}
Let $N \geq 2$ and let $\mu \in \mathcal{M}_\sigma(\Sigma_N)$ be an ergodic generalised matrix equilibrium state. Then there exist finite-dimensional real vector spaces $V_j$, irreducible tuples $(A_1^{(j)},\ldots,A_N^{(j)}) \in \GL(V_j)^N$ each having simple top Lyapunov exponent with respect to $\mu$, and real numbers $\beta_j>0$ for each $j=1,\ldots,k$ such that $\mu$ is an ergodic equilibrium state of the potential 
\[\Phi(\iii):=\prod_{j=1}^k \left\|A_\iii^{(j)}\right\|^{\beta_j}.\]
\end{theorem}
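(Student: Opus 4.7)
The plan is to iterate Theorem \ref{th:bomo2}, interleaving it with a passage to exterior powers that forces the simple top Lyapunov exponent condition.

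First, apply Theorem \ref{th:bomo2} to any potential $\Phi_0(\iii) := \prod_j \|A_\iii^{(j)}\|^{\beta_j}$ witnessing $\mu$ as a generalised matrix equilibrium state. Replacing the factor tuples with the irreducible diagonal blocks produced by that theorem, I may assume from the outset that each $(A_1^{(j)},\ldots,A_N^{(j)})$ is irreducible, which gives the first half of the conclusion. To force simple top Lyapunov exponents, for each $j$ let $m_j \geq 1$ denote the multiplicity of $\lambda_1(A^{(j)},\mu)$ in the $\mu$-Lyapunov spectrum of $(A_1^{(j)},\ldots,A_N^{(j)})$, so that
\[\lambda_1(A^{(j)},\mu) = \cdots = \lambda_{m_j}(A^{(j)},\mu) > \lambda_{m_j+1}(A^{(j)},\mu)\]
(with the convention $\lambda_{d_j+1} := -\infty$ when $m_j=\dim V_j$), and define the modified potential
\[\Phi'(\iii) := \prod_{j=1}^{k} \left\|A_\iii^{(j)\wedge m_j}\right\|^{\beta_j/m_j}.\]
From the singular value formula $\|A^{\wedge m}\| = \sigma_1(A)\cdots\sigma_m(A)$ and the bound $\sigma_i(A) \leq \sigma_1(A) = \|A\|$, we have $\log \Phi' \leq \log \Phi_0$ pointwise on $\Sigma_N^*$, and consequently
\[\Lambda(\Phi',\nu) - \Lambda(\Phi_0,\nu) = \sum_j \frac{\beta_j}{m_j} \sum_{i=2}^{m_j}\bigl(\lambda_i(A^{(j)},\nu) - \lambda_1(A^{(j)},\nu)\bigr) \leq 0\]
for every shift-invariant $\nu$, with equality at $\nu = \mu$ by the choice of $m_j$. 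Hence $P(\Phi') \leq P(\Phi_0)$; on the other hand $h(\mu) + \Lambda(\Phi',\mu) = h(\mu) + \Lambda(\Phi_0,\mu) = P(\Phi_0)$, so $P(\Phi') = P(\Phi_0)$ and $\mu$ is an ergodic equilibrium state of $\Phi'$.

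Now apply Theorem \ref{th:bomo2} a second time, to $\Phi'$: this furnishes, for each $j$, an irreducible diagonal block $B^{(j)}$ in the block-upper-triangular decomposition of $A^{(j)\wedge m_j}$ such that $\mu$ is an ergodic equilibrium state of the reduced potential $\tilde\Phi(\iii) := \prod_j \|B^{(j)}_\iii\|^{\beta_j/m_j}$, with $P(\tilde\Phi) = P(\Phi')$. The tuples $(B_1^{(j)},\ldots,B_N^{(j)})$ are irreducible by construction, so only simplicity of their top Lyapunov exponents remains. The Lyapunov spectrum of $A^{(j)\wedge m_j}$ under $\mu$ consists of the sums $\lambda_{i_1}(A^{(j)},\mu) + \cdots + \lambda_{i_{m_j}}(A^{(j)},\mu)$ over increasing index tuples, so its top exponent is $m_j \lambda_1(A^{(j)},\mu)$ with multiplicity exactly one (the next-largest being $(m_j-1)\lambda_1(A^{(j)},\mu) + \lambda_{m_j+1}(A^{(j)},\mu)$, which is strictly smaller). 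Because the Lyapunov spectrum of the irreducible block $B^{(j)}$ embeds as a sub-multiset of that of $A^{(j)\wedge m_j}$, we have $\lambda_1(B^{(j)},\mu) \leq m_j \lambda_1(A^{(j)},\mu)$; the identity $\Lambda(\tilde\Phi,\mu) = \Lambda(\Phi',\mu)$ forces termwise equality $\lambda_1(B^{(j)},\mu) = m_j \lambda_1(A^{(j)},\mu)$ for every $j$, and since this exponent has multiplicity one in $A^{(j)\wedge m_j}$ it is inherited with multiplicity exactly one by $B^{(j)}$, which therefore has simple top Lyapunov exponent. The resulting potential $\tilde\Phi$ satisfies the full conclusion of Theorem \ref{th:reducks}.

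The delicate point is the very last step: one must use the pressure-matching equality together with the sub-multiset structure of Lyapunov spectra under block-upper-triangular decomposition to argue that the irreducible block selected by the second application of Theorem \ref{th:bomo2} must realise the top Oseledets direction of $A^{(j)\wedge m_j}$. Everything else is routine bookkeeping with exterior powers, singular value identities, and the variational principle for subadditive potentials.
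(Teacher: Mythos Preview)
Your proof is correct and follows essentially the same two-step scheme as the paper: pass to exterior powers to force a simple top exponent, then block-triangularise via Theorem~\ref{th:bomo2} and argue that the selected diagonal block inherits simplicity (your initial application of Theorem~\ref{th:bomo2} is harmless but redundant, since the exterior-power step does not need irreducibility). The one substantive difference lies in the inheritance argument you flag as delicate: you invoke the general fact that the Lyapunov spectrum of a diagonal block in a block-upper-triangular cocycle is a sub-multiset of the full spectrum, whereas the paper derives only the specific inequality it needs, namely $\lambda_1(B^{(j)},\mu)+\lambda_2(B^{(j)},\mu)\le \lambda_1(A^{(j)\wedge m_j},\mu)+\lambda_2(A^{(j)\wedge m_j},\mu)$, via the spectral-radius characterisation of the top Lyapunov exponent (Proposition~\ref{pr:motwelve}) together with the elementary observation that the eigenvalues of a diagonal block of $A^{(j)\wedge m_j}_\iii$ form a subset of the eigenvalues of $A^{(j)\wedge m_j}_\iii$ itself --- this makes the paper's argument slightly more self-contained but yours slightly cleaner.
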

By definition $\mu$ admits at least one representation in the above form but without each $(B_1^{(j)},\ldots,B_N^{(j)})$ necessarily being irreducible or having simple top Lyapunov exponent. The result is proved by starting with such a representation, passing to an appropriate exterior power for each $j$ and then finding a block upper triangularisation of each tuple such that for each $j$ one of the tuples of diagonal blocks yields the desired new tuple. We separate the first part of this argument into a proposition as follows:

\begin{proposition}\label{pr:ittstick}
Let $N \geq 2$ and let $\mu \in \mathcal{M}_\sigma(\Sigma_N)$ be a generalised matrix equilibrium state. Then there exist finite-dimensional real vector spaces $U_j$, tuples of linear maps $(B_1^{(j)},\ldots,B_N^{(j)}) \in \GL(U_j)^N$ each having simple top Lyapunov exponent with respect to $\mu$, and real numbers $\gamma_j>0$ for each $j=1,\ldots,k$ such that $\mu$ is an ergodic equilibrium state of the potential 
\[\Psi(\iii):=\prod_{j=1}^k \left\|B_\iii^{(j)}\right\|^{\gamma_j}.\]
\end{proposition}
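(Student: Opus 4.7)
The plan is to replace each tuple $(A_1^{(j)},\ldots,A_N^{(j)})$ by an appropriate exterior power so as to force the top Lyapunov exponent to become simple with respect to $\mu$, while keeping $\mu$ an equilibrium state of the resulting potential. Starting from the given representation $\Phi(\iii)=\prod_{j=1}^k\|A_\iii^{(j)}\|^{\beta_j}$, for each $j$ let $m_j\in\{1,\ldots,\dim V_j\}$ denote the multiplicity of the top Lyapunov exponent of $(A^{(j)}_1,\ldots,A^{(j)}_N)$ with respect to $\mu$, i.e.\ the largest integer $m$ with $\lambda_1(A^{(j)},\mu)=\cdots=\lambda_m(A^{(j)},\mu)$. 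Then set $U_j:=\wedge^{m_j}V_j$, $B_i^{(j)}:=(A_i^{(j)})^{\wedge m_j}\in\GL(U_j)$, and $\gamma_j:=\beta_j/m_j$.

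To verify that $(B_1^{(j)},\ldots,B_N^{(j)})$ has simple top Lyapunov exponent with respect to $\mu$, I would invoke the singular-value identity recalled in \S\ref{sss:singular}: the singular values of $A^{\wedge m_j}$ are precisely the products $\sigma_{i_1}(A)\cdots\sigma_{i_{m_j}}(A)$ indexed over strictly increasing multi-indices $1\leq i_1<\cdots<i_{m_j}\leq\dim V_j$. It follows that the Lyapunov exponents of $(B^{(j)})$ with respect to $\mu$ are exactly the sums $\sum_{\ell=1}^{m_j}\lambda_{i_\ell}(A^{(j)},\mu)$ arranged in decreasing order. The top exponent is $m_j\lambda_1(A^{(j)},\mu)$, while the next largest is $(m_j-1)\lambda_1(A^{(j)},\mu)+\lambda_{m_j+1}(A^{(j)},\mu)$, which is strictly smaller by the maximality of $m_j$. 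In the edge case $m_j=\dim V_j$ the space $U_j$ is one-dimensional, in which case the simple-top-exponent condition is understood vacuously, in keeping with the one-dimensional convention for proximality from \S\ref{sss:prox}.

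To show that $\mu$ remains an equilibrium state of $\Psi(\iii):=\prod_j\|B_\iii^{(j)}\|^{\gamma_j}$, the same singular-value formula yields, for every $\nu\in\mathcal{M}_\sigma(\Sigma_N)$,
\[\Lambda(\Psi,\nu)=\sum_{j=1}^k\gamma_j\sum_{i=1}^{m_j}\lambda_i(A^{(j)},\nu)\leq\sum_{j=1}^k\gamma_j m_j\lambda_1(A^{(j)},\nu)=\Lambda(\Phi,\nu),\]
using $\lambda_i(A^{(j)},\nu)\leq\lambda_1(A^{(j)},\nu)$, and this inequality is an equality at $\nu=\mu$ precisely by the definition of $m_j$. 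Consequently $P(\Psi)\leq P(\Phi)$, while on the other hand $h(\mu)+\Lambda(\Psi,\mu)=h(\mu)+\Lambda(\Phi,\mu)=P(\Phi)\geq P(\Psi)$; combined with the variational upper bound for $\Psi$ this forces $h(\mu)+\Lambda(\Psi,\mu)=P(\Psi)$, so $\mu$ is an equilibrium state of $\Psi$.

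I do not expect a substantive obstacle here: once the multiplicities $m_j$ are fixed, the argument is a clean application of the variational principle. The only delicate point is recognising that the exterior power construction produces a potential \emph{pointwise} dominated by the original in the variable $\nu$, with equality exactly at measures attaining the top-Lyapunov multiplicity profile prescribed by $\mu$---and the definition of $m_j$ is calibrated precisely to this requirement, so that the pressures of $\Phi$ and $\Psi$ coincide and $\mu$ realises both.
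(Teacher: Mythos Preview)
Your proof is correct and follows essentially the same route as the paper: pass to the $m_j^{\text{th}}$ exterior power with $\gamma_j=\beta_j/m_j$, use the singular-value identity $\|A^{\wedge m_j}\|=\sigma_1(A)\cdots\sigma_{m_j}(A)$ to identify the top two Lyapunov exponents of $(B^{(j)})$, and then verify that $\mu$ remains an equilibrium state via $P(\Psi)\leq P(\Phi)$ together with $\Lambda(\Psi,\mu)=\Lambda(\Phi,\mu)$. The only cosmetic difference is that the paper obtains $P(\Psi)\leq P(\Phi)$ from the pointwise inequality $\Psi(\iii)\leq\Phi(\iii)$ on words rather than from $\Lambda(\Psi,\nu)\leq\Lambda(\Phi,\nu)$ for all $\nu$; both arguments are valid and yield the same conclusion.
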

\begin{proof}
Since $\mu$ is a generalised matrix equilibrium state, by definition there exist $k \geq 1$, finite-dimensional real vector spaces $V_j$, tuples $(A_1^{(j)},\ldots,A_N^{(j)}) \in \GL(V_j)^N$ and real numbers $\beta_j>0$ for each $j=1,\ldots,k$ such that $\mu$ is an equilibrium state for the potential $\Phi \colon \Sigma_N\to(0,+\infty)$ defined by
\[\Phi(\iii):=\prod_{j=1}^k \left\|A_\iii^{(j)}\right\|^{\beta_j}.\]
Choose for each $j=1,\ldots,k$ the largest integer $\ell_j \in \{1,\ldots,\dim V_j\}$ such that $\lambda_1(A^{(j)},\mu)=\cdots =\lambda_{\ell_j}(A^{(j)},\mu)$. Define $U_j:=V_j^{\wedge \ell_j}$ for each $j=1,\ldots,k$ and $B_i^{(j)}:=(A_i^{(j)})^{\wedge \ell_j}$  for each $i=1,\ldots,N$ and $j=1,\ldots,k$. For every $A \in \GL(V_j)$ the singular values of $A^{\wedge \ell_j}$ are precisely the numbers $\sigma_{i_1}(A)\cdots \sigma_{i_{\ell_j}}(A)$ such that $1 \leq i_1<\cdots <i_{\ell_j} \leq \dim V_j$, listed in decreasing order, so in particular the largest singular value is $\sigma_1(A)\cdots \sigma_{\ell_j}(A)$ and the second-largest singular value is $\sigma_1(A)\cdots \sigma_{\ell_j-1}(A)\sigma_{\ell_j+1}(A)$ if $\ell_j<\dim V_j$ and zero otherwise. It follows directly that 
\begin{align*}\lambda_1(B^{(j)},\mu) &=  \sum_{i=1}^{\ell_j} \lambda_i(A^{(j)},\mu),\\
\lambda_2(B^{(j)},\mu) &= \left(\sum_{i=1}^{\ell_j-1} \lambda_i(A^{(j)},\mu)\right) + \lambda_{\ell_j+1}(A^{(j)},\mu)\end{align*}
for each $j=1,\ldots,k$, where $\lambda_{1+\dim V_j }(A^{(j)},\mu)$ is interpreted as being equal to $-\infty$. By the maximality of each $\ell_j$ we have $ \lambda_{\ell_j+1}(A^{(j)},\mu)< \lambda_{\ell_j}(A^{(j)},\mu)$ for each $j=1,\ldots,k$ and consequently $\lambda_1(B^{(j)},\mu)>\lambda_2(B^{(j)},\mu)$ for every $j=1,\ldots,k$ so that every $(B_1^{(j)},\ldots,B_N^{(j)})$ has simple top Lyapunov exponent with respect to $\mu$. Define $\gamma_j:=\beta_j/\ell_j$ for each $j=1,\ldots,k$. We claim that $\mu$ is an equilibrium state for the potential $\Psi \colon \Sigma_N^* \to (0,+\infty)$ defined by
\[\Psi(\iii):=\prod_{j=1}^k \left\|{B}_\iii^{(j)}\right\|^{\gamma_j}.\]
We have
\begin{align*}\Psi(\iii):=\prod_{j=1}^k \left\|B_\iii^{(j)}\right\|^{\gamma_j}&=\prod_{j=1}^k \sigma_1\left(B_\iii^{(j)}\right)^{\gamma_j}\\
&=\prod_{j=1}^k \left(\sigma_1\left(A_\iii^{(j)}\right)\cdots  \sigma_{\ell_j}\left(A_\iii^{(j)}\right)\right)^{\frac{\beta_j}{\ell_j}}\leq \prod_{j=1}^k \sigma_1\left(A_\iii^{(j)}\right)^{\beta_j}=\Phi(\iii)\end{align*}
for every $\iii \in \Sigma_N^*$ so in particular $P(\Psi) \leq P(\Phi)$. Thus
\begin{align*}P(\Psi) \geq h(\mu)+\Lambda(\Psi,\mu) &= h(\mu) + \sum_{j=1}^k \gamma_j\lambda_1(B^{(j)},\mu)\\
& = h(\mu) + \sum_{j=1}^k \frac{\beta_j}{\ell_j} \sum_{i=1}^{\ell_j}\lambda_i(A^{(j)},\mu)\\
& = h(\mu) + \sum_{j=1}^k \beta_j \lambda_1(A^{(j)},\mu)\\
&=  h(\mu)+\Lambda(\Phi,\mu)=P(\Phi)\geq P(\Psi)\end{align*}
where we have used the definition of $\ell_j$ in the third equation, and $\mu$ is an equilibrium state of $\Psi$ as claimed.\end{proof}

\begin{proof}[Proof of Theorem \ref{th:reducks}]
By Proposition \ref{pr:ittstick} we may assume without loss of generality that there exist finite-dimensional real vector spaces $U_j$, tuples of linear maps $(B_1^{(j)},\ldots,B_N^{(j)}) \in \GL(U_j)^N$ each having simple top Lyapunov exponent with respect to $\mu$, and real numbers $\gamma_j>0$ for each $j=1,\ldots,k$ such that $\mu$ is an ergodic equilibrium state of the potential 
\[\Psi(\iii):=\prod_{j=1}^k \left\|B_\iii^{(j)}\right\|^{\gamma_j}.\]
By Theorem \ref{th:bomo2} there exist integers $r_1,\ldots,r_k \geq 1$ such that in a suitable basis for each $U_j$ we may write
\[B_i^{(j)} = \begin{pmatrix} B_i^{(j,1)} & *& \cdots & * & * \\ 
0 & B_i^{(j,2)}& \cdots & * & * \\ 
\vdots & \vdots & \ddots   & \vdots & \vdots \\
 0 & 0 &\cdots & B_i^{(j,r_j-1)} & * \\
  0 & 0 & \cdots&0 & B_i^{(j,r_j)}
\end{pmatrix}\]
for every $i=1,\ldots,N$, where $(B_1^{(j,t)},\ldots,B_N^{(j,t)}) \in \GL_{d_{j,t}}(\mathbb{R})^N$ is irreducible for every $t=1,\ldots,r_j$, and where for some choice of integers $t_1,\ldots,t_k$ satisfying $1 \leq t_j \leq r_j$ for every $j=1,\ldots,k$ the measure $\mu$ is an equilibrium state for the potential 
\[\Phi(\iii):=\prod_{j=1}^k \left\|B_\iii^{(j,t_j)}\right\|^{\gamma_j}\]
and addtionally $P(\Phi)=P(\Psi)$. Define $V_j:=\mathbb{R}^{d_{j,t_j}}$, $(A_1^{(j)},\ldots,A_N^{(j)}):=(B_1^{(j,t_j)},\ldots,B_N^{(j,t_j)}) \in \GL(V_j)^N$ and $\beta_j:=\gamma_j$ for every $j=1,\ldots,k$. To complete the proof of the theorem we must verify that every $(B_1^{(j,t_j)},\ldots,B_N^{(j,t_j)})$ has simple top Lyapunov exponent with respect to $\mu$. We will show that $(B_1^{(j,t_j)},\ldots,B_N^{(j,t_j)})$ inherits this property from $(B_1^{(j)},\ldots,B_N^{(j)})$.

To this end we first claim that $\lambda_1(B^{(j,t_j)},\mu)=\lambda_1(B^{(j)},\mu)$ for every $j=1,\ldots,k$. On the one hand we have by definition
\begin{align*}\lambda_1(B^{(j,t_j)},\mu) &= \lim_{n \to \infty} \frac{1}{n}\int_{\Sigma_N} \log \left\|B_{x|_n}^{(j,t_j)}\right\| \,d\mu(x)\\
& \leq  \lim_{n \to \infty} \frac{1}{n}\int_{\Sigma_N} \log \left\|B_{x|_n}^{(j)}\right\| \,d\mu(x) =\lambda_1(B^{(j)},\mu)  \end{align*}
so that $\lambda_1(B^{(j,t_j)},\mu)\leq \lambda_1(B^{(j)},\mu)$ for every $j=1,\ldots,k$. On the other hand we may apply this estimate to obtain
\begin{align*}P(\Psi) =P(\Phi) &= h(\mu) + \Lambda(\Phi,\mu)\\
& = h(\mu) + \sum_{j=1}^k \gamma_j \lambda_1(B^{(j,t_j)},\mu)\\
& \leq h(\mu) + \sum_{j=1}^k \gamma_j \lambda_1(B^{(j)},\mu)= h(\mu) + \Lambda(\Psi,\mu) = P(\Psi)\end{align*}
and therefore $\sum_{j=1}^k \gamma_j \lambda_1(B^{(j,t_j)},\mu)= \sum_{j=1}^k \gamma_j \lambda_1(B^{(j)},\mu)$. It follows that
\[\sum_{j=1}^k \gamma_j \left(\lambda_1(B^{(j)},\mu)- \lambda_1(B^{(j,t_j)},\mu)\right) \]
is a sum of non-negative terms which is equal to zero, so all of the summands must be zero and therefore $\lambda_1(B^{(j,t_j)},\mu)=\lambda_1(B^{(j)},\mu)$ for every $j=1,\ldots,k$ as claimed.

We secondly claim that
\[\lambda_1(B^{(j,t_j)},\mu)+ \lambda_2(B^{(j,t_j)},\mu)\leq \lambda_1(B^{(j)},\mu)+\lambda_2(B^{(j)},\mu)\]
for every $j=1,\ldots,k$. Indeed, for fixed $j$ we have for $\mu$-a.e. $x \in \Sigma_N$
\[\lambda_1(B^{(j,t_j)},\mu)+ \lambda_2(B^{(j,t_j)},\mu) =\lambda_1\left(\left(B^{(j,t_j)}\right)^{\wedge 2},\mu\right)= \limsup_{n \to \infty} \frac{1}{n}\log \rho\left(\left(B^{(j,t_j)}_{x|_n}\right)^{\wedge 2}\right)\]
and
\[\lambda_1(B^{(j)},\mu)+ \lambda_2(B^{(j)},\mu) =\lambda_1\left(\left(B^{(j)}\right)^{\wedge 2},\mu\right)= \limsup_{n \to \infty} \frac{1}{n}\log \rho\left(\left(B^{(j)}_{x|_n}\right)^{\wedge 2}\right)\]
by Proposition \ref{pr:motwelve}. For every $x$ and $n$ the quantity $\rho((B^{(j,t_j)}_{x|_n})^{\wedge 2})$ is the product of the absolute values of the two largest eigenvalues of $B^{(j,t_j)}_{x|_n}$ and similarly $\rho((B^{(j)}_{x|_n})^{\wedge 2})$ is the product of the absolute values of the two largest eigenvalues of $B^{(j)}_{x|_n}$. But the set of eigenvalues of $B^{(j,t_j)}_{x|_n}$ is a subset of the set of eigenvalues of $B^{(j)}_{x|_n}$ so we have 
\[\limsup_{n \to \infty} \frac{1}{n}\log \rho\left(\left(B^{(j,t_j)}_{x|_n}\right)^{\wedge 2}\right)   \leq \limsup_{n \to \infty} \frac{1}{n}\log \rho\left(\left(B^{(j)}_{x|_n}\right)^{\wedge 2}\right) \]
for all $x \in \Sigma_N$. The claim follows.

We may now show that every $(B_1^{(j,t_j)},\ldots,B_N^{(j,t_j)})$ has simple top Lyapunov exponent with respect to $\mu$. Combining the two claims we deduce that
\[\lambda_2(B^{(j,t_j)},\mu) \leq \lambda_2(B^{(j)},\mu)\]
and consequently
\[\lambda_2(B^{(j,t_j)},\mu) \leq \lambda_2(B^{(j)},\mu)< \lambda_1(B^{(j)},\mu) = \lambda_1(B^{(j,t_j)},\mu)\]
for every $j=1,\ldots,k$ so that every $(B_1^{(j,t_j)},\ldots,B_N^{(j,t_j)})$ has simple top Lyapunov exponent with respect to $\mu$ as required. The proof of the theorem is complete.
\end{proof}




\section{Generalised matrix equilibrium states in the case of simple top Lyapunov exponents}\label{se:pf2}

The result of the previous section shows that every ergodic generalised matrix equilibrium state $\mu$ can be assumed without loss of generality to be generated by a potential $\Phi(\iii):=\prod_{j=1}^k\|A_\iii^{(j)}\|^{\beta_j}$ defined in terms of tuples $(A_1^{(j)},\ldots,A_N^{(j)}) \in \GL(V_j)$ which are all irreducible and have simple top Lyapunov exponent with respect to $\mu$. By Theorem \ref{th:bomo} irreducibility and ergodicity together imply that $\mu$ is the equilibrium state of a potential $\Phi_{\mathcal{W}}(\iii):=\max_{(W_j)_{j=1}^k\in \mathcal{W}}\prod_{j=1}^k\|A_\iii^{(j)}|_{W_j}\|^{\beta_j}$ for some transitive subspace class $\mathcal{W}\subseteq \prod_{j=1}^k \Gr(V_j)$. In this section we extend Theorem \ref{th:bomo} by showing that if $\mu$ is totally ergodic then irreducibility and the simplicity of Lyapunov exponents allow us to choose $\mathcal{W}$ so as to additionally be primitive. The same arguments which yield this result incidentally provide a  pivotal technical lemma on simultaneously proximal words which recalls some results of Abels, Margulis and Soifer (\cite{AbMaSo95}, for related results see also \cite[\S6]{BeQu16}), although in those works an algebraic rather than analytic method is used. In this section we prove:
\begin{theorem}\label{th:prox-unique}
Let $k \geq 1$ and $N \geq 2$. For each $j=1,\ldots,k$ let $V_j$ be a finite-dimensional real vector space, $(A_1^{(j)},\ldots,A_N^{(j)}) \in \GL(V_j)^N$ an irreducible $N$-tuple and $\beta_j>0$ a real number. For each $j=1,\ldots,k$ let $\ell_j \in \{1,\ldots,\dim V_j\}$ be the smallest dimension of any nonzero linear subspace of $V_j$ which has finite orbit under the action of $(A_1^{(j)},\ldots,A_N^{(j)})$. Suppose that $\mu \in \mathcal{M}_\sigma(\Sigma_N)$ is an ergodic equilibrium state of the potential $\Phi\colon \Sigma_N^* \to (0,+\infty)$ defined by
\[\Phi(\iii):=\prod_{j=1}^k \left\|A_\iii^{(j)}\right\|^{\beta_j}\]
such that for every $j=1,\ldots,k$ the tuple $(A_1^{(j)},\ldots,A_N^{(j)})$ has a simple top Lyapunov exponent with respect to $\mu$. Then:
\begin{enumerate}[(i)]
\item
There exists a \emph{unique} transitive subspace class $\mathcal{W}\subseteq \prod_{j=1}^k \Gr_{\ell_j}(V_j)$ such that $\mu$ is an equilibrium state of the potential $\Phi_{\mathcal{W}} \colon \Sigma_N^* \to (0,+\infty)$ defined by
\[\Phi_{\mathcal{W}}(\iii):=\max_{(W_j)_{j=1}^k \in \mathcal{W}} \prod_{j=1}^k \left\|A_\iii^{(j)}|_{W_j}\right\|^{\beta_j}.\]
\item
The transitive subspace class $\mathcal{W}$ defined in (i) has the following additional property: there exist $\kkk \in \Sigma_N^*$ and $(W_j^0)_{j=1}^k \in \mathcal{W}$ such that for every $j=1,\ldots,k$ we have $A_\kkk^{(j)}W_j^0=W_j^0$ and $A_\kkk^{(j)}|_{W_j^0} \in \prox(W_j^0)$.
\item
If $\mu$ is totally ergodic then the transitive subspace class $\mathcal{W}$ defined in (i) is primitive.
\end{enumerate}
\end{theorem}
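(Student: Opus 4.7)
The plan is to prove (ii) first, deduce (i) from it using the two-sided Gibbs bound of Theorem~\ref{th:bomo}(i), and finally obtain (iii) by exploiting a cyclic decomposition of $\mathcal{W}$ to construct a non-trivial $\mathbb{Z}/p\mathbb{Z}$-valued factor of $(\Sigma_N,\sigma,\mu)$, contradicting total ergodicity. The key preliminary is a simultaneous proximality lemma: there exists $\iii_0\in\Sigma_N^*$ with $A_{\iii_0}^{(j)}\in\prox(V_j)$ for every $j=1,\ldots,k$. For fixed $j$, the gap $\lambda_1(A^{(j)},\mu)>\lambda_2(A^{(j)},\mu)$ combined with Proposition~\ref{pr:motwelve} applied to $(A^{(j)})$ and $(A^{(j)})^{\wedge 2}$, together with the subadditive ergodic theorem, produces infinitely many $n$ with $\rho(A_{x|_n}^{(j)})^2>\sigma_1(A_{x|_n}^{(j)})\sigma_2(A_{x|_n}^{(j)})$ for $\mu$-a.e.\ $x$, and the criterion in \S\ref{sss:prox} converts such $n$ into proximal words. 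Joint proximality is then obtained by an Abels--Margulis--Soifer-style ping-pong: if each $\iii^{(j)}$ is proximal in coordinate $j$, then for $m$ large the concatenation $(\iii^{(1)})^m\cdots(\iii^{(k)})^m$ is proximal in every coordinate, since a high power of a proximal map collapses onto a rank-one limit which dominates any bounded prefix or suffix.

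With $\iii_0$ in hand, I prove (ii) by fixing any $(W_j^0)_{j=1}^k\in\mathcal{W}$ furnished by Theorem~\ref{th:bomo}(ii) and using finiteness of $\mathcal{W}$ together with transitivity to find, by pigeonhole, a word $\jjj$ satisfying $A_\jjj^{(j)}W_j^0=W_j^0$ for every $j$. Then $\kkk:=\iii_0^m\jjj\iii_0^m$ for $m$ sufficiently large fixes each $W_j^0$ and remains simultaneously proximal; the Gibbs inequality of Theorem~\ref{th:bomo}(i) applied to the iterates $\kkk^n$ forces $\|A_\kkk^{(j)}|_{W_j^0}\|$ to capture the full growth rate $\rho(A_\kkk^{(j)})$, placing the top eigenspace of $A_\kkk^{(j)}$ inside $W_j^0$ and hence yielding $A_\kkk^{(j)}|_{W_j^0}\in\prox(W_j^0)$. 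For (i), if $\mathcal{W}'$ is a second transitive subspace class in $\prod_j\Gr_{\ell_j}(V_j)$ yielding $\mu$ as an equilibrium state of $\Phi_{\mathcal{W}'}$, Theorem~\ref{th:bomo}(i) makes $\Phi_\mathcal{W}$ and $\Phi_{\mathcal{W}'}$ uniformly multiplicatively comparable; evaluated on the iterates $\kkk^n$ and combined with the spectral-radius limit $\|A^n|_W\|^{1/n}\to\rho(A|_W)$ on invariant subspaces, this identifies the top eigenspace of each $A_\kkk^{(j)}$ inside some element of $\mathcal{W}'$. Equivariance of $\mathcal{W}'$ together with transitivity of $\mathcal{W}$ then yields $\mathcal{W}\subseteq\mathcal{W}'$, and the symmetric argument closes the proof of (i).

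Part (iii) is where I anticipate the main difficulty. Assume $\mathcal{W}$ is not primitive, so that Perron--Frobenius theory applied to its transition matrix yields a cyclic decomposition $\mathcal{W}=\bigsqcup_{r=0}^{p-1}\mathcal{W}_r$ with every $A_i^{(j)}$ sending $\mathcal{W}_r$ into $\mathcal{W}_{r+1\bmod p}$. Consider the skew product $S\colon\Sigma_N\times\mathcal{W}\to\Sigma_N\times\mathcal{W}$ defined by $S(x,(W_j))=(\sigma x,(A_{x_1}^{(j)}W_j)_j)$, well-defined by equivariance, together with the residue map $\tau\colon\mathcal{W}\to\mathbb{Z}/p\mathbb{Z}$, $\tau(\mathcal{W}_s):=s$, which satisfies $\tau\circ\pi_2\circ S=\tau\circ\pi_2+1\bmod p$. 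The strategy is to construct a $\mu$-a.s.\ measurable section $\Sigma_N\to\mathcal{W}$, $x\mapsto(W_j(x))_j$, intertwining $\sigma$ with $S$, i.e.\ $W_j(\sigma x)=A_{x_1}^{(j)}W_j(x)$ for every $j$; the function $\phi(x):=\tau((W_j(x))_j)$ then satisfies $\phi\circ\sigma=\phi+1\bmod p$. Transitivity of $\mathcal{W}$ forces $\phi$ to take all $p$ values with $\mu$-positive mass, so $e^{2\pi i\phi/p}$ is a non-constant $\sigma^p$-invariant $L^2$ function on $\Sigma_N$, contradicting total ergodicity. The main obstacle is the construction of this section: the natural candidate at word level is the maximising tuple $(W_j(\iii))_j\in\mathcal{W}$ in the definition of $\Phi_\mathcal{W}(\iii)$, and its $\mu$-a.s.\ uniqueness together with stabilisation as $|\iii|\to\infty$ must be established by combining the uniqueness from (i), the Gibbs bound of Theorem~\ref{th:bomo}(i), and an Oseledets-type analysis of the cocycles $(A^{(j)})$ that exploits the simple-top-Lyapunov-exponent hypothesis to exclude degenerate ties among elements of $\mathcal{W}$.
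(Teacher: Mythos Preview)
Your overall architecture is sound and your identification of the measurable equivariant section as the crux of (iii) is exactly right, but two concrete steps fail as written.

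The ping-pong for simultaneous proximality is incorrect: take $N=k=2$ with $A_1^{(1)}=\mathrm{diag}(2,1)$, $A_2^{(1)}$ a quarter-turn, and the roles reversed in coordinate~$2$. Then $\iii^{(1)}=1$, $\iii^{(2)}=2$ are proximal in their own coordinates, yet in coordinate~$1$ the product $(A_1^{(1)})^m(A_2^{(1)})^m=\mathrm{diag}(2^m,1)R^m$ has purely imaginary eigenvalues for $m\equiv 1\bmod 4$. The paper instead tensors: with $B_i:=\bigotimes_j A_i^{(j)}$ one has $\rho(B_\iii)=\prod_j\rho(A_\iii^{(j)})$ and $\|B_\iii\|=\prod_j\|A_\iii^{(j)}\|$, so a single application of Proposition~\ref{pr:motwelve} to $B$ yields a subsequence along which the nonpositive sum $\sum_j(\tfrac1n\log\rho(A_{x|_n}^{(j)})-\tfrac1n\log\|A_{x|_n}^{(j)}\|)$ tends to $0$; hence so does its minimum term, and the Lyapunov gap delivers one word proximal in every coordinate (Lemma~\ref{le:poximal1}). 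More seriously, the section in (iii) cannot live on $\Sigma_N$ with your equivariance: under the convention $A_\iii=A_{i_1}\cdots A_{i_n}$ the maximiser in $\Phi_{\mathcal W}(x|_n)$ is governed by the \emph{tail} of $x|_n$ (already for $A_1=\mathrm{diag}(2,1)$, $A_2$ the coordinate swap, the maximiser is fixed by $x_n$), so it does not stabilise. The paper passes to $\hat\Sigma_N$ and builds $\mathsf{U}_j(x)$ from the \emph{past}: it is the unique $U_j^i$ on which $\|A_{(\hat\sigma^{-n}x)|_n}^{(j)}\|$ grows at rate $\lambda_1$, and the equivariance one actually gets is $\mathsf{U}_j(\hat\sigma^{-1}x)=A_{x_0}^{(j)}\mathsf{U}_j(x)$, yielding $\phi\circ\hat\sigma=\phi-1\bmod p$, which contradicts total ergodicity equally well. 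A structural input you never invoke but which underlies the whole construction is Lemma~\ref{le:poximal2}: one simultaneously proximal word forces $V_j=\bigoplus_{i=1}^{r_j}U_j^i$ with exactly $r_j=\dim V_j/\ell_j$ summands, and after making this splitting orthogonal the restricted norms $\|A_\iii^{(j)}|_{U_j^i}\|$ become genuine singular values of $A_\iii^{(j)}$. This is what makes the Oseledets-type construction of $\mathsf{U}_j$ go through, and it also drives (ii) directly --- the paper reuses the almost-everywhere statement of Lemma~\ref{le:poximal1} together with $\mathsf{U}_j$ to find $\kkk$ with $\rho(A_\kkk^{(j)})>\|A_\kkk^{(j)}|_{\mathsf{U}_j(\hat\sigma^n x)^\perp}\|$, pinning the leading eigenvector inside $W_j^0$ without a Gibbs argument. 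Your Gibbs-comparison route to (i), on the other hand, is genuinely different from the paper's $\Lambda$-comparison and does work: once some $(W_j')\in\mathcal W'$ is shown to contain the top eigenspace of $A_\kkk^{(j)}$ for each $j$, the intersection $W_j^0\cap W_j'$ is nonzero with finite orbit and dimension at most $\ell_j$, hence equals both by minimality of $\ell_j$.
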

Theorem \ref{th:prox-unique}(i) is not used in this article, but is provided for interest. This result contrasts strongly with the situation in which simple top Lyapunov exponents are not assumed, where it can be the case that uncountably many choices of $\mathcal{W}$ exist which all have the same equilibrium state: a simple example of this, as mentioned in \cite{BoMo18}, is the case in which $k=1$, $V=\mathbb{R}^2$ and every $A_i$ is a rational rotation matrix, in which case $\mathcal{W}$ can be taken to be the orbit of any one-dimensional subspace whatsoever. Theorem \ref{th:prox-unique}(ii) and (iii) both contribute to the proof of Theorem \ref{th:main}; while the former has more of the character of a lemma than of a main result, we include it in Theorem \ref{th:prox-unique} since it emerges directly from the proof of the other clauses of the theorem. 

The proof of Theorem \ref{th:prox-unique} is heavily inclined towards ergodic theory, unlike the results of the following section which are essentially algebraic. The essential idea of the proof is as follows. The separation of Lyapunov exponents allows us fairly easily to construct words $\jjj \in \Sigma_N^*$ such that $A_\jjj^{(j)} \in \prox(V_j)$ for all $j=1,\ldots,k$, and the existence of these words implies that each $V_j$ can be written as a splitting $V_j = \bigoplus_{i=1}^{r_j} U_j^{i}$ into $\ell_j$-dimensional spaces in such a way that for each $j$ the spaces $U_j^1,\ldots,U_j^{r_j}$ are permuted by the linear maps $A_i^{(j)}$. By replacing the inner product on each $V_i$ if necessary this splitting can without loss of generality be taken to be orthogonal and it follows that the norm of the restriction of each product $A_\iii^{(j)}$ to each of the various spaces $U_j^i$ is a singular value of $A_\iii^{(j)}$. Together with the separation of Lyapunov exponents this implies that for each $j=1,\ldots,k$ the growth of $\|A_{x|_n}^{(j)}\|$ for almost every $x$ is concentrated on a single one of the subspaces $U_j^i$, which in general will depend on both $x$ and $n$. Using this observation we construct measurable functions $\mathsf{U}_j \colon \hat\Sigma_N \to \{U_j^1,\ldots,U_j^{r_j}\}$ which are equivariant with respect to the action of $A_{x|_n}^{(j)}$ and capture the maximal growth of each $\|A_{x|_n}^{(j)}\|$ in the sense that
\begin{align*}\lim_{n \to \infty}\frac{1}{n}\log\left\|A_{x|_n}^{(j)}|_{\mathsf{U}_j(\hat\sigma^nx)}\right\| &= \lambda_1(A^{(j)},\mu),\\\nonumber
\lim_{n \to \infty}\frac{1}{n}\log\left\|A_{x|_n}^{(j)}|_{\mathsf{U}_j(\hat\sigma^nx)^\perp}\right\| &\leq \lambda_2(A^{(j)},\mu)\end{align*}
for $\hat\mu$-a.e. $x \in \hat\Sigma_N$. (These functions may be thought of as resembling Oseledets spaces, although they do not always correspond precisely to Oseledets spaces and it is not clear whether they are necessarily given by direct sums of Oseledets spaces either.) The ergodicity of $\hat\mu$ implies that the tuple $(\mathsf{U}_j(x))_{j=1}^k$ belongs almost everywhere to a single transitive subspace class $\mathcal{W}$; if $\hat\mu$ is also totally ergodic, this can be applied to show that for every integer $n \geq 1$, every two values taken by the tuple $(\mathsf{U}_j(x))_{j=1}^k$ on sets of positive measure can be linked by a word $\iii$ with length divisible by $n$. This allows us to show that 
\[\left\{ (A_\iii^{(j)}W_j)_{j=1}^k \colon \iii \in \Sigma_N^*\text{ and }n\text{ divides }|\iii|\right\} =\mathcal{W}\]
for every $(W_j)_{j=1}^k \in \mathcal{W}$ and $n \geq 1$, and this is sufficient to deduce that $\mathcal{W}$ is primitive in the totally ergodic case. 

Before beginning the proof we require three lemmas, one of which is primarily ergodic-theoretic in character, one primarily algebraic and one somewhat more combinatorial; these respectively treat the existence of proximal elements, the algebraic consequences of their existence for splittings of each $V_j$, and the criterion for primitivity just mentioned. 
\begin{lemma}\label{le:poximal1}
Let $k \geq 1$ and $N \geq 2$ and let $\mu \in \mathcal{M}_\sigma(\Sigma_N)$ be ergodic. For each $j=1,\ldots,k$ let $V_j$ be a finite-dimensional real inner product space and $(A_1^{(j)},\ldots,A_N^{(j)}) \in \GL(V_j)^N$ a tuple which has simple top Lyapunov exponent with respect to $\mu$. Then for $\hat\mu$-a.e. $x \in \hat\Sigma_N$ we have
\begin{equation}\label{eq:turdly}\limsup_{n \to \infty}\min_{1 \leq j \leq k} \left(\frac{1}{n}\log\rho\left(A_{x|_n}^{(j)}\right) -\frac{1}{2n}\log \sigma_1\left(A_{x|_n}^{(j)}\right)\sigma_2\left(A_{x|_n}^{(j)}\right)\right)>0.\end{equation}
Furthermore there exists $\kkk \in \Sigma_N^*$ such that for every $j=1,\ldots,k$ we have  $A_\kkk^{(j)} \in \prox(V_j)$.
\end{lemma}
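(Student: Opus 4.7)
The plan is to reduce the simultaneous statement across the index $j$ to a single tensor-product cocycle, then extract a subsequence of $n$ along which all the per-$j$ quantities achieve their upper bounds at once.

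First, by Kingman's subadditive ergodic theorem applied to the norm cocycles and to their second exterior powers, for $\hat\mu$-a.e.\ $x \in \hat\Sigma_N$ and every $j$ we have
\[\lim_{n\to\infty}\frac{1}{n}\log\left\|A_{x|_n}^{(j)}\right\|=\lambda_1(A^{(j)},\mu), \qquad \lim_{n\to\infty}\frac{1}{n}\log\sigma_1\!\left(A_{x|_n}^{(j)}\right)\sigma_2\!\left(A_{x|_n}^{(j)}\right)=\lambda_1(A^{(j)},\mu)+\lambda_2(A^{(j)},\mu).\]
In particular, since $\rho(A)\leq\|A\|$, one automatically has $\limsup_{n\to\infty}\tfrac{1}{n}\log\rho(A_{x|_n}^{(j)}) \leq \lambda_1(A^{(j)},\mu)$ for each $j$ individually.

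Next, apply Proposition~\ref{pr:motwelve} to the tensor product tuple $B_i:=\bigotimes_{j=1}^k A_i^{(j)}\in\GL\bigl(\bigotimes_{j=1}^k V_j\bigr)$. Using the multiplicativity identities from \S\ref{mango}, we have $\rho(B_\iii)=\prod_{j=1}^k\rho(A_\iii^{(j)})$ and $\|B_\iii\|=\prod_{j=1}^k\|A_\iii^{(j)}\|$, so that $\lambda_1(B,\mu)=\sum_{j=1}^k\lambda_1(A^{(j)},\mu)$. Proposition~\ref{pr:motwelve} then yields, for $\hat\mu$-a.e.\ $x$, a sequence $n_m\to\infty$ along which
\[\sum_{j=1}^k\frac{1}{n_m}\log\rho\!\left(A_{x|_{n_m}}^{(j)}\right)=\frac{1}{n_m}\log\rho(B_{x|_{n_m}})\longrightarrow\sum_{j=1}^k\lambda_1(A^{(j)},\mu).\]
Since each summand satisfies $\tfrac{1}{n}\log\rho(A_{x|_n}^{(j)})\leq \lambda_1(A^{(j)},\mu)+o(1)$, a squeeze argument forces $\tfrac{1}{n_m}\log\rho(A_{x|_{n_m}}^{(j)})\to\lambda_1(A^{(j)},\mu)$ for every single $j$ along the \emph{same} subsequence $n_m$. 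This is the key step: it packages the per-coordinate convergence statements of Proposition~\ref{pr:motwelve} into one simultaneous subsequence, and it is where the ergodicity hypothesis is really used.

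Combining these with the singular value limit, along $n_m$ we obtain for every $j$
\[\frac{1}{n_m}\log\rho\!\left(A_{x|_{n_m}}^{(j)}\right)-\frac{1}{2n_m}\log\sigma_1\!\left(A_{x|_{n_m}}^{(j)}\right)\sigma_2\!\left(A_{x|_{n_m}}^{(j)}\right)\longrightarrow \tfrac{1}{2}\bigl(\lambda_1(A^{(j)},\mu)-\lambda_2(A^{(j)},\mu)\bigr),\]
which is strictly positive by the simple top Lyapunov exponent assumption. Taking the minimum over the finitely many indices $j$ and passing to $\limsup$ in $n$ yields the displayed inequality~\eqref{eq:turdly}.

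For the second conclusion, fix any $x$ in the full measure set above and any $n$ (taken from the subsequence $n_m$) large enough that the minimum in \eqref{eq:turdly} is already positive; set $\kkk:=x|_n$. Then for every $j=1,\ldots,k$ we have $\rho(A_\kkk^{(j)})^2>\sigma_1(A_\kkk^{(j)})\sigma_2(A_\kkk^{(j)})$, so the Gelfand-formula criterion noted at the end of \S\ref{sss:prox} gives $A_\kkk^{(j)}\in\prox(V_j)$ for all $j$ simultaneously. The main conceptual obstacle, as noted above, is the passage from a per-$j$ $\limsup$ statement to a simultaneous one; this is resolved by the tensor-product device, which trades a minimum over $j$ for a product that can be addressed by a single application of Proposition~\ref{pr:motwelve}.
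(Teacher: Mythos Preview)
Your proof is correct and follows essentially the same approach as the paper: both arguments use the tensor product $B_i=\bigotimes_j A_i^{(j)}$ together with Proposition~\ref{pr:motwelve} to force the per-$j$ spectral radii to achieve $\lambda_1(A^{(j)},\mu)$ simultaneously along a subsequence, then combine this with the subadditive ergodic theorem for $\sigma_1\sigma_2$ and the proximality criterion of \S\ref{sss:prox}. The paper phrases the squeeze step slightly differently---it bounds the sum $\sum_j(\tfrac{1}{n}\log\rho-\tfrac{1}{n}\log\|\cdot\|)$ by zero from above and then passes to the minimum---but this is just a reorganisation of the same idea you used.
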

\begin{proof}
The inequality \eqref{eq:turdly} implies that we may find $\kkk:=x|_n \in \Sigma_N^*$ such that $\rho(A_\kkk^{(j)})^2>\sigma_1(A_\kkk^{(j)})\sigma_2(A_\kkk^{(j)})$ for all $j=1,\ldots,k$ and as noted in \S\ref{sss:prox} this yields $A_\kkk^{(j)} \in \prox(V_j)$ for every $j=1,\ldots,k$. To prove the lemma it is thus sufficient to show that \eqref{eq:turdly} holds for $\hat\mu$-a.e. $x \in \hat\Sigma_N$ and this clearly follows if the same result is shown for $\mu$-a.e. $x\in \Sigma_N$. We therefore proceed to prove the latter statement.

We first claim that for $\mu$-a.e. $x \in \Sigma_N$
\begin{equation}\label{eq:stummy-beige}\limsup_{n \to \infty}  \sum_{j=1}^k \left(\frac{1}{n}\log \rho\left(A_{x|_n}^{(j)}\right)-\frac{1}{n}\log\left\|A_{x|_n}^{(j)}\right\|\right)=0.\end{equation}
To prove this let $V:=\bigotimes_{j=1}^k V_j$ be the tensor product of the vector spaces $V_j$ and define a tuple $(B_1,\ldots,B_N) \in \GL(V)^N$ by $B_i:=\bigotimes_{j=1}^k A_i^{(j)}$ for each $i=1,\ldots,N$. We equip $V$ with the inner product induced by the inner products on the spaces $V_j$. As noted in \S\ref{sss:la} we have $\|B_\iii\|=\prod_{j=1}^k \|A_\iii^{(j)}\|$ for all $\iii \in \Sigma_N^*$ with respect to the corresponding inner product norms and also $\rho(B_\iii)=\prod_{j=1}^k \rho(A_\iii^{(j)})$ for all $\iii \in \Sigma_N^*$. It follows from the first statement in particular that  $\lambda_1(B,\mu)=\sum_{j=1}^k \lambda_1(A^{(j)},\mu)$. Since by the subadditive ergodic theorem
\[ \lim_{n \to \infty} \sum_{j=1}^k \frac{1}{n}\log\left\|A_{x|_n}^{(j)}\right\|= \lim_{n \to \infty} \frac{1}{n}\log \left\|B_{x|_n}\right\| = \lambda_1(B,\mu)\]
for $\mu$-a.e. $x \in \Sigma_N$, and by Proposition \ref{pr:motwelve}
\[\limsup_{n \to \infty}  \sum_{j=1}^k \frac{1}{n}\log \rho\left(A_{x|_n}^{(j)}\right) = \limsup_{n \to \infty} \frac{1}{n}\log \rho\left(B_{x|_n}\right) = \lambda_1(B,\mu)\]
for $\mu$-a.e. $x \in \Sigma_N$, the claimed result \eqref{eq:stummy-beige} follows. Now, each of the summands $\frac{1}{n}\log \rho\left(A_{x|_n}^{(j)}\right)-\frac{1}{n}\log\left\|A_{x|_n}^{(j)}\right\|$ in \eqref{eq:stummy-beige} is non-positive since clearly $\rho(A_\iii^{(j)}) \leq \|A_\iii^{(j)}\|$ for every $\iii \in \Sigma_N^*$, and it follows from this observation that for $\mu$-a.e. $x \in \Sigma_N$
\begin{equation}\label{eq:ronching-blue}\limsup_{n \to \infty}  \min_{1 \leq j \leq k}  \left(\frac{1}{n}\log \rho\left(A_{x|_n}^{(j)}\right)-\frac{1}{n}\log\left\|A_{x|_n}^{(j)}\right\|\right)=0\end{equation}
since this sequence is bounded below by the sequence in \eqref{eq:stummy-beige} and is bounded above by $0$. But for $\mu$-a.e. $x \in \Sigma_N$ we have
\begin{eqnarray}\label{eq:stanky-bean}{\lefteqn{\lim_{n \to \infty} \min_{1 \leq j \leq k}\left(\frac{1}{n}\log\left\|A_{x|_n}^{(j)}\right\| - \frac{1}{2n}\log \sigma_1(A_{x|_n}^{(j)})\sigma_2(A_{x|_n})\right)}}& & \\\nonumber
&=&\min_{1 \leq j\leq k} \left(\frac{1}{2}\lambda_1(A^{(j)},\mu) - \frac{1}{2}\lambda_2(A^{(j)},\mu)\right)>0\end{eqnarray}
because by the subadditive ergodic theorem
\[\lim_{n\to \infty} \frac{1}{n}\log \|A_{x|_n}^{(j)}\| = \lambda_1(A^{(j)},\mu)\]
and
\[\lim_{n \to \infty} \frac{1}{n} \log \sigma_1\left(A_{x|_n}^{(j)}\right)\sigma_2\left(A_{x|_n}^{(j)}\right) = \lambda_1\left(A^{(j)},\mu\right)+\lambda_2\left(A^{(j)},\mu\right)\]
for every $j=1,\ldots,k$, and each of the differences $\lambda_1(A^{(j)},\mu) - \lambda_2(A^{(j)},\mu)$ is positive by the hypothesis of simplicity of the top Lyapunov exponent. Adding the sequences in \eqref{eq:ronching-blue} and \eqref{eq:stanky-bean} gives a sequence which is a lower bound for the left-hand side of \eqref{eq:turdly} and has positive limit superior almost everywhere. This yields \eqref{eq:turdly} for $\mu$-a.e. $x \in \Sigma_N$ as required and the result follows.
\end{proof}
The following algebraic consequence of proximality is perhaps already known but we have been unable to find a reference:
\begin{lemma}\label{le:poximal2}
Let $V$ be a finite-dimensional real vector space, let $(B_1,\ldots,B_N) \in \GL(V)^N$ be irreducible and let $\ell  \in \{1,\ldots,\dim V\}$ be the smallest integer such that there exists an $\ell$-dimensional linear subspace $U \subseteq V$ with finite orbit under the action of $(B_1,\ldots,B_N)$. Suppose that there exists $\kkk \in \Sigma_N^*$ such that $B_\kkk \in \prox(V)$. Then $\dim V = \ell r$ for some integer $r \geq 1$, there are exactly $r$ subspaces $U_1,\ldots,U_r \in \Gr_\ell(V)$ which have finite orbit under the action of $(B_1,\ldots,B_N)$, and these subspaces form a direct sum $\bigoplus_{t=1}^r U_t$ which is equal to $V$. 
\end{lemma}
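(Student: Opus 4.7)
The plan is to prove the conclusions in three stages: pairwise disjointness of orbit-finite $\ell$-subspaces, extraction of a direct sum decomposition of $V$, and uniqueness via proximal elements targeting each summand.

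For pairwise disjointness, given distinct orbit-finite $\ell$-subspaces $U'$ and $U''$, the intersection $U' \cap U''$ has orbit contained in the finite set $\{B_\iii U' \cap B_\iii U'' : \iii \in \Sigma_N^*\}$ because $B_\iii(U' \cap U'') = B_\iii U' \cap B_\iii U''$, and is therefore itself orbit-finite. By minimality of $\ell$ this intersection is either zero or has dimension at least $\ell$; the latter forces $U' = U''$, so $U' \cap U'' = 0$. For the decomposition, let $O$ be the orbit of $U$. Since $\sum_{U' \in O} U'$ is $(B_i)$-invariant and nonzero, irreducibility gives $\sum_{U' \in O} U' = V$. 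A greedy selection through $O$ — admitting a new candidate $U_s$ whenever $U_s$ meets the growing direct sum trivially, and otherwise observing by minimality that $U_s$ lies in the current sum — yields $U_{t_1}, \ldots, U_{t_r} \in O$ with $V = \bigoplus_{i=1}^{r} U_{t_i}$ and hence $r\ell = \dim V$.

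The main obstacle is ruling out any further orbit-finite $\ell$-subspace $U^*$. Suppose such a $U^*$ exists. Applying the intersection argument to $U^* \cap \ker p_i$, where $p_i \colon V \to U_{t_i}$ are the projections from the decomposition, the restriction $p_i|_{U^*}$ is either zero or an isomorphism for every $i$; letting $S = \{i : p_i|_{U^*} \neq 0\}$, we have $|S| \geq 2$ (else $U^* = U_{t_i}$ for some $i$). For a chosen $i_0 \in S$ I plan to construct a proximal element $B^{(i_0)}$ of the semigroup fixing each $U_{t_i}$ and $U^*$ and with $V^+(B^{(i_0)}) \subseteq U_{t_{i_0}}$. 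After replacing $B_\kkk$ by a sufficient power, $B_\kkk$ fixes each orbit-finite subspace at hand, and by pairwise disjointness $V^+(B_\kkk) \subseteq U_{t_1}$ after relabeling. For $i_0 = 1$ take $B^{(1)} = B_\kkk$. For $i_0 \neq 1$ pick $\iii$ with $B_\iii U_{t_1} = U_{t_{i_0}}$ (available since $U_{t_1}$ and $U_{t_{i_0}}$ both lie in $O$) and $\iota$ with $B_\iota B_\iii V^+(B_\kkk) \not\subseteq V^-(B_\kkk)$; the latter exists because the $(B_i)$-invariant span $\sum_{\iota} B_\iota B_\iii V^+(B_\kkk)$ must equal $V$ by irreducibility. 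Then $B_\iii B_\kkk^n B_\iota$ is proximal for large $n$, since its $\lambda^{-n}$-rescaled limit $B_\iii P B_\iota$ (with $P$ the spectral projection of $B_\kkk$) is a non-nilpotent rank-one operator by the choice of $\iota$, and its $V^+$ converges to $B_\iii V^+(B_\kkk) \subseteq U_{t_{i_0}}$; passing to a sufficient power makes the resulting element fix each $U_{t_i}$ and $U^*$, and the disjointness of the sets $\Gr_1(U_{t_j})$ inside $\Gr_1(V)$ forces $V^+(B^{(i_0)}) \subseteq U_{t_{i_0}}$ once $n$ is large enough.

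Given such $B^{(i_0)}$, on $V = \bigoplus U_{t_i}$ it acts as a block-diagonal operator $\mathrm{diag}(C_1, \ldots, C_r)$ where $C_{i_0}$ carries the leading eigenvalue $\lambda^{(i_0)}$ of $B^{(i_0)}$ and every other $C_i$ has all eigenvalues of strictly smaller modulus. Parametrizing $U^*$ as the graph of maps $\phi_i \colon U_{t_{i_0}} \to U_{t_i}$ with $\phi_{i_0} = \mathrm{id}$, $\phi_i$ an isomorphism for $i \in S$, and $\phi_i = 0$ otherwise, the invariance $B^{(i_0)} U^* = U^*$ yields $C_i \phi_i = \phi_i C_{i_0}$ for every $i \in S$. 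This makes $C_i$ similar to $C_{i_0}$ and therefore forces $C_i$ to share the leading eigenvalue $\lambda^{(i_0)}$ for each $i \in S$, in direct contradiction with the spectral strict inequality when $i \neq i_0$. Hence $|S| = 1$, $U^*$ coincides with some $U_{t_i}$, and the orbit-finite $\ell$-subspaces are exactly $U_{t_1}, \ldots, U_{t_r}$, completing the proof of all three conclusions.
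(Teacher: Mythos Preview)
Your proof is correct and follows the same overall architecture as the paper's: first extract a direct-sum decomposition $V=\bigoplus_{i=1}^r U_{t_i}$ from the orbit of $U$ using the minimality of $\ell$, then manufacture proximal semigroup elements whose leading eigenspace sits in a prescribed summand and which fix all the relevant finite-orbit subspaces, and finally derive a contradiction from the existence of an extraneous $U^*$. The two implementations differ in interesting ways. For the targeted proximal elements, the paper simply conjugates: it works in the \emph{group} generated by the $B_i$ and takes $A_t:=B_\iii^{-1}B_\kkk B_\iii$, which immediately has its leading eigenvector in $U_t$ and can then be raised to a power fixing everything; your construction $B_\iii B_\kkk^n B_\iota$ stays inside the \emph{semigroup}, at the cost of the extra irreducibility argument to find $\iota$ and the limiting argument on $V^+$. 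For the final contradiction, the paper argues geometrically: it finds a minimal subcollection $U_1,\ldots,U_m$ whose span meets $\hat U$, shows $\hat U$ can replace $U_m$ in the direct sum, and then observes that the leading eigenvector of the proximal element targeting $U_m$ must land in $\hat U$, forcing $\hat U\cap U_m\neq 0$. Your graph/intertwining argument ($C_i\phi_i=\phi_i C_{i_0}$ forces $C_i$ similar to $C_{i_0}$, contradicting the strict spectral gap) is a clean algebraic alternative. Both routes are valid; the paper's conjugation shortcut is worth knowing since it avoids the limiting analysis entirely.
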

\begin{proof}
Let $U \subseteq V$ be an $\ell$-dimensional subspace with finite orbit under $(B_1,\ldots,B_N)$ and let $\mathcal{U}:=\{B_\iii U \colon \iii \in \Sigma_N^*\}$ denote its orbit. We claim that $(B_1,\ldots,B_N)$ acts transitively on $\mathcal{U}$ in the following sense: for every $U_1,U_2 \in \mathcal{U}$ there exists $\iii \in \Sigma_N^*$ such that $B_\iii U_1 = U_2$.

To see this we argue as follows. For every $\iii \in \Sigma_N^*$ the map $\Gr_\ell(V) \to \Gr_\ell(V)$ defined by $W \mapsto B_\iii W$ is clearly bijective, so in particular each $B_\iii$ induces an injective map from $\{U\} \cup \mathcal{U}$ to $\mathcal{U}$. Since $\mathcal{U}$ is finite this is only possible if $\{U\} \cup \mathcal{U}=\mathcal{U}$ and thus necessarily $U \in \mathcal{U}$. If $\iii \in \Sigma_N^*$ is arbitrary then the map $B_\iii \colon \mathcal{U} \to \mathcal{U}$ is a bijection of a finite set, so in particular some power of that map is the identity permutation. In particular if $\iii \in \Sigma_N^*$ is arbitrary then $B_\iii^pU=U$ for some integer $p \geq 1$ and hence also $B_\iii^{np}U=U$ for every $n \geq 1$. It follows that if $U_1 \in \mathcal{U}$ is arbitrary then since by definition $U_1=B_\jjj U$ for some $\jjj \in \Sigma_N^*$, the set $\{B_\iii U_1 \colon \iii \in \Sigma_N^*\}$ contains $B_\jjj^n U$ for every $n \geq 2$ and therefore contains $U$ and hence contains $\{B_\iii U \colon \iii \in \Sigma_N^*\}$. But the reverse inclusion is obvious, so $\{B_\iii U_1 \colon \iii \in \Sigma_N^*\}$ is equal to $\mathcal{U}$ for every $U_1 \in \mathcal{U}$ and this proves the result claimed.

We next claim that there exist $r \geq 1$ and subspaces $U_1,\ldots,U_r\in \mathcal{U}$ such that $V=\bigoplus_{t=1}^r U_t$. To see this we observe that there exists at least one list of elements $U_1,\ldots,U_m \in \mathcal{U}$ which forms a direct sum, namely the list of length $1$ consisting of the single space $U$. There therefore exists a \emph{maximal} list of elements $U_1,\ldots,U_r \in \mathcal{U}$ which forms a direct sum. If $U' \in \mathcal{U}$ is arbitrary then the subspace $(U_1 \oplus \cdots \oplus U_r) \cap U'$ cannot have dimension zero since then $U_1,\ldots,U_r,U'$ would form a direct sum, contradicting maximality. It also cannot have dimension nonzero but strictly less than $\ell$, since it clearly has finite orbit under the action of $(B_1,\ldots,B_N)$ and such a space must have dimension at least $\ell$. It cannot have dimension strictly greater than $\ell$ since it is a subset of $U'$, so by elimination its dimension is precisely $\ell$ and therefore it is equal to the subspace $U'$. Since $U' \in \mathcal{U}$ was arbitrary this shows that every element of $\mathcal{U}$ is a subspace of $U_1 \oplus \cdots \oplus U_r$. Hence $U_1 \oplus \cdots \oplus U_r = \spann \bigcup_{W \in \mathcal{U}} W$. The latter space is clearly preserved by every $B_i$ and has nonzero dimension, hence equals $V$ by irreducibility. The claim is proved. We deduce in particular that $r\ell=\dim V$. We have proved every part of the lemma except for the claim that $U_1,\ldots,U_r$ are the \emph{only} subspaces of $V$ with dimension $\ell$ and with finite orbit under the action of $(B_1,\ldots,B_N)$.

In pursuit of this result we now claim that for every $t=1,\ldots,r$ there exists a linear map $A_t \in \prox(V)$ which belongs to the group generated by the maps $B_1,\ldots,B_N$, fixes every $U_1,\ldots,U_r$ and whose leading eigenvalue has an eigenvector in $U_t$. By hypothesis there exists $\kkk \in \Sigma_N^*$ such that $B_\kkk$ is proximal. Since $B_\kkk$ induces a permutation of $\mathcal{U}$ it has a power which induces the identity permutation of $\mathcal{U}$ and hence fixes all of the subspaces $U_1,\ldots,U_t$. Without loss of generality we replace $B_\kkk$ with this power, so we have $B_\kkk U_t=U_t$ for every $t=1,\ldots,r$ and also $B_\kkk \in \prox(V)$. The determinant of $(B_\kkk-\lambda I)$ on $V$ is equal to the product of the determinants of the linear maps $(B_\kkk-\lambda I)|_{U_t}$, so if $\lambda$ is the leading eigenvalue of $B_\kkk$ then one of the restrictions $(B_\kkk-\lambda I)|_{U_t}$ must have zero determinant, which is to say there exists an eigenvalue for this eigenvector in the space $U_t$. Consequently there exists an integer $t_0 \in \{1,\ldots,r\}$ such that we may take $A_{t_0}:=B_\kkk$. By the transitivity property demonstrated earlier, for any other $t \in \{1,\ldots,r\}$ we may choose $\iii \in \Sigma_N^*$ such that $B_\iii U_t = U_{t_0}$ and it is clear that the map $A_t:=B_\iii^{-1} B_\kkk B_\iii$ has the desired properties.

It remains to show that $U_1,\ldots,U_r$ are the \emph{only} subspaces of $V$ with finite orbit under the action of $(B_1,\ldots,B_N)$ and with dimension $\ell$. To this end suppose that $\hat{U}$ is an arbitrary $\ell$-dimensional subspace of $V$ with finite orbit under the action of $(B_1,\ldots,B_N)$. As with the space $U$ we see easily that every element of the semigroup generated by $B_1,\ldots,B_N$ induces a permutation on the orbit of $\hat{U}$, which is finite. It follows directly that every element of the group generated by $B_1,\ldots,B_N$ also induces a permutation on the orbit of $\hat{U}$. We now claim that $\hat{U}$ must be one of the spaces $U_1,\ldots,U_r$. To see this it is clearly sufficient to show that $\hat{U}$ has nontrivial intersection with one of the spaces $U_t$, since then $\hat{U} \cap U_t$ is a nonzero subspace of $V$ with dimension at most $\ell$ and with finite orbit under the action of $(B_1,\ldots,B_N)$; hence it is necessarily $\ell$-dimensional and this yields $U_t=\hat{U}$ as required. Let us therefore show that $\hat{U}$ intersects one of the spaces $U_t$. Since $V=\bigoplus_{t=1}^r U_t$ there exists a subcollection of $U_1,\ldots,U_r$ whose span intersects $\hat{U}$ nontrivially, namely the whole collection. There therefore exists a \emph{minimal} subcollection whose span intersects $\hat{U}$ nontrivially; by relabelling if necessary, call this subcollection $U_1,\ldots,U_m$, say. Since $\hat{U} \cap (U_1\oplus \cdots \oplus U_m)$ is a nonzero subspace of $V$ with finite orbit under $(B_1,\ldots,B_N)$ it has dimension at least $\ell$ and therefore equals $\hat{U}$, and we deduce that $\hat{U} \subseteq U_1\oplus \cdots \oplus U_m$. By minimality the span of the smaller subcollection $U_1,\ldots,U_{m-1}$ must intersect $\hat{U}$ trivially. Therefore $U_1,\ldots,U_{m-1},\hat{U}$ forms a direct sum which is contained in $U_1\oplus \cdots \oplus U_m$ and so by dimension considerations must be equal to $U_1 \oplus \cdots \oplus U_m$. We deduce that $V = U_1 \oplus \cdots \oplus U_{m-1}\oplus \hat{U}\oplus U_{m+1}\oplus \cdots \oplus U_r= U_1\oplus \cdots \oplus U_r$. Now, the linear map $A_m$ constructed earlier induces a permutation on the orbit of $\hat{U}$ under $(B_1,\ldots,B_N)$ and therefore has a power which induces the identity permutation on that set. Replacing $A_m$ with a power of itself if necessary we may therefore choose $A_m \in \prox(V)$ so as to fix every space $U_1,\ldots,U_r$ and also so as to fix $\hat{U}$. Now since the leading eigenspace of $A_m$ is one-dimensional and intersects $U_m$, it does not intersect any of $U_1,\ldots,U_{m-1},U_{m+1},\ldots,U_r$. Since $A_m$  preserves every space in the splitting $V = U_1 \oplus \cdots \oplus U_{m-1}\oplus \hat{U}\oplus U_{m+1}\oplus \cdots \oplus U_r$, by the same argument used before its leading eigenspace must intersect one of those spaces; but this space must be $\hat{U}$ since none of the other spaces in the splitting intersects the leading eigenspace. Therefore $\hat{U} \cap U_m \neq \{0\}$ and we deduce that $\hat{U}=U_m$. The proof is complete.
\end{proof}
We lastly require the following simple combinatorial facts which will be used to construct transitive and primitive subspace classes:
\begin{lemma}\label{le:poximal3}
Let $k \geq 1$ and $N \geq 2$. For each $j=1,\ldots,k$ let $V_j$ be a finite-dimensional real vector space and $(A_1^{(j)},\ldots,A_N^{(j)}) \in \GL(V_j)^N$ an irreducible $N$-tuple. Then:
\begin{enumerate}[(i)]
\item
If  $\mathcal{W}\subseteq \prod_{j=1}^k \Gr(V)$ is an equivariant subspace class then it is equal to the disjoint union of finitely many transitive subspace classes.
\item
Suppose that $\mathcal{W}\subseteq \prod_{j=1}^k \Gr(V)$ is an equivariant subspace class with the following property: for every $(W_j)_{j=1}^k \in \mathcal{W}$ and $n \geq 1$ we have
\[\left\{(A_\iii^{(j)}W_j)_{j=1}^k \colon \iii \in \Sigma_N^*\text{ and }n\text{ divides }|\iii|\right\}=\mathcal{W}.\]
Then $\mathcal{W}$ is primitive.
\end{enumerate}
\end{lemma}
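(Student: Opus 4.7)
My approach is to recast the problem in terms of the finite directed graph $G$ with vertex set $\mathcal{W}$ whose edges, labelled by elements of $\{1,\ldots,N\}$, take $(W_j)_{j=1}^k$ to $(A_i^{(j)}W_j)_{j=1}^k$. Equivariance of $\mathcal{W}$ is exactly the condition that this defines a genuine graph on $\mathcal{W}$, and the combinatorial conditions of transitivity and primitivity on the subspace class correspond precisely to the conditions of irreducibility and primitivity on the non-negative adjacency matrix of $G$ in the sense of Perron--Frobenius theory. I will argue (i) and (ii) in these terms.

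For (i) the key observation is that because every $A_i^{(j)}$ is invertible, $A_\iii^{(j)}$ acts injectively on $\Gr(V_j)$ for every $\iii \in \Sigma_N^*$, so each $\iii$ induces an injection, and hence (since $\mathcal{W}$ is finite) a permutation of $\mathcal{W}$. This permutation has some finite order $p \geq 1$, so when $p \geq 2$ the word $\iii^{p-1}$ realises its inverse permutation and sends any $(A_\iii^{(j)}W_j)_{j=1}^k$ back to $(W_j)_{j=1}^k$, while when $p=1$ the element $(W_j)_{j=1}^k$ is already fixed by $\iii$. Hence the relation ``$(W_j')$ lies in the forward orbit of $(W_j)$'' is symmetric as well as reflexive and transitive, and its finitely many equivalence classes partition $\mathcal{W}$ into sub-classes each of which is transitive by construction.

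For (ii) the hypothesis in the case $n=1$ immediately gives transitivity of $\mathcal{W}$, so the adjacency matrix of $G$ is irreducible. By standard Perron--Frobenius theory, irreducibility together with aperiodicity (that is, the greatest common divisor of the lengths of closed cycles in $G$ being $1$) is equivalent to primitivity of the matrix, which is exactly the combinatorial condition we require. So it suffices to rule out the possibility that this greatest common divisor equals some $D \geq 2$. Assuming it does, the Frobenius cyclic decomposition yields a partition $\mathcal{W} = \mathcal{W}_0 \sqcup \cdots \sqcup \mathcal{W}_{D-1}$ with every edge of $G$ sending $\mathcal{W}_t$ to $\mathcal{W}_{t+1 \bmod D}$. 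Consequently, for any $(W_j) \in \mathcal{W}_0$ and $(W_j') \in \mathcal{W}_1$, every word $\iii$ with $A_\iii^{(j)}W_j = W_j'$ for all $j$ must satisfy $|\iii| \equiv 1 \pmod{D}$; but the hypothesis applied with $n=D$ produces such an $\iii$ with $|\iii|$ divisible by $D$, which is the desired contradiction.

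Neither step presents any significant technical obstacle. The most substantial component is the invocation of the Frobenius cyclic decomposition in (ii), which I would either cite directly from a standard reference or derive from scratch by defining $\mathcal{W}_t$ as the set of vertices reachable from a fixed base vertex via paths of length congruent to $t$ modulo $D$ and checking well-definedness using the fact that $D$ divides every closed cycle length at every vertex (an easy consequence of irreducibility).
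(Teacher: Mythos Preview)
Your proof is correct. For part (i) your argument is essentially identical to the paper's: both rely on the observation that, because each $A_\iii^{(j)}$ acts invertibly on $\Gr(V_j)$, every word induces a permutation of the finite set $\mathcal{W}$ and hence has finite order, so the forward-orbit relation is symmetric and the orbits partition $\mathcal{W}$ into transitive pieces. The graph-theoretic phrasing is cosmetic.

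For part (ii) the two arguments genuinely diverge. You argue by contradiction via the Frobenius cyclic decomposition: if the adjacency matrix were irreducible with period $D\geq 2$, the cyclic classes would force any path from $\mathcal{W}_0$ to $\mathcal{W}_1$ to have length $\equiv 1\pmod D$, contradicting the hypothesis at $n=D$. The paper instead gives a direct construction: it first fixes a word $\jjj$ inducing the identity permutation of $\mathcal{W}$, sets $\ell=|\jjj|$, uses the hypothesis at $n=\ell$ to obtain for each pair in $\mathcal{W}$ a connecting word of length a multiple of $\ell$ bounded by some uniform $t\ell$, and then pads each such word on the left with the appropriate power of $\jjj$ to produce connecting words all of the same length $t\ell$. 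Your route is conceptually cleaner for readers comfortable with Perron--Frobenius theory and immediately identifies the obstruction (the period), while the paper's route is entirely self-contained and constructive, yielding an explicit $p$ witnessing primitivity without appealing to or reproving the cyclic-class structure.
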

\begin{proof}
It is easy to see that if two transitive subspace classes intersect then they must contain each other and hence be equal, so to establish (i) it is sufficient to show that every $(W_j)_{j=1}^k \in \mathcal{W}$ belongs to a transitive subspace class. Given an arbitrary $(W_j^0)_{j=1}^k \in \mathcal{W}$ define
\[\hat{\mathcal{W}}:=\left\{(A_\iii^{(j)}W_j^0)_{j=1}^k \colon \iii \in \Sigma_N^*\right\}\subseteq \mathcal{W}.\]
We will show that $\hat{\mathcal{W}}$ is a transitive subspace class and contains $(W_j^0)_{j=1}^k$, which by the arbitrariness of $(W_j^0)_{j=1}^k \in \mathcal{W}$ clearly suffices to prove (i).

To show that $\hat{\mathcal{W}}$ is transitive we must show that for every $(W_j)_{j=1}^k \in \hat{\mathcal{W}}$ we have
\begin{equation}\label{eq:sudden-pine} \left\{(A_\iii^{(j)}W_j)_{j=1}^k \colon \iii \in \Sigma_N^*\right\}=\hat{\mathcal{W}}.\end{equation}
The left-hand side above is clearly a subset of the right-hand side, so we need only prove the reverse inclusion. Fix an arbitrary $(W_j)_{j=1}^k = (A_\jjj^{(j)}W_j^0)_{j=1}^k \in \hat{\mathcal{W}}$. The map $\mathcal{W} \to \mathcal{W}$ defined by $(W_j')_{j=1}^k \mapsto (A_\jjj^{(j)}W_j')_{j=1}^k$ is clearly injective and acts on a finite set, hence induces a permutation of $\mathcal{W}$, hence there exists an integer $p \geq 1$ such that the $p^{\mathrm{th}}$ power of this map is the identity permutation of $\mathcal{W}$. We therefore have $(W_j^0)_{j=1}^k = (A_{\jjj^p}^{(j)}W_j^0)_{j=1}^k = (A_{\jjj^{(p-1)}}^{(j)}W_j)_{j=1}^k \in \hat{\mathcal{W}}$. Hence
\begin{align*}\hat{\mathcal{W}}&= \left\{(A_\iii^{(j)}W_j^0)_{j=1}^k \colon \iii \in \Sigma_N^*\right\} \\
&= \left\{(A_{\iii\jjj^{p-1}}^{(j)}W_j)_{j=1}^k \colon \iii \in \Sigma_N^*\right\} \subseteq \left\{(A_\iii^{(j)}W_j)_{j=1}^k \colon \iii \in \Sigma_N^*\right\} \subseteq \hat{\mathcal{W}}.\end{align*}
We have established \eqref{eq:sudden-pine} which proves the transitivity of $\hat{\mathcal{W}}$ and the result (i) follows.

Let us now prove (ii). If $\iii \in \Sigma_N^*$ is arbitrary then it induces a permutation of $\mathcal{W}$ via the map $(W_j)_{j=1}^k \mapsto (A_\iii^{(j)}W_j)_{j=1}^k$ and in particular some power of $\iii$ induces the identity permutation. We may therefore choose $\jjj \in \Sigma_N^*$ such that $ (A_\jjj^{(j)}W_j)_{j=1}^k=(W_j)_{j=1}^k$ for all $(W_j)_{j=1}^k \in \mathcal{W}$. 
Let $\ell:=|\jjj|$. By hypothesis for every $(W_j)_{j=1}^k, (W_j')_{j=1}^k \in \mathcal{W}$ there exists $\iii \in \Sigma_N^*$ such that $(W_j)_{j=1}^k = (A_\iii^{(j)}W_j')_{j=1}^k$ and $\ell$ divides $|\iii|$. Since $\mathcal{W}$ is finite it follows directly that there exists an integer $t$ such that for every $(W_j)_{j=1}^k, (W_j')_{j=1}^k \in \mathcal{W}$ we may choose $\iii \in \Sigma_N^*$ satisfying $(W_j)_{j=1}^k = (A_\iii^{(j)}W_j')_{j=1}^k$, such that $\ell$ divides $|\iii|$, and such that $|\iii|=s\ell$ for some integer $s$ such that $1 \leq s <t$; we then have $(W_j)_{j=1}^k = (A_{\jjj^{t-s}\iii}^{(j)}W_j')_{j=1}^k$ and $|\jjj^{t-s}\iii|=t\ell$. Since the length of the word $\jjj^{t-s}\iii$ is independent of the choice of $(W_j)_{j=1}^k, (W_j')_{j=1}^k \in \mathcal{W}$ we have proved that $\mathcal{W}$ is primitive. 
\end{proof}

\begin{proof}[Proof of Theorem \ref{th:prox-unique}]  By Lemma \ref{le:poximal1} there exists $\kkk \in \Sigma_N^*$ such that for every $j=1,\ldots,k$ we have  $A_\kkk^{(j)} \in \prox(V_j)$, so the hypotheses of Lemma \ref{le:poximal2} are met by $(A_1^{(j)},\ldots,A_N^{(j)})$ and $V_j$ for every $j=1,\ldots,k$. Hence for each $j=1,\ldots,k$ there exist an integer $r_j \geq 1$ and a splitting $V_j=\bigoplus_{i=1}^{r_j} U_j^i$ such that $A_\iii^{(j)}U_j^i \in \{U_j^1,\ldots,U_j^{r_j}\}$ for every $\iii \in \Sigma_N^*$ and $i \in \{1,\ldots,r_j\}$. Without loss of generality we adopt an inner product structure on each $V_j$ such that the splitting $V_j=\bigoplus_{i=1}^{r_j} U_j^i$ is orthogonal.

Fix $\iii \in \Sigma_N^*$ and $j=1,\ldots,k$. The linear map $A_\iii^{(j)}$ induces a permutation of the set $\{U_j^1,\ldots,U_j^{r_j}\}$. We assert that $(A_\iii^{(j)})^{\top}$ also permutes this set and moreover induces the inverse permutation on it. Indeed, if $A_\iii^{(j)}U_j^{i_1}=U_j^{i_2}$ then we have
 \begin{equation}\label{eq:clay-cow}A_\iii^{(j)}\left(\left(U_j^{i_1}\right)^\perp\right) = A_\iii^{(j)} \left(\bigoplus_{\substack{1 \leq i \leq r_j \\ i \neq i_1}} U_j^{i}\right) = \bigoplus_{\substack{1 \leq i \leq r_j \\ i \neq i_2}} U_j^{i} = \left(U_j^{i_2}\right)^\perp.\end{equation}
A standard and easy calculation shows that if $B \in \GL(V)$ and $U,U' \in \Gr(V)$ then $BU = U'$ if and only if $B^\top (U')^\perp=U^\perp$, so \eqref{eq:clay-cow} yields $(A_\iii^{(j)})^\top U_j^{i_2}=U_j^{i_i}$. This demonstrates that $(A_\iii^{(j)})^\top$ induces the inverse permutation on $\{U_j^1,\ldots,U_j^{r_j}\}$ to that induced by $A_\iii^{(j)}$. It follows that $(A_\iii^{(j)})^\top A_\iii^{(j)}$ preserves $U_j^i$ for every $i=1,\ldots,r_j$. Since $V_j=\bigoplus_{i=1}^{r_j} U_j^i$ it follows that every eigenvalue of  $(A_\iii^{(j)})^\top A_\iii^{(j)}$ is realised as an eigenvalue of its restriction to some $U_j^i$. In particular the numbers $\|A_\iii^{(j)}|_{U_j^i}\| = \rho((A_\iii^{(j)})^\top A_\iii^{(j)})^{1/2}$ are distinct singular values of $A_\iii^{(j)}$ for distinct $i$. We conclude that for every  $\iii \in \Sigma_N^*$ and $j=1,\ldots,k$ we have
\begin{equation}\label{eq:parp-green}\left\|A_\iii^{(j)}\right\|=\max_{1 \leq i \leq r_j} \left\|A_\iii^{(j)}|_{U_j^i}\right\|\end{equation}
and moreover for each $i=1,\ldots,r_j$
\begin{equation}\label{eq:shivable-peach}\max\left\{\left\|A_\iii^{(j)}|_{U_j^i}\right\|, \left\|A_\iii^{(j)}|_{\left(U_j^i\right)^\perp}\right\|\right\} =\left\|A_\iii^{(j)}\right\|,\end{equation}
\begin{equation}\label{eq:hathole-gray}\min\left\{\left\|A_\iii^{(j)}|_{U_j^i}\right\|, \left\|A_\iii^{(j)}|_{\left(U_j^i\right)^\perp}\right\|\right\} \leq \sigma_2\left(A_\iii^{(j)}\right).\end{equation}

For the remainder of the proof let $\mu \in \mathcal{M}_{\sigma}(\Sigma_N)$ be an ergodic equilibrium state of $\Phi$ and let $\hat\mu \in \mathcal{M}_{\hat\sigma}(\hat{\Sigma}_N)$ be its natural extension. A large initial part of the proof will be taken up by the following construction: we claim that for every $j=1,\ldots,k$ there exists a Borel measurable function $\mathsf{U}_j \colon \hat\Sigma_N \to \{U_j^1,\ldots,U_j^{r_j}\}$ such that for $\hat\mu$-a.e $x \in \hat{\Sigma}_N$ and every $m \geq 1$,
\begin{align}\label{eq:lemon-nose}\lim_{n \to \infty}\frac{1}{n}\log\left\|A_{(\hat\sigma^{-n}x)|_n}^{(j)}|_{\mathsf{U}_j(x)}\right\| &= \lambda_1(A^{(j)},\mu),\\
\label{eq:copper-panty}\limsup_{n \to \infty}\frac{1}{n}\log\left\|A_{(\hat\sigma^{-n}x)|_n}^{(j)}|_{\mathsf{U}_j(x)^\perp}\right\| &\leq \lambda_2(A^{(j)},\mu)\end{align}
and
\begin{equation}\label{eq:dorkwood}A_{(\hat\sigma^{-m}x)|_m}^{(j)}\mathsf{U}_j(x) = \mathsf{U}_j(\hat\sigma^{-m}x).\end{equation}
To begin this construction fix $j \in \{1,\ldots,k\}$, define $Z$ to be the set
\[\bigcap_{\ell=1}^{\dim V_j} \left\{x \in \hat\Sigma_N \colon \lim_{n \to \infty} \frac{1}{n} \log \sigma_\ell\left(A_{(\hat\sigma^{-n}x)|_n}^{(j)}\right) = \lambda_\ell (A^{(j)},\mu)\right\}\]
and for each $i=1,\ldots,r_j$ define
\[X_i:=\left\{x \in \hat\Sigma_N \colon \lim_{n \to \infty} \frac{1}{n} \log \left\|A_{(\hat\sigma^{-n}x)|_n}^{(j)}|_{U_j^i}\right\| = \lambda_1(A^{(j)},\mu)\right\}.\]
Clearly all of these sets are Borel sets. For each $\ell=1,\ldots,\dim V_j$ the functions $f_n^\ell \colon \hat\Sigma_N \to \mathbb{R}$ defined by
\[f_n^\ell(x):= \log \prod_{i=1}^\ell \sigma_i\left(A_{(\hat\sigma^{-n}x)|_n}^{(j)}\right)\]
satisfy $f_{n+m}^\ell(x) \leq f_n^\ell(\hat\sigma^{-m}x)+f_m^\ell(x)$ for all $x \in \hat\Sigma_N$ and $n,m \geq 1$, and since clearly
\[Z=\bigcap_{\ell=1}^{\dim V_j} \left\{x \in \hat\Sigma_N \colon \lim_{n \to \infty} \frac{1}{n} \log \prod_{i=1}^\ell \sigma_\ell\left(A_{(\hat\sigma^{-n}x)|_n}^{(j)}\right) = \sum_{i=1}^\ell \lambda_i (A^{(j)},\mu)\right\}\]
it follows by the subadditive ergodic theorem applied to the functions $f^\ell_n$ and the transformation $\hat\sigma^{-1}$ that $\mu(Z)=1$. 

We claim that the sets $Z \cap X_i$ are pairwise disjoint for distinct values of $i$. Indeed, if $x \in Z \cap X_{i_1} \cap X_{i_2}$ with $i_1 \neq i_2$ then since $U_j^{i_2} \subseteq (U_j^{i_1})^\perp$,
\begin{align*}\lambda_1(A^{(j)},\mu) &= \lim_{n \to \infty} \frac{1}{n}\log \min\left\{\left\|A_{(\hat\sigma^{-n}x)|_n}^{(j)}|_{U_j^{i_1}}\right\|, \left\|A_{(\hat\sigma^{-n}x)|_n}^{(j)}|_{U_j^{i_2}}\right\|\right\}\\
&\leq \limsup_{n \to \infty} \frac{1}{n}\log \min\left\{\left\|A_{(\hat\sigma^{-n}x)|_n}^{(j)}|_{U_j^{i_1}}\right\|, \left\|A_{(\hat\sigma^{-n}x)|_n}^{(j)}|_{(U_j^{i_1})^\perp}\right\|\right\}\\
 &\leq \lim_{n \to \infty} \frac{1}{n}\log \sigma_2\left(A_{(\hat\sigma^{-n}x)|_n}^{(j)}\right) = \lambda_2(A^{(j)},\mu)<\lambda_1(A^{(j)},\mu)\end{align*}
 where in the first line we have used the definition of $X_{i_1}$ and $X_{i_2}$, and in the third line we have used \eqref{eq:hathole-gray}, the definition of $Z$ and the hypothesis that the top Lyapunov exponent of each $(A_1^{(j)},\ldots,A_N^{(j)})$ is simple. But the above chain of inequalities ends in a contradiction, and we conclude that $Z \cap X_{i_1} \cap X_{i_2}$ must be empty. The sets $Z \cap X_i$ are thus pairwise disjoint for distinct values of $i$ as claimed.
 
We now claim that $Z \cap (X_1 \cup \cdots \cup X_{r_j})$ has full measure. Define
\[C:=\max_{1 \leq i \leq N} \max\left\{\left\|\left(A_i^{(j)}\right)^{-1}\right\|,\left\|A_i^{(j)}\right\|\right\}\geq1.\]
If $x \in Z$ then there exists $n_0$ such that for all $n \geq n_0$
\[\left\|A_{(\hat\sigma^{-n}x)|_n}^{(j)}\right\| =\sigma_1\left(A_{(\hat\sigma^{-n}x)|_n}^{(j)}\right)>C^2\sigma_2 \left(A_{(\hat\sigma^{-n}x)|_n}^{(j)}\right)\]
by the definition of $Z$ and the hypothesis that the top Lyapunov exponent of each $(A_1^{(j)},\ldots,A_n^{(j)})$ is simple. If $x \in Z$ is fixed then for all $n \geq n_0$ we in particular have $\sigma_1(A_{(\hat\sigma^{-n}x)|_n}^{(j)})>\sigma_2 (A_{(\hat\sigma^{-n}x)|_n}^{(j)})$ and in view of \eqref{eq:shivable-peach} and \eqref{eq:hathole-gray} there exists a unique $i(x,n)$ such that $\|A_{(\hat\sigma^{-n}x)|_n}^{(j)}\| =\|A_{(\hat\sigma^{-n}x)|_n}^{(j)}|_{U_j^{i(x,n)}}\|$. We assert that $i(x,n)$ does not depend on the value of $n \geq n_0$. Given arbitrary $n \geq n_0$ we have 
\begin{align*}\left\|A_{(\hat\sigma^{-n-1}x)|_{n+1}}^{(j)}|_{U_j^{i(x,n)}}\right\|&=\left\|A_{x_{-n}}^{(j)}A_{(\hat\sigma^{-n}x)|_n}^{(j)}|_{U_j^{i(x,n)}}\right\| \geq C^{-1}\left\|A_{(\hat\sigma^{-n}x)|_{n}}^{(j)}|_{U_j^{i(x,n)}}\right\|\\
&=C^{-1}\sigma_1\left(A_{(\hat\sigma^{-n}x)|_{n}}^{(j)}\right)>C\sigma_2\left(A_{(\hat\sigma^{-n}x)|_{n}}^{(j)}\right)\\
&\geq C \left\|A_{(\hat\sigma^{-n}x)|_{n}}^{(j)}|_{(U_j^{i(x,n)})^\perp}\right\| = C\max_{i \neq i(x,n)} \left\|A_{(\hat\sigma^{-n}x)|_{n}}^{(j)}|_{U_j^{i}}\right\| \\
&\geq \max_{i \neq i(x,n)} \left\|A_{x_{-n}}^{(j)}A_{(\hat\sigma^{-n}x)|_{n}}^{(j)}|_{U_j^{i}}\right\|= \max_{i \neq i(x,n)} \left\|A_{(\hat\sigma^{-n-1}x)|_{n+1}}^{(j)}|_{U_j^{i}}\right\|\end{align*}
and in view of \eqref{eq:parp-green} it follows that $i(x,n)$ satisfies the definition of $i(x,n+1)$. We conclude that $i(x,n+1)=i(x,n)$ for all $n \geq n_0$ and therefore $i(x,n)$ in fact takes a constant value $i(x)$ for all $n \geq n_0$. We then have
\begin{align}\label{eq:spiced-rope}\lambda_1(A^{(j)},\mu) &=  \lim_{n \to \infty}\frac{1}{n}\log \sigma_1\left(A_{(\hat\sigma^{-n}x)|_n}^{(j)}\right)\\\nonumber
&= \lim_{n \to \infty} \frac{1}{n}\log \left\|A_{(\hat\sigma^{-n}x)|_n}^{(j)}\right\| = \lim_{n \to \infty}\frac{1}{n}\log\left\|A_{(\hat\sigma^{-n}x)|_n}^{(j)}|_{U_j^{i(x)}}\right\|\end{align}
so that $x \in X_{i(x)}$. Combining this with the preceding result we have shown that every element of $Z$ belongs to a unique $X_i$. Since $Z$ has full measure and the sets $X_i$ are Borel measurable we conclude that the sets $X_1,\ldots,X_{r_j}$ partition $\hat\Sigma_N$ up to $\hat\mu$-measure zero. If we define a function $\mathsf{U}_j \colon \hat\Sigma_N \to \{U_j^1,\ldots,U_j^{r_j}\}$ by $\mathsf{U}_j(x):=U^i_j$ if $x \in Z \cap X_i$ and $\mathsf{U}_j(x):=U^1_j$ otherwise, it is obvious that $\mathsf{U}_j$ is a measurable function. The equation \eqref{eq:spiced-rope} precisely asserts the truth of \eqref{eq:lemon-nose} and combining this with \eqref{eq:shivable-peach}, \eqref{eq:hathole-gray} and the definition of $Z$ yields \eqref{eq:copper-panty}. In particular \eqref{eq:lemon-nose} and \eqref{eq:copper-panty} hold for all $x \in Z \cap (X_1 \cup \cdots \cup X_{r_j})$.

It remains to prove \eqref{eq:dorkwood}. Suppose that $x$ belongs to the set $\bigcap_{\ell \in \mathbb{Z}} \hat\sigma^\ell (Z \cap (X_1 \cup \cdots \cup X_{r_j}))$, which clearly has full measure. It is easy to see that for every fixed $m \geq 1$ we have
\begin{align*}{\lefteqn{\lim_{n \to \infty} \frac{1}{n} \log \left\|A_{(\hat\sigma^{-n-m}x)|_{n}}^{(j)}|_{A_{(\hat\sigma^{-m}x)|_m}^{(j)}\mathsf{U}_j(x) }\right\|}}&\\
&= \lim_{n \to \infty} \frac{1}{n} \log \left\|A_{(\hat\sigma^{-n-m}x)|_{n}}^{(j)}A_{(\hat\sigma^{-m}x)|_{m}}^{(j)}|_{\mathsf{U}_j(x) }\right\|\\
&= \lim_{n \to \infty} \frac{1}{n} \log \left\|A_{(\hat\sigma^{-n-m}x)|_{n+m}}^{(j)}|_{\mathsf{U}_j(x) }\right\| = \lambda_1(A^{(j)},\mu)\end{align*}
and
\begin{align*}
{\lefteqn{\limsup_{n \to \infty} \frac{1}{n} \log \left\|A_{(\hat\sigma^{-n-m}x)|_{n}}^{(j)}|_{\left(A_{(\hat\sigma^{-m}x)|_m}^{(j)}\mathsf{U}_j(x)\right)^\perp }\right\|}}&\\
&=\limsup_{n \to \infty} \frac{1}{n} \log \left\|A_{(\hat\sigma^{-n-m}x)|_{n}}^{(j)}|_{A_{(\hat\sigma^{-m}x)|_m}^{(j)}\left(\mathsf{U}_j(x)^\perp\right) }\right\|\\
&=\limsup_{n \to \infty} \frac{1}{n} \log \left\|A_{(\hat\sigma^{-n-m}x)|_{n}}^{(j)}A_{(\hat\sigma^{-m}x)|_m}^{(j)}|_{\mathsf{U}_j(x)^\perp }\right\|\\
&=  \limsup_{n \to \infty} \frac{1}{n} \log \left\|A_{(\hat\sigma^{-n-m}x)|_{n+m}}^{(j)}|_{\mathsf{U}_j(x)^\perp }\right\| \leq \lambda_2(A^{(j)},\mu).\end{align*}
Since $\hat\sigma^{-m}x \in Z \cap (X_1 \cup \cdots \cup X_{r_j})$ both \eqref{eq:lemon-nose} and \eqref{eq:copper-panty} apply with $\hat\sigma^{-m}x$ in place of $x$ and in view of the above we can only have $A_{(\hat\sigma^{-m}x)|_m}^{(j)}\mathsf{U}_j(x) =\mathsf{U}_j(\hat\sigma^{-m}x)$, which is \eqref{eq:dorkwood}. This completes the construction of the measurable functions $\mathsf{U}_j \colon \hat\Sigma_N \to \{U_j^1,\ldots,U_j^{r_j}\}$.

We next claim that for fixed $j \in \{1,\ldots,k\}$, for $\hat\mu$-a.e. $x \in \hat\Sigma_N$ we have
\begin{align}\label{eq:flipper}\lim_{n \to \infty}\frac{1}{n}\log\left\|A_{x|_n}^{(j)}|_{\mathsf{U}_j(\hat\sigma^nx)}\right\| &= \lambda_1(A^{(j)},\mu),\\
\label{eq:shy-bather}\lim_{n \to \infty}\frac{1}{n}\log\left\|A_{x|_n}^{(j)}|_{\mathsf{U}_j(\hat\sigma^nx)^\perp}\right\| &\leq \lambda_2(A^{(j)},\mu).\end{align}
Define two sequences of bounded measurable functions $f_n,g_n \colon \hat\Sigma_N \to \mathbb{R}$ by $f_n(x):=\log \|A_{x|_n}^{(j)}|_{\mathsf{U}_j(\hat\sigma^nx)}\|$ and $g_n(x):=\log\|A_{x|_n}^{(j)}|_{\mathsf{U}_j(\hat\sigma^nx)^\perp}\|$. For all $n,m \geq 1$ we have $f_{n+m}(x) \leq f_n(\hat\sigma^mx) + f_m(x)$ and $g_{n+m}(x) \leq g_n(\hat\sigma^mx) + g_m(x)$ for $\hat\mu$-a.e. $x \in \hat\Sigma_N$, where we have used \eqref{eq:dorkwood}. It follows by the subadditive ergodic theorem that the limits in \eqref{eq:flipper} and \eqref{eq:shy-bather} exist $\hat\mu$-a.e. and are constant $\hat\mu$-a.e, so we need only show that these almost sure values are equal to $\lambda_1(A^{(j)},\mu)$ and bounded by $\lambda_2(A^{(j)},\mu)$ respectively. But from \eqref{eq:lemon-nose} and \eqref{eq:copper-panty} we have
\[\lim_{n \to \infty} \hat\mu\left(\left\{x \in \hat\Sigma_N \colon \left|\frac{1}{n}\log\left\|A_{x|_n}^{(j)}|_{\mathsf{U}_j(\hat\sigma^nx)}\right\| - \lambda_1(A^{(j)},\mu)\right|<\varepsilon\right\}\right) =1\]
and
\[\lim_{n \to \infty} \hat\mu\left(\left\{x \in \hat\Sigma_N \colon \frac{1}{n}\log\left\|A_{x|_n}^{(j)}|_{\mathsf{U}_j(\hat\sigma^nx)^\perp}\right\| \leq  \lambda_2(A^{(j)},\mu)+\varepsilon\right\}\right) =1\]
for all $\varepsilon>0$ since convergence almost everywhere implies convergence in measure and since $\hat\mu$ is invariant with respect to $\hat\sigma$. This is only possible if the almost sure limit in \eqref{eq:flipper} is equal to $\lambda_1(A^{(j)},\mu)$ and the almost sure limit in \eqref{eq:shy-bather} is bounded by $\lambda_2(A^{(j)},\mu)$. We have proved the claim.

Let $\mathcal{U}$ denote the set of all $k$-tuples of the form $(U_j^{i_j})_{j=1}^k$ where $1 \leq i_j \leq r_j$ for every $j=1,\ldots,k$. It is clear that $\mathcal{U}\subseteq \prod_{j=1}^k \Gr_{\ell_j}(V_j)$ is an equivariant subspace class, so by Lemma \ref{le:poximal3}(i) it is equal to the disjoint union of finitely many transitive subspace classes $\mathcal{W}_1,\ldots,\mathcal{W}_r$, say.  Consider the sets $Y_t:=\{x \in \hat\Sigma_N \colon (\mathsf{U}_j(x))_{j=1}^k \in \mathcal{W}_t\}$ for $t=1,\ldots,r$, which clearly form a measurable partition of $\hat\Sigma_N$. For $\hat\mu$-a.e. $x \in \hat\Sigma_N$ we have $(\mathsf{U}_j(x))_{j=1}^k \in \mathcal{W}_t$ if and only if $(\mathsf{U}_j(\hat\sigma x))_{j=1}^k \in \mathcal{W}_t$ as a consequence of \eqref{eq:dorkwood}, so each $Y_t$ is invariant under $\hat\sigma$ up to $\hat\mu$-measure zero. By the ergodicity of $\hat\mu$ it follows that there exists $t_0 \in \{1,\ldots,r\}$ such that $\hat\mu(Y_{t_0})=1$ and $\hat\mu(Y_t)=0$ for all other $t$.

We may now prove (i). If $\mathcal{W} \subseteq  \prod_{j=1}^k \Gr_{\ell_j}(V_j)$ is any transitive subspace class then we define a potential $\Phi_{\mathcal{W}} \colon \Sigma_N^* \to (0,+\infty)$ by
\[\Phi_{\mathcal{W}}(\iii):= \max_{(W_j)_{j=1}^k \in \mathcal{W}} \prod_{j=1}^k \|A_\iii^{(j)}|_{W_j}\|^{\beta_j}.\]
Since $\mu$ is an equilibrium state of $\Phi$ we have
\[P(\Phi)=h(\mu) + \Lambda(\Phi,\mu) = h(\mu) + \sum_{j=1}^k \beta_j \lambda_1(A^{(j)},\mu).\]
Since clearly $\Phi_{\mathcal{W}} \leq \Phi$ for every transitive subspace class $\mathcal{W} \subseteq  \prod_{j=1}^k \Gr_{\ell_j}(V_j)$ we have $P(\Phi_{\mathcal{W}})\leq P(\Phi)$ for all such classes, so if $\mathcal{W} \subseteq  \prod_{j=1}^k \Gr_{\ell_j}(V_j)$ is a transitive subspace class then $\mu$ is an equilibrium state of $\Phi_{\mathcal{W}}$ if and only if $\Lambda(\Phi_{\mathcal{W}},\mu)=\Lambda(\Phi,\mu)$. To prove (i) we will show that this is the case precisely when $\mathcal{W}=\mathcal{W}_{t_0}$. 

Let us first show that $\mu$ is an equilibrium state for $\Phi_{\mathcal{W}_{t_0}}$. We clearly have
\begin{align*}\sum_{j=1}^k \beta_j \lambda_1(A^{(j)},\mu) &= \lim_{n \to \infty} \frac{1}{n}\sum_{j=1}^k \beta_j \log \left\|A_{x|_n}^{(j)}|_{\mathsf{U}_j(\hat\sigma^nx)}\right\| \\
&= \lim_{n \to \infty} \frac{1}{n}\log \prod_{j=1}^k  \left\|A_{x|_n}^{(j)}|_{\mathsf{U}_j(\hat\sigma^nx)}\right\|^{\beta_j}  \\
&\leq \lim_{n \to \infty} \frac{1}{n}\log \max_{(W_j)_{j=1}^k \in \mathcal{W}_{t_0}}\prod_{j=1}^k  \left\|A_{x|_n}^{(j)}|_{W_j}\right\|^{\beta_j}  \\
&=\lim_{n \to \infty} \frac{1}{n}\log \Phi_{\mathcal{W}_{t_0}}(x|_n)\\
& =\Lambda(\Phi_{\mathcal{W}_{t_0}},\mu)\leq  \Lambda(\Phi,\mu) = \sum_{j=1}^k \beta_j \lambda_1(A^{(j)},\mu)\end{align*}
for $\hat\mu$-a.e. $x \in \hat\Sigma_N$, where we have used \eqref{eq:flipper} together with the subadditive ergodic theorem applied to $\Phi_{\mathcal{W}_{t_0}}$ and $\Phi$.  It follows that $\Lambda(\Phi_{\mathcal{W}_{t_0}},\mu)=\Lambda(\Phi,\mu)$ as required for $\mu$ to be an equilibrium state of $\Phi_{\mathcal{W}_{t_0}}$. This proves the existence part of (i).

Now suppose instead that $\mathcal{W} \subseteq \prod_{j=1}^k \Gr_{\ell_j}(V_j)$ is a transitive subspace class such that $\mu$ is an equilibrium state of the potential $\Phi_{\mathcal{W}}$. If $(W_j)_{j=1}^k \in \mathcal{W}$ then by definition each $W_j$ has finite orbit under the action of $(A_1^{(j)},\ldots,A_N^{(j)})$ and therefore must be one of the spaces $U_j^1,\ldots,U_j^{r_j}$, since Lemma \ref{le:poximal2} implies that those are the only $\ell_j$-dimensional subspaces of $V_j$ with finite orbit. Thus necessarily $\mathcal{W} \subseteq \mathcal{U}$. It follows that $\mathcal{W}$ intersects at least one $\mathcal{W}_t$ and since both are transitive we must have $\mathcal{W}=\mathcal{W}_t$. To complete the uniqueness part of the proof of (i) we will show that if $t \neq t_0$ then a contradiction occurs. 

Fix an arbitrary $t \neq t_0$. For almost every $x \in \hat\Sigma_N$ we have $(\mathsf{U}_j(\hat\sigma^nx))_{j=1}^k \in \mathcal{W}_{t_0}$ for all $n \geq 1$. For such $x$ and $n$, if $(W_j)_{j=1}^k \in \mathcal{W}_t $ is arbitrary then $(W_j)_{j=1}^k \notin \mathcal{W}_{t_0}$ and in particular $(W_j)_{j=1}^k \neq (\mathsf{U}_j(\hat\sigma^nx))_{j=1}^k$,  so there exists $j_0$ depending on $x$, $n$ and $(W_j)_{j=1}^k$ such that $W_{j_0} \neq \mathsf{U}_{j_0}(\hat\sigma^nx)$ and therefore $W_{j_0} \subseteq \mathsf{U}_{j_0}(\hat\sigma^nx)^\perp$. Consequently
\[\prod_{j=1}^k \left\|A_{x|_n}^{(j)}|_{W_j}\right\|^{\beta_j} \leq \left(\prod_{\substack{1 \leq j \leq k \\ j \neq j_0}}\left\|A_{x|_n}^{(j)}\right\|^{\beta_j} \right) \left\|A_{x|_n}^{(j_0)}|_{\mathsf{U}_{j_0}(\hat\sigma^nx)^\perp}\right\|^{\beta_{j_0}}\]
for this particular $x$, $n$ and $(W_j)_{j=1}^k$, and therefore 
\[\max_{(W_j)_{j=1}^k \in \mathcal{W}_t}\prod_{j=1}^k \left\|A_{x|_n}^{(j)}|_{W_j}\right\|^{\beta_j} \leq \max_{1 \leq \ell \leq k}\left[ \left(\prod_{\substack{1 \leq j \leq k \\ j \neq \ell}}\left\|A_{x|_n}^{(j)}\right\|^{\beta_j} \right) \left\|A_{x|_n}^{(\ell)}|_{\mathsf{U}_{\ell}(\hat\sigma^nx)^\perp}\right\|^{\beta_\ell}\right]\]
for this particular $x$ and $n$. Thus for $\hat\mu$-a.e. $x \in \hat\Sigma_N$ we have
\[\Phi_{\mathcal{W}_t}(x|_n)\leq \max_{1 \leq \ell \leq k}\left[ \left(\prod_{\substack{1 \leq j \leq k \\ j \neq \ell}}\left\|A_{x|_n}^{(j)}\right\|^{\beta_j} \right) \left\|A_{x|_n}^{(\ell)}|_{\mathsf{U}_{\ell}(\hat\sigma^nx)^\perp}\right\|^{\beta_\ell}\right]\]
for all $n \geq 1$. But for all $j=1,\ldots,k$ by the subadditive ergodic theorem we have for $\hat\mu$-a.e. $x \in \hat\Sigma_N$
\[\lim_{n \to \infty} \frac{1}{n}\log \left(\left\|A_{x|_n}^{(j)}\right\|^{\beta_j}\right)= \beta_j\lambda_1(A^{(j)},\mu)\]
and by \eqref{eq:shy-bather}
\[\lim_{n \to \infty} \frac{1}{n}\log \left(\left\|A_{x|_n}^{(j)}|_{\mathsf{U}_{j}(\hat\sigma^nx)^\perp}\right\|^{\beta_j}\right)\leq \beta_j\lambda_2(A^{(j)},\mu)\]
from which it follows that for $\hat\mu$-a.e. $x\in\hat\Sigma_N$
\begin{align*}\Lambda(\Phi_{\mathcal{W}_t},\mu) &= \lim_{n \to \infty} \frac{1}{n} \log\Phi_{\mathcal{W}_t}(x|_n)\\
& \leq \max_{1 \leq \ell \leq k} \left[ \left(\sum_{\substack{1 \leq j \leq k \\ j \neq \ell}} \beta_j\lambda_1(A^{(j)},\mu)\right) +\beta_\ell \lambda_2(A^{(\ell)},\mu)\right] \\
&<\sum_{j=1}^k \beta_j\lambda_1(A^{(j)},\mu)=\Lambda(\Phi,\mu).\end{align*}
Thus if $t \neq t_0$ we have $\Lambda(\Phi_{\mathcal{W}_t},\mu)<\Lambda(\Phi,\mu)$ and we have completed the proof of (i).

The proof of (ii) is now straightforward. By Lemma \ref{le:poximal1}, for $\hat\mu$-a.e. $x\in \hat\Sigma_N$ we have
\[\limsup_{n \to \infty}\min_{1 \leq j \leq k} \left(\frac{1}{n}\log\rho\left(A_{x|_n}^{(j)}\right) -\frac{1}{2n}\log \sigma_1\left(A_{x|_n}^{(j)}\right)\sigma_2\left(A_{x|_n}^{(j)}\right)\right)>0,\]
and for all $j=1,\ldots,k$ and $\hat\mu$-a.e. $x \in \hat\Sigma_N$ we have by \eqref{eq:shy-bather}
\[\lim_{n \to \infty}\frac{1}{n}\log\left\|A_{x|_n}^{(j)}|_{\mathsf{U}_j(\hat\sigma^nx)^\perp}\right\| \leq \lambda_2(A^{(j)},\mu)\]
and by the subadditive ergodic theorem 
\[\lim_{n \to \infty}  \frac{1}{2n}\log \sigma_1\left(A_{x|_n}^{(j)}\right)\sigma_2\left(A_{x|_n}^{(j)}\right)= \frac{1}{2}\lambda_1(A^{(j)},\mu)+\frac{1}{2}\lambda_2(A^{(j)},\mu)>\lambda_2(A^{(j)},\mu).\]
It follows that there exist $x \in \hat\Sigma_N$ and $n \geq 1$ such that $(\mathsf{U}_j(\hat\sigma^nx))_{j=1}^k \in \mathcal{W}_{t_0}$ and such that for all $j=1,\ldots,k$
\begin{equation}\label{eq:le-cute-white}\rho\left(A_{x|_n}^{(j)}\right)>\sigma_1\left(A_{x|_n}^{(j)}\right)^{\frac{1}{2}}\sigma_2\left(A_{x|_n}^{(j)}\right)^{\frac{1}{2}}>\left\|A^{(j)}_{x|_{n}}|_{\mathsf{U}_j(\hat\sigma^{n}x)^\perp}\right\|.\end{equation}
Let $\jjj:=x|_n$ and $(W_j^0)_{j=1}^k:=(\mathsf{U}_j(\hat\sigma^nx))_{j=1}^k \in \mathcal{W}_{t_0}$. Since $\rho(A_\jjj^{(j)})^2>\sigma_1(A_\jjj^{(j)})\sigma_2(A_\jjj^{(j)})$ for every $j=1,\ldots,k$ we have $A_\jjj^{(j)} \in \prox(V_j)$ for every $j=1,\ldots,k$. The map $(W_j)_{j=1}^k \mapsto (A_\jjj^{(j)}W_j)_{j=1}^k$ induces a permutation of $\mathcal{W}_{t_0}$ and therefore some power of this map induces the identity permutation, so let $\kkk:=\jjj^p$ be a power of $p$ such that $A_\kkk^{(j)}W_j^0=W_j^0$ for every $j=1,\ldots,k$. For every $j=1,\ldots,k$ we clearly have $A_\kkk^{(j)}\in \prox(V_j)$, and since $A_\kkk^{(j)}$ preserves the splitting $V_j = U_j^1\oplus \cdots \oplus U_j^{r_j}$ its leading eigenvector belongs to one of these spaces. In particular this eigenvector belongs either to $W_j^0$ or to $(W_j^0)^\perp$. If it belongs to the latter then the leading eigenvector of $A_\jjj^{(j)}$, being the same vector, belongs to $(W_j^0)^\perp$ and therefore we have $\rho(A_\jjj^{(j)})\leq \|A_\jjj^{(j)}|_{(W_j^0)^\perp}\|$, but this contradicts \eqref{eq:le-cute-white}. Hence the leading eigenvector of $A_\kkk^{(j)}$ belongs to $W_j^0$ for every $j=1,\ldots,k$. For each $j=1,\ldots,k$ the linear map $A_\kkk^{(j)}$ fixes $W_j^0$ and its eigenvalues when restricted to $W_j^0$ are a subset of its eigenvalues on $V_j$ which includes the largest eigenvalue. Thus $A_\kkk^{(j)}|_{W_j^0} \in \prox(W_j^0)$ for every $j=1,\ldots,k$ and we have proved (ii).

It remains to prove (iii). We first claim that for every $(W_j)_{j=1}^k \in \mathcal{W}_{t_0}$ we have
\[ \hat\mu\left(\{x \in \hat\Sigma_N \colon (\mathsf{U}_j(x))_{j=1}^k=(W_j)_{j=1}^k\}\right)>0.\]
To see this we argue as follows. Each $\iii \in \Sigma_N^*$ induces a permutation of $\mathcal{W}_{t_0}$ via the map $(W_j)_{j=1}^k \mapsto (A_\iii^{(j)}W_j)_{j=1}^k$, and some power of the same word consequently induces the inverse permutation. The set of all permutations of $\mathcal{W}_{t_0}$ which can be realised in this way thus forms a finite group. We may therefore find a finite list of words $\iii_1,\ldots,\iii_p \in \Sigma_N^*$ such that every permutation of $\mathcal{W}_{t_0}$ of the form $(W_j)_{j=1}^k \mapsto (A_\iii^{(j)}W_j)_{j=1}^k$ is realised by $(W_j)_{j=1}^k \mapsto (A_{\iii_1 \iii_2 \cdots \iii_q}^{(j)}W_j)_{j=1}^k$ for some $q \in \{1,\ldots,p\}$. (For example, we could take $p$ to be an even number such that the words $\iii_1,\iii_3,\iii_5,\ldots,\iii_{p-1}$ induce all of the various permutations and such that $\iii_{q+1}$ induces the inverse of the permutation induced by $\iii_q$ for each odd number $q$.) The list $\iii_1,\ldots,\iii_p \in \Sigma_N^*$ has the property that if $(W_j)_{j=1}^k \in \mathcal{W}_{t_0}$ is arbitrary, then
\begin{equation}\label{eq:fresh-canding}\left\{((A_{\iii_1\cdots \iii_q}^{(j)})^{-1}W_j)_{j=1}^k \colon 1 \leq q \leq p\right\} = \mathcal{W}_{t_0}.\end{equation}
Now by Theorem \ref{th:bomo} the measure $\mu$ is fully supported on $\Sigma_N$, so $\hat\mu([\iii])>0$ for every $\iii \in \Sigma_N^*$ and in particular $\hat\mu([\iii_1\cdots \iii_p])>0$. Since the set of all $x \in \hat\Sigma_N$ such that $ (\mathsf{U}_j(x))_{j=1}^k \in \mathcal{W}_{t_0}$ has full measure, we may choose $(W_j)_{j=1}^k \in \mathcal{W}_{t_0}$ such that
\[\hat\mu\left(\left\{x \in \hat\Sigma_N \colon (\mathsf{U}_j(x))_{j=1}^k=(W_j)_{j=1}^k\text{ and }x \in [\iii_1\cdots \iii_p]\right\}\right)>0.\]
For every $q=1,\ldots,p$ we have
\[\hat\mu\left(\left\{x \in \hat\Sigma_N \colon (\mathsf{U}_j(\hat\sigma^{-|\iii_1\cdots \iii_q|}x))_{j=1}^k=(W_j)_{j=1}^k\text{ and }\hat\sigma^{-|\iii_1\cdots \iii_q|}x \in [\iii_1\cdots \iii_p]\right\}\right)>0\]
by the $\hat\sigma$-invariance of $\hat\mu$. Now using \eqref{eq:dorkwood} we have
\[\left(\mathsf{U}_j(\hat\sigma^{-|\iii_1\cdots \iii_q|}x)\right)_{j=1}^k= \left(A_{(\hat\sigma^{-|\iii_1\cdots \iii_q|}x)|_{|\iii_1\cdots \iii_q|}}^{(j)}\mathsf{U}_j(x)\right)_{j=1}^k = \left(A_{\iii_1\cdots \iii_q}^{(j)} \mathsf{U}(x)\right)_{j=1}^k \]
for $\hat\mu$-a.e. $x \in \hat\sigma^{|\iii_1\cdots \iii_q|}[\iii_1\cdots \iii_p]$, so 
\[\hat\mu\left(\left\{x \in \hat\Sigma_N \colon (A_{\iii_1\cdots \iii_q}^{(j)} \mathsf{U}(x))_{j=1}^k =(W_j)_{j=1}^k \right\}\right)>0\]
and therefore
\[\hat\mu\left(\left\{x \in \hat\Sigma_N \colon (\mathsf{U}(x))_{j=1}^k =((A_{\iii_1\cdots \iii_q}^{(j)}  )^{-1}W_j)_{j=1}^k \right\}\right)>0\]
for every $q=1,\ldots,p$. In view of \eqref{eq:fresh-canding} this proves the claim.

We may now prove (iii). We will show that the hypothesis of Lemma \ref{le:poximal3}(ii) is satisfied by $\mathcal{W}_{t_0}$. Fix arbitrary $n \geq 1$ and suppose that $\mu$, and hence $\hat\mu$, is totally ergodic. Let $(W_j)_{j=1}^k,(W_j')_{j=1}^k \in \mathcal{W}_{t_0}$ be arbitrary. Since $\hat\mu$ is ergodic with respect to $\hat\sigma^n$, for every two sets $Z_1,Z_2 \subseteq \hat\Sigma_N$ such that $\hat\mu(Z_1),\hat\mu(Z_2)>0$ we may find $\ell \geq 1$ such that $\hat\mu(Z_1 \cap \hat\sigma^{n\ell}Z_2)>0$. In particular using the claim just proved it follows that there exists $\ell \geq 1$ such that
\[ \hat\mu\left(\{x \in \hat\Sigma_N \colon (\mathsf{U}_j(x))_{j=1}^k =(W_j)_{j=1}^k\text{ and } (\mathsf{U}_j(\hat\sigma^{-n\ell}x))_{j=1}^k=(W_j')_{j=1}^k\}\right)>0.\]
Hence for a positive-measure set of $x \in \hat\Sigma_N$ we have
\[(W_j')_{j=1}^k = (\mathsf{U}_j(\hat\sigma^{-n\ell} x))_{j=1}^k = (A_{(\hat\sigma^{-n\ell}x)|_{n\ell}}^{(j)}\mathsf{U}_j(x))_{j=1}^k= (A_{(\hat\sigma^{-n\ell}x)|_{n\ell}}^{(j)}W_j)_{j=1}^k\]
using \eqref{eq:dorkwood}. In particular $(W_j')_{j=1}^k = (A_\iii^{(j)}W_j)_{j=1}^k$ for some $\iii \in \Sigma_N^*$ such that $n$ divides $|\iii|$. Since $(W_j')_{j=1}^k \in \mathcal{W}_{t_0}$ was arbitrary, we have shown that for every $(W_j)_{j=1}^k \in \mathcal{W}_{t_0}$ we have 
\[\left\{(A_\iii^{(j)}W_j)_{j=1}^k \colon \iii \in \Sigma_N^* \text{ and }n \text{ divides }|\iii|\right\} = \mathcal{W}_{t_0}.\]
Since $n$ and $(W_j)_{j=1}^k \in \mathcal{W}_{t_0}$ were also arbitrary this shows that the hypothesis of Lemma \ref{le:poximal3}(ii) is satisfied by $\mathcal{W}_{t_0}$. Hence $\mathcal{W}_{t_0}$ is primitive as required to prove (iii) and complete the proof of the theorem.
 \end{proof}




\section{Towards the precondition for $\psi$-mixing in the primitive proximal case}\label{se:thirdstage}

In this section we prove the following final major component of Theorem \ref{th:main}:
\begin{theorem}\label{th:core0}
Let $N \geq 2$ and $k \geq 1$ and for each $j=1,\ldots,k$ let $V_j$ be a finite-dimensional real vector space, let $(A^{(j)}_1,\ldots,A_N^{(j)}) \in \GL(V_j)^N$ be irreducible and let $\beta_j>0$. For each $j=1,\ldots,k$ let $\ell_j\geq 1$ be the least possible dimension of a nonzero subspace of $V_j$ which has finite orbit under the action of $(A_1^{(j)},\ldots,A_N^{(j)})$. Let $\mathcal{W} \subseteq \prod_{j=1}^k \Gr_{\ell_j}(V_j)$ be a primitive subspace class, and for every $\iii \in \Sigma_N^*$ define 
\[\Phi_{\mathcal{W}}(\iii):=\prod_{j=1}^k \left\|A_\iii^{(j)}|_{W_j}\right\|^{\beta_j}.\]
Suppose that there exist $\kkk \in \Sigma_N^*$ and $(W_j^0)_{j=1}^k \in \mathcal{W}$ such that $A_\kkk^{(j)}|_{W_j^0} \in \prox(W_j^0)$ for all $j=1,\ldots,k$. (In particular we suppose that $A_\kkk^{(j)}W_j^0=W_j^0$ for all $j=1,\ldots,k$.) Then there exist $m \geq 1$ and $\delta>0$ such that
\begin{equation} \label{eq:ghasty-pink}\max_{|\kkk|=m} \Phi_{\mathcal{W}}(\iii\kkk\jjj) \geq \delta \Phi_{\mathcal{W}}(\iii)\Phi_{\mathcal{W}}(\jjj)\end{equation}
for all $\iii,\jjj \in \Sigma_N^*$.
\end{theorem}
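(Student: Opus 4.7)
The plan is to build, for each pair $\iii,\jjj\in\Sigma_N^*$, a filler word of fixed length $m$ realizing \eqref{eq:ghasty-pink}, using primitivity to route subspaces to the distinguished tuple $(W_j^0)$, proximality to amplify norms along the leading eigendirection, and Zariski density for the required non-degeneracy. Write $\kkk_0$ for the proximal word of the hypothesis, $q:=|\kkk_0|$, and $P_j:=A_{\kkk_0}^{(j)}|_{W_j^0}\in\prox(W_j^0)$, with leading eigenvalue $\lambda_j$, one-dimensional leading eigenspace $E_j^+=\mathbb{R}v_j^+$, and $P_j$-invariant complementary hyperplane $E_j^-\subset W_j^0$. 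By Lemma \ref{le:poximal2}, each $V_j$ admits a decomposition $V_j=\bigoplus_{i=1}^{r_j}U_j^i$ into $\ell_j$-dimensional subspaces permuted by the semigroup, with $W_j^0$ among them; equip each $V_j$ with an inner product making this decomposition orthogonal. The key asymptotic is $P_j^n x\sim c(x)\lambda_j^n v_j^+$ as $n\to\infty$ for any $x\in W_j^0\setminus E_j^-$, where $c(x)$ denotes the $E_j^+$-component of $x$.

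For given $\iii,\jjj$, let $(W_j^\iii),(W_j^\jjj)\in\mathcal{W}$ be the optimizers for $\Phi_{\mathcal{W}}(\iii),\Phi_{\mathcal{W}}(\jjj)$ and let $v_j^\jjj\in W_j^\jjj$ be unit vectors achieving $\|A_\jjj^{(j)}|_{W_j^\jjj}\|$. Set $w_j:=A_\jjj^{(j)}v_j^\jjj$ and $(W_j^*):=(A_\jjj^{(j)}W_j^\jjj)$, which lies in $\mathcal{W}$ by equivariance. Primitivity of $\mathcal{W}$ with period $p$ produces router words $\mathbf{a},\mathbf{b}$ of length $p$ satisfying $A_\mathbf{a}^{(j)}W_j^*=W_j^0$ and $A_\mathbf{b}^{(j)}W_j^0=W_j^\iii$ for all $j$; I form the candidate filler $\kkk:=\mathbf{b}\,\kkk_0^n\,\mathbf{a}$, of length $2p+nq$, and take $m:=2p+nq$ for a suitably large fixed $n$. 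Evaluating $\Phi_{\mathcal{W}}(\iii\kkk\jjj)$ at $(W_j^\jjj)$ and tracking $v_j^\jjj$ through the composition $A_\iii^{(j)}A_\mathbf{b}^{(j)}P_j^nA_\mathbf{a}^{(j)}A_\jjj^{(j)}$ yields, via the proximal asymptotic,
\[ \Phi_{\mathcal{W}}(\iii\kkk\jjj)\gtrsim\prod_{j=1}^k\bigl(|\lambda_j|^n|c_j|\cdot\|A_\iii^{(j)}A_\mathbf{b}^{(j)}v_j^+\|\cdot\|w_j\|\bigr)^{\beta_j}, \]
where $c_j$ is the $E_j^+$-component of $A_\mathbf{a}^{(j)}w_j$. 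Since $\|w_j\|=\|A_\jjj^{(j)}|_{W_j^\jjj}\|$, the inequality \eqref{eq:ghasty-pink} reduces to two uniform non-degeneracy conditions: $|c_j|$ is bounded below, and $\|A_\iii^{(j)}A_\mathbf{b}^{(j)}v_j^+\|\gtrsim\|A_\iii^{(j)}|_{W_j^\iii}\|$, uniformly in $\iii,\jjj,j$.

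The hard part, and the principal obstacle, is precisely this non-degeneracy step. The central algebraic input is the claim that the restriction to $W_j^0$ of $\mathrm{Stab}_{G_j}(W_j^0)$, where $G_j$ is the Zariski closure in $\GL(V_j)$ of the semigroup generated by $A_1^{(j)},\ldots,A_N^{(j)}$, acts \emph{irreducibly} on $W_j^0$: an invariant proper subspace of $W_j^0$ under this restricted stabilizer would combine with the finite $(A_1^{(j)},\ldots,A_N^{(j)})$-orbit of $W_j^0$ to produce a nonzero subspace of $V_j$ of dimension strictly less than $\ell_j$ with finite semigroup-orbit, contradicting the minimality of $\ell_j$ in Lemma \ref{le:poximal2}. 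Consequently the locus $\{g\in G_j:gW_j^*=W_j^0,\,gw_j\in E_j^-\}$ is a proper Zariski-closed subset of the corresponding coset of $\mathrm{Stab}_{G_j}(W_j^0)$, and the analogous locus obstructing the second condition (non-alignment of $A_\mathbf{b}^{(j)}v_j^+$ with the top singular direction of $A_\iii^{(j)}|_{W_j^\iii}$) is likewise proper and Zariski-closed. By Zariski density of the semigroup in $G_j$ combined with a finite-covering argument on the compact projective spaces $\mathbb{P}(W_j^*),\mathbb{P}(W_j^\iii)$---which take only finitely many values since $\mathcal{W}$ is finite---bounded-length router choices $\mathbf{a},\mathbf{b}$ exist (possibly of length greater than $p$ but still uniformly bounded) yielding uniform lower bounds on both $|c_j|$ and $\|A_\iii^{(j)}A_\mathbf{b}^{(j)}v_j^+\|/\|A_\iii^{(j)}|_{W_j^\iii}\|$. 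The delicate point is enforcing all $k$ coordinate-wise conditions simultaneously with a single word, which requires passing to the product Zariski closure $\overline{\{(A_\iii^{(j)})_j:\iii\in\Sigma_N^*\}}^{\mathrm{Zar}}\subseteq\prod_jG_j$ and verifying that the associated Zariski-open sets remain nonempty inside that closure; once this is established, adjusting the lengths of $\mathbf{a},\mathbf{b}$ modulo $q$ (by padding with $\kkk_0$ on appropriate sides, which preserves the routing conditions since $\kkk_0$ fixes $W_j^0$) gives filler words of exactly the prescribed length $m$, completing the proof.
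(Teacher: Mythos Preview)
Your outline has the right ingredients—primitivity for routing, proximality for amplification, Zariski density and the ``irreducibility of the stabiliser on $W_j^0$'' argument for non-degeneracy, compactness for uniformity—and these are exactly the ideas the paper uses. The gap is in the length-equalisation step. You build the filler as $\mathbf{b}\,\kkk_0^n\,\mathbf{a}$ and load both routing and non-degeneracy onto the routers $\mathbf{a},\mathbf{b}$; compactness then produces finitely many candidate routers of \emph{a priori} unequal lengths, and you propose to equalise by ``padding with $\kkk_0$ on appropriate sides''. But padding with $\kkk_0$ only changes lengths by multiples of $q=|\kkk_0|$, so if the router pairs arising from compactness have $|\mathbf{a}|+|\mathbf{b}|$ lying in different residue classes modulo $q$—and nothing in your argument rules this out—no single $m$ works. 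The routers themselves cannot be raised to powers (they map between distinct elements of $\mathcal{W}$, not from $W_j^0$ to itself), so the LCM-of-powers trick is unavailable for them.

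The paper resolves this by separating duties. Outer routers $\iii_2,\jjj_1$ of fixed length $p$ (from primitivity) carry \emph{only} the routing condition; all non-degeneracy is pushed onto a middle word $\kkk$ lying in the identity component $G^0$ of the Zariski closure of the diagonal semigroup $\{\bigoplus_j A_\iii^{(j)}:\iii\in\Sigma_N^*\}$. Since $G^0$ stabilises every $W_j^0$, the requirement becomes $\|b_{1,j}\hat\phi_j(g_\kkk)b_{2,j}\|\geq\kappa\|b_{1,j}\|\|b_{2,j}\|$ for endomorphisms $b_{1,j},b_{2,j}\in\ned(W_j^0)$ encoding the $\iii$- and $\jjj$-data. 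For each fixed datum one finds (by your Zariski argument, carried out inside the irreducible variety $G^0$) a word $\iii_t$ with $g_{\iii_t}\in G^0$ that is simultaneously \emph{proximal} on every $W_j^0$ and whose rank-one limit does not annihilate the relevant products; compactness over the unit spheres of $\prod_j\ned(W_j^0)^2$ gives finitely many such words $\iii_1,\ldots,\iii_r$. The crucial point is that because each $\iii_t$ is itself proximal on every $W_j^0$, high powers $\iii_t^{n_t}$ converge to the same rank-one limit and therefore retain the non-degeneracy; taking $m$ to be a common multiple of $|\iii_1|,\ldots,|\iii_r|$ makes all middle words $\kkk_t:=\iii_t^{n_t}$ have length exactly $m$, and the filler $\iii_2\kkk_t\jjj_1$ then has the fixed length $m+2p$. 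In short, the length equalisation works precisely because the words carrying the non-degeneracy are proximal and hence stable under powers—a property your routers lack.
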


The inequality \eqref{eq:ghasty-pink} refines the statement
\begin{equation}\label{eq:ferry-purple}\max_{|\kkk|\leq m} \Phi(\iii\kkk\jjj) \geq \delta \Phi(\iii)\Phi(\jjj)\end{equation}
which was the core result required in the proof of Theorem \ref{th:bomo} above in the earlier article \cite{BoMo18}. To some extent, therefore, Theorem \ref{th:core0} above adapts the ideas used in  \cite{BoMo18}  in such a manner as to obtain a stronger conclusion under stronger hypotheses. In particular this argument makes heavy use of the properties of the Zariski topology described for the reader in \S\ref{ss:zariski}. We will briefly compare these two results and their proofs.

The earlier result \cite[Theorem 6]{BoMo18} began by defining $V:=\bigoplus_{j=1}^kV_j$ and $g_i:=\bigoplus_{j=1}^k A_i^{(j)} \in \GL(V)$ for each $i=1,\ldots,N$, defining $g_\iii:=g_{i_1}\cdots g_{i_n}$ for every $\iii=(i_\ell)_{\ell=1}^n \in \Sigma_N^*$, defining  $\Gamma \subset \GL(V)$ to be the semigroup generated by the linear maps $g_1,\ldots,g_N$, taking $G\leq \GL(V)$ to be the Zariski closure of $\Gamma$, defining homomorphisms $\phi_j \colon \Gamma \to \GL(V_j)$ by $\phi_j(g_{i_1}\cdots g_{i_n}):=A_{i_1}^{(j)}\cdots A_{i_n}^{(j)}$ for each $j=1,\ldots,k$ and observing that these extend to regular representations $\phi_j \colon G \to \GL(V_j)$ for each $j$. The key objective was then to show that there exist an integer $m \geq 1$ and real number $\kappa>0$ such that for every $\iii,\jjj \in \Sigma_N^*$ there exists $\kkk \in \Sigma_N^*$ such that $|\kkk|\leq m$ and $\|\phi_j(g_\iii g_\kkk g_\jjj)\| \geq \kappa \|\phi_j(g_\iii)\|\cdot \|\phi_j(g_\jjj)\|$ simultaneously for $j=1,\ldots,k$; the main result \eqref{eq:ferry-purple} then followed directly. In order to find $\kkk$ satisfying these simultaneous conditions it is useful to be able to pass to the identity component $G^0$ of $G$, which has the advantageous property of being an irreducible variety: in an irreducible variety every nonempty open set is dense, and so to find a word solving $k$ algebraic conditions simultaneously it is sufficient to show that each condition separately is satisfied on a nonempty open subset of $G^0$, since the intersection of these dense open sets must be nonempty. The analytic result $\|\phi_j(g_\iii g_\kkk g_\jjj)\| \geq \kappa \|\phi_j(g_\iii)\|\cdot \|\phi_j(g_\jjj)\|$ is proved by applying the previous reasoning to an algebraic property of $g_\kkk$ and combining this with a compactness argument. In order to reduce this task to that of studying elements of $G^0$ only, the proof of \cite[Theorem 6]{BoMo18} exploited the transitivity of $\mathcal{W}$: by appending a small word to $\iii$ and prepending a small word to $\jjj$ if necessary, we could assume that $\|\phi_j(g_\iii)\|$ and $\|\phi_j(g_\jjj)\|$ could be understood using the behaviour of $\phi_j(g_\iii)$ and $\phi_j(g_\jjj)$ restricted to $W_j\subseteq V_j$ for a consistent choice of $(W_j)_{j=1}^k \in \mathcal{W}$ not depending on $\iii$ or $\jjj$. This allowed us the freedom to specialise to considering only those $g_\kkk$ such that $\phi_j(g_\kkk)W_j=W_j$, in particular allowing us to work only in $G^0$ and not in the whole of $G$, making arguments based on the irreducibility of the variety $G^0$ available. Crucially, since only transitivity of $\mathcal{W}$ was assumed, the lengths of the appended and prepended words could be bounded \emph{a priori} via the finiteness and transitivity of $\mathcal{W}$ but their precise length could not be specified in advance.

In order to prove Theorem \ref{th:core0} we improve this argument in two respects so as to make the length of the word $\kkk$ interposed between $\iii$ and $\jjj$ consistent across all $\iii$ and $\jjj$. Firstly, by assuming primitivity of $\mathcal{W}$ instead of transitivity we can control the length of the small word appended to $\iii$ and the small word prepended to $\jjj$ so as to make their lengths equal to some \emph{a priori} constant $p$ which is independent of the choice of $\iii$ and $\jjj$. This reduces the problem to that of controlling the length of the word $\kkk$ which is chosen so as to satisfy 
$\|\phi_j(g_\iii g_\kkk g_\jjj)\| \geq \kappa \|\phi_j(g_\iii)\|\cdot \|\phi_j(g_\jjj)\|$ simultaneously for $j=1,\ldots,k$. To solve the latter problem we use the following intuition: if we knew that $\kkk$ could be chosen with length less than or equal to some number $m_0$ but also with the property that each $\phi_j(g_\kkk)$ was of rank one and non-nilpotent, then we would know that every power of $\phi_j(g_\kkk)$ is just a scalar multiple of $\phi_j(g_\kkk)$ and satisfies a relation similar to $\|\phi_j(g_\iii g_\kkk g_\jjj)\| \geq \kappa \|\phi_j(g_\iii)\|\cdot \|\phi_j(g_\jjj)\|$ but with an additional scalar constant. Thus if we took $m$ to be the least common multiple of the lengths of the finitely many different words $\kkk$ needed to connect the full range of possible pairs of words $\iii,\jjj \in \Sigma_N^*$, and replaced each $\kkk$ with a power of itself having length $m$, we could obtain the desired inequality \eqref{eq:ghasty-pink} with the word in the middle having length $m+2p$. But it is obvious that $\phi_j(g_\kkk)$ actually has full rank, so this idea must be modified. The key modification is to choose each $\kkk$ so that each $\phi_j(g_\kkk)$ is \emph{proximal}, and hence is close in norm to a scalar multiple of its powers. This explains the additional hypothesis of Theorem \ref{th:core0} regarding the existence of a simultaneously proximal word. 

\begin{proof}[Proof of Theorem \ref{th:core0}]

 Define $V:=\bigoplus_{j=1}^kV_j$ and for each $i=1,\ldots,N$ define $g_i:=\bigoplus_{j=1}^k A_i^{(j)} \in \GL(V)$. Define $g_\iii:=g_{i_1}\cdots g_{i_n}$ for every $\iii =(i_\ell)_{\ell=1}^n \in \Sigma_N^*$ in the obvious fashion. Let $\Gamma:=\{g_\iii \colon \iii \in \Sigma_N^*\}$, which is a semigroup. Let $G \leq \GL(V)$ denote the Zariski closure of $\Gamma$, which is a group. For each $j=1,\ldots,k$ we may define a regular representation $\phi_j \colon \Gamma \to \GL(V_j)$ by $\phi_j(g_\iii):=A_\iii^{(j)}$ and this extends to a regular representation $\phi_j \colon G \to \GL(V_j)$ which is irreducible since its image contains the linear maps $A_1^{(j)},\ldots,A_N^{(j)}$.
 
Let $(W_j^0)_{j=1}^k \in \mathcal{W}$ be as in the statement of the theorem. For each $j=1,\ldots,k$ the subspace $W_j^0$ has finite orbit under the action of $\phi_j(G)$. Denote this orbit by $\{U_j^1,\ldots,U_j^{r_j}\}$.  If $i_0 \in \{1,\ldots,r_j\}$ is fixed then each of the sets $\{g \in G \colon \phi_j(g)U_j^{i_0} = U_j^{i}\}$ is closed in the Zariski topology, and since these sets are disjoint and their union over $i=1,\ldots,r_j$ is equal to $G$, these sets are also open. It follows that they are unions of connected components of $G$ and in particular every connected component of $G$ is contained in a unique set of the form $\{g \in G \colon \phi_j(g)U_j^{i_0} = U_j^{i}\}$ for some $i \in \{1,\ldots,r_j\}$. Since the identity is in $G^0$ we must have $G^0 \subseteq \{g \in G \colon \phi_j(g)U_j^{i_0} = U_j^{i_0}\}$, and it follows that every $U^i_j$ is stabilised by $\phi_j(G^0)$. In particular we have $\phi_j(g)W_j^0=W_j^0$ for every $j=1,\ldots,k$ for every $g \in G^0$. For each $j=1,\ldots,k$ let us define a regular representation $\hat\phi_j \colon G^0 \to \GL(W_j^0)$ by $\hat\phi_j(g):=\phi_j(g)|_{W_j^0}$. By hypothesis there exists $\kkk \in \Sigma_N^*$ such that $\hat\phi_j(g_\kkk) \in \prox(W_j^0)$ for every $j=1,\ldots,k$. The map $g \mapsto g_\kkk g$ is a Zariski homeomorphism of $G$ and induces a permutation of the connected components of $G$; in particular there exists an iterate of this map which induces the identity permutation. By replacing $\kkk$ with a suitable power of $\kkk$ if necessary we may therefore assume without loss of generality that $g_\kkk \in G^0$.  Define $q_{\kkk,j}:=\lim_{n \to \infty} \|\hat\phi_j(g_\kkk^n)\|^{-1}\hat\phi_j(g_\kkk^n) \in \ned(W_j^0)$ for each $j=1,\ldots,k$; by proximality each $q_{\kkk,j}$ is well-defined, of rank one, and not nilpotent. 

We make the following first claim: if nonzero elements $b_{1,j},b_{2,j}$ of $\ned(W_j^0)$ are given for each $j=1,\ldots,k$ then there exists $\iii \in \Sigma_N^*$ such that $g_\iii \in G^0$, such that $\hat\phi_j(g_\iii) \in \prox(W_j^0)$ for each $j=1,\ldots,k$ and such that for each $j=1,\ldots,k$ the linear map $p_{\iii,j}:=\lim_{n \to \infty} \|\hat\phi_j(g_\iii^n)\|^{-1}\hat\phi_j(g_\iii^n)\in \ned(W_j^0)$ satisfies $b_{1,j}p_{\iii,j}b_{2,j} \neq 0$. Clearly it will be sufficient to choose a nonzero vector $v_j \in b_{2,j}W_j^0$ for each $j=1,\ldots,k$ and find $\iii \in \Sigma_N^*$ such that $g_\iii \in G^0$, such that $\hat\phi_j(g_\iii)$ is proximal for all $j=1,\ldots,k$ and such that $p_{\iii,j}v_j \notin \ker b_{1,j}$ for all $j=1,\ldots,k$. We therefore fix nonzero linear maps $b_{1,j}$ and $b_{2,j}$ and nonzero vectors $v_j$ for each $j=1,\ldots,k$ and prove the claim in this form. 

 We assert that there exists $\jjj_2 \in \Sigma_N^*$ such that $g_{\jjj_2} \in G^0$ and such that for all $j=1,\ldots,k$ we have $q_{\kkk,j}\hat\phi_j(g_{\jjj_2}) v_j \neq 0$. To see this it suffices to show that 
\[\bigcap_{j=1}^k \left\{g \in G^0 \colon q_{\kkk,j}\hat\phi_j(g) v_j \neq 0\right\}\]
is nonempty and Zariski open, since by the Zariski density of $\Gamma$ in $G$ it must then contain some $g_\iii \in \Gamma$. This set is the intersection of the sets
\begin{equation}\label{eq:burf-pink}\left\{g \in G^0 \colon q_{\kkk,j}\hat\phi_j(g) v_j \neq 0\right\}\end{equation}
over $j=1,\ldots,k$ and each of these sets is clearly Zariski open. Since $G^0$ is an irreducible variety, all of its nonempty open subsets are also dense, so the intersection of the sets \eqref{eq:burf-pink} will be nonempty and open as long as each individual set is nonempty. The assertion will therefore be proved if each of the sets in \eqref{eq:burf-pink} is shown to be nonempty.  But if this set is empty for some $j$ then the vector space $U:=\spann \{\hat\phi_j(g)v_j\colon g \in G^0\}$ is a subspace of $W_j^0$ (and hence of $V_j)$ which is invariant under $\phi_j(G^0)$ and has smaller dimension than $W_j^0$, since it is contained in the proper subspace $\ker q_{\kkk,j}$ of $W_j^0$. If $h_1,h_2 \in G$ belong to the same connected component $G_i$ of $G$ then $h_1^{-1}G_i$ is a connected component of $G$ which contains the identity, so $h_1^{-1}h_2 \in h_1^{-1}G_i=G^0$, hence $\phi_j(h_1)^{-1}\phi_j(h_2)U=\phi_j(h_1^{-1}h_2)U=U$ by the $\phi_j(G^0)$-invariance of $U$. Thus $g\mapsto \phi_j(g)U$ is constant on each connected component of $G$ and therefore $\{\phi_j(g)U \colon g \in G\}$ is finite. But then $U$ has finite orbit under $\phi_j(G)$ and in particular has finite orbit under $\phi_j(\Gamma)=\{A_\iii^{(j)} \colon \iii \in \Sigma_N^*\}$ whilst having dimension smaller than $\ell_j=\dim W_j^0$; this contradicts the definition of $\ell_j$. We conclude that such a subspace $U$ cannot exist, so the set \eqref{eq:burf-pink} must be nonempty for every $j=1,\ldots,k$ and we deduce the existence of the claimed element $\jjj_2 \in \Sigma_N^*$.

We next assert that there exists $\jjj_1 \in \Sigma_N^*$ such that $g_{\jjj_1}\in G^0$ and such that for all $j=1,\ldots,k$ the endomorphism $\hat\phi_j(g_{\jjj_1}) q_{\kkk,j}\hat\phi_j(g_{\jjj_2})$ is proximal and satisfies $\hat\phi_j(g_{\jjj_1})q_{\kkk,j}\hat\phi_j(g_{\jjj_2}) v_j \notin \ker b_{1,j}$. Clearly $\hat\phi_j(g) q_{\kkk,j}\hat\phi_j(g_{\jjj_2}) \in \ned(W_j^0)$ has rank one  for every $g \in G^0$ since $q_{\kkk,j}$ is of rank one and $\hat\phi_j(g)$ and $\hat\phi_j(g_{\jjj_2})$ are invertible, so for $\hat\phi_j(g) q_{\kkk,j}\hat\phi_j(g_{\jjj_2})$ to be proximal it is necessary and sufficient that it be non-nilpotent, which by rank considerations is equivalent to the condition $(\hat\phi_j(g) q_{\kkk,j}\hat\phi_j(g_{\jjj_2}))^2\neq 0$. Thus to obtain the existence of $\jjj_1$ it suffices to show that 
\begin{equation}\label{eq:black-hand}\bigcap_{j=1}^k \left\{g \in G^0 \colon \hat\phi_j(g) q_{\kkk,j}\hat\phi_j(g_{\jjj_2}) v_j \notin \ker b_{1,j}\right\} \cap \left\{g \in G^0 \colon\hat\phi_j(g) q_{\kkk,j}\hat\phi_j(g_{\jjj_2}))^2\neq 0\right\} \end{equation}
is nonempty and Zariski open. We must therefore likewise show that each set
\[\left\{g \in G^0 \colon \hat\phi_j(g) q_{\kkk,j}\hat\phi_j(g_{\jjj_2}) v_j \notin \ker b_{1,j}\right\}\]
is nonempty and Zariski open, and that each set
\[\left\{g \in G^0 \colon(\hat\phi_j(g) q_{\kkk,j}\hat\phi_j(g_{\jjj_2}))^2\neq 0\right\}\]
is also nonempty and Zariski open.  A vector belongs to $\ker b_{1,j}$ if and only if it orthogonal to every element of a basis for the orthogonal complement of $\ker b_{1,j}$, and the latter is obviously a Zariski closed condition, so the first of the two sets is Zariski open. The Zariski openness of the second set is obvious. 

If the first set is empty for some $j$, then by the same arguments as were used previously the vector space $\spann \{\hat\phi_j(g)q_{\kkk,j}\hat\phi_j(g_{\jjj_2})v_j\colon g \in G^0\}$ would be a $\phi_j(G^0)$-invariant proper subspace of $W_j^0$, the existence of which would contradict the definition of $\ell_j$. We conclude that the first set is nonempty for each $j=1,\ldots,k$. If $(\hat\phi_j(g) q_{\kkk,j}\hat\phi_j(g_{\jjj_2})^2=0$ for some $g \in G^0$ then since $q_{\kkk,j}$ has rank one and $\hat\phi_j(g)$ and $\hat\phi_j(g_{\jjj_2})$ are invertible, it must be the case that the one-dimensional image subspace $q_{\kkk,j}W_j^0$ is mapped into the kernel of $q_{\kkk,j}\hat\phi_j(g_{\jjj_2})$ by $\hat\phi_j(g)$. If this holds for every $g \in G^0$ then we deduce that $\spann \bigcup_{g \in G^0} \hat\phi_j(g)q_{\kkk,j}W_j^0\subseteq \ker q_{\kkk,j}\hat\phi_j(g_{\jjj_2}) \neq W_j^0$ is a $\phi_j(G^0)$-invariant proper subspace of $W_j^0$, which is again impossible. We deduce the nonemptiness and Zariski openness of the set \eqref{eq:black-hand} and the existence of $\jjj_1$ follows.

We have shown that there exist $\jjj_1,\jjj_2 \in \Sigma_N^* $ such that $g_{\jjj_1},g_{\jjj_2} \in G^0$ and such that for every $j=1,\ldots,k$, $\hat\phi_j(g_{\jjj_1}) q_{\kkk,j} \hat\phi_j(g_{\jjj_2}) \in \ned(W_j^0)$ is proximal and of rank one and satisfies $\hat\phi_j(g_{\jjj_1}) q_{\kkk,j} \hat\phi_j(g_{\jjj_2})v_j \notin \ker b_{1,j}$. In particular we necessarily have $V^+(\hat\phi_j(g_{\jjj_1}) q_{\kkk,j} \hat\phi_j(g_{\jjj_2})) \cap \ker b_{1,j} = \{0\}$ and $v_j \notin V^-(\hat\phi_j(g_{\jjj_1}) q_{\kkk,j} \hat\phi_j(g_{\jjj_2}))$. By the openness of the set of proximal endomorphisms in the analytic topology on $\ned(W_j^0)$ together with the continuity of $V^+$ and $V^-$ on that set, it follows that for all sufficiently large $n$ the element $\|\hat\phi_j(g_{\kkk}^n)\|^{-1}\hat\phi_j(g_{\jjj_1}g_{\kkk}^n g_{\jjj_2})$ is proximal and satisfies $V^+(\|\hat\phi_j(g_{\kkk}^n)\|^{-1} \hat\phi_j(g_{\jjj_1}g_{\kkk}^n g_{\jjj_2}))\cap \ker b_{1,j} =\{0\}$ and $ v_j \notin V^-(\|\hat\phi_j(g_{\kkk}^n )\|^{-1}  \hat\phi_j(g_{\jjj_1}g_{\kkk}^n g_{\jjj_2}))$
for all $j=1,\ldots,k$. Fix $n$ large enough that these properties hold and define $\iii:=\jjj_1 \kkk^n \jjj_2$. We then have $\hat\phi_j(g_{\jjj_1} g_\kkk^n g_{\jjj_2}) \in \prox(W_j^0)$ for all $j=1,\ldots,k$, and for every $j=1,\ldots,k$ we also have $V^+(\hat\phi_j(g_{\jjj_1}g_{\kkk}^n g_{\jjj_2}))\cap \ker b_{1,j}=\{0\}$ and  $v_j \notin V^-(\hat\phi_j(g_{\jjj_1}g_{\kkk}^n g_{\jjj_2}))$.
The  limit  $p_{\iii,j}:=\lim_{n \to \infty} \|\hat\phi_j(g_\iii^n)\|^{-1}\hat\phi_j(g_\iii^n)\in \prox(W_j^0)$  has image $V^+(\hat\phi_j(g_\iii))$ and kernel $V^-(\hat\phi_j(g_\iii))$ for each $j=1,\ldots,k$, so in particular $p_{\iii,j}v_j \notin \ker b_{1,j}$ and it follows that $b_{1,j}p_{\iii,j}b_{2,j}\neq 0$. The proof of the first claim is complete.

 We secondly claim that there exist $\kappa>0$ and $m \geq 1$ with the following property: if for each $j=1,\ldots,k$ we are given $b_{1,j},b_{2,j} \in \ned(W_j^0)$, then
\[\max_{\substack{ |\kkk|=m\\g_\kkk \in G^0}} \left\|b_{1,j}\hat\phi_j(g_\kkk) b_{2,j}\right\| \geq \kappa \|b_{1,j}\|\cdot\|b_{2,j}\|\] 
for every $j=1,\ldots,k$. By homogeneity it is clearly sufficient to consider only the case where $\|b_{1,j}\|=\|b_{2,j}\|=1$ for every $j=1,\ldots,k$, and we will prove the claim in this form.

For each $j=1,\ldots,k$ let $S_{\ned(W_j^0)}$ denote the unit sphere of $\ned(W_j^0)$. If $((b_{1,j},b_{2,j}))_{j=1}^k \in \prod_{j=1}^k S_{\ned(W_j^0)}\times S_{\ned(W_j^0)}$ is given, then by the preceding step there exists $\iii \in \Sigma_N^*$ such that $g_\iii \in G^0$ and such that for each $j=1,\ldots,k$ the element $p_{\iii,j}:=\lim_{n\to \infty} \|\hat\phi_j(g_\iii^n)\|^{-1}\hat\phi_j(g_\iii^n) \in \ned(W_j^0)$ is well-defined and satisfies $b_{1,j}p_{\iii,j}b_{2,j} \neq 0$. If $((b_{1,j}',b_{2,j}'))_{j=1}^k\in \prod_{j=1}^k S_{\ned(W_j^0)}\times S_{\ned(W_j^0)}$ is chosen in a sufficiently small open neighbourhood of $((b_{1,j},b_{2,j}))_{j=1}^k$ then we clearly also have $b_{1,j}'p_{\iii,j}b_{2,j}' \neq 0$ for every $j=1,\ldots,k$ for the same word $\iii$. By the compactness of $\prod_{j=1}^k S_{\ned(W_j^0)}\times S_{\ned(W_j^0)}$ it follows that there exist finitely many words $\iii_1,\ldots,\iii_r\in  \Sigma_N^*$ such that $g_{\iii_t} \in G^0$ for every $t=1,\ldots,r$, such that $p_{\iii_t,j}:=\lim_{n\to \infty} \|\hat\phi_j(g_{\iii_t}^n)\|^{-n}\hat\phi_j(g_{\iii_t}^n) \in \ned(W_j^0)$ is well-defined for each $j=1,\ldots,k$ and $t=1,\ldots,r$ and such that for every $((b_{1,j},b_{2,j}))_{j=1}^k \in S_{\ned(W_j^0)}$ there exists $t \in \{1,\ldots,r\}$ such that $b_{1,j}p_{\iii_t,j}b_{2,j} \neq 0$ for all $j=1,\ldots,k$. By compactness and continuity the function $\prod_{j=1}^k S_{\ned(W_j^0)}\times S_{\ned(W_j^0)}\to \mathbb{R}$ defined by
\[((b_{1,j},b_{2,j}))_{j=1}^k \mapsto \min_{1 \leq j \leq k}\max_{1 \leq t \leq r} \|b_{1,j}p_{\iii_t,j}b_{2,j}\|\]
therefore has a nonzero minimum value $\tau>0$, say. 

Let $m\geq 1$ be a natural number divisible by each of $|\iii_1|,\ldots,|\iii_t|$ and choose natural numbers $n_1,\ldots,n_r$ such that $m=n_1|\iii_1|=n_2|\iii_2|=\cdots =n_r |\iii_r|$. By choosing a large integer $\ell\geq 1$ and replacing $m$ with $\ell m$ and each $n_t$ with $\ell n_t$ if required, we may without loss of generality suppose that $n_1,\ldots,n_r$ are large enough that
\[ \max_{1 \leq j \leq k}\max_{1 \leq t \leq r} \left\|p_{\iii_t,j} - \|\hat\phi_j(g_{\iii_t}^{n_t})\|^{-1}\hat\phi_j(g_{\iii_t}^{n_t}) \right\|<\frac{\tau}{2}.\]
Define $\kkk_t:=\iii_t^{n_t} \in \Sigma_N^*$ for each $t$ and observe that each $\kkk_t$ has the same length $m$ and satisfies $g_{\kkk_t}=g_{\iii_t}^{n_t}\in G^0$. We easily see that
\[ \min_{1 \leq j \leq k} \max_{1 \leq t \leq r}\|b_{1,j} \hat\phi_j(g_{\kkk_t}) b_{2,j}\| > \frac{\tau}{2} \|\hat\phi_j(g_{\kkk_t})\|\]
for all $((b_{1,j},b_{2,j}))_{j=1}^k \in \prod_{j=1}^k S_{\ned(W_j^0)}\times S_{\ned(W_j^0)}$, so if we define
\[\kappa:= \frac{\tau}{2} \min_{1 \leq j\leq k}\min_{1 \leq t \leq r}\|\hat\phi_j(g_{\kkk_t})\|\]
then we have proved our second claim.

We may now prove the theorem. Let $\iii,\jjj \in \Sigma_N^*$ be arbitrary. There exist $(W_j)_{j=1}^k, (W_j')_{j=1}^k \in \mathcal{W}$ such that
\[\Phi(\iii)=\prod_{j=1}^k \left\|\phi_j(g_\iii)|_{W_j}\right\|^{\beta_j},\qquad \Phi(\jjj)=\prod_{j=1}^k \left\|\phi_j(g_\jjj)|_{W_j'}\right\|^{\beta_j}.\]
By the hypothesis that $\mathcal{W}$ is primitive there exists $p \geq 1$ not depending on $\iii$, $\jjj$, $(W_j)_{j=1}^k$ or $(W_j')_{j=1}^k$ such that we may choose $\iii_1,\iii_2,\jjj_1,\jjj_2 \in \Sigma_N^*$ with $|\iii_1|=|\iii_2|=|\jjj_1|=|\jjj_2|=p$ satisfying
\begin{align*}(\phi_j(g_{\jjj_2})W_j^0)_{j=1}^k &= (W_j')_{j=1}^k,\\
(\phi_j(g_{\jjj_1})\phi_j(g_\jjj)W_j')_{j=1}^k &= (W_j^0)_{j=1}^k,\\
(\phi_j(g_{\iii_2})W_j^0)_{j=1}^k &= (W_j)_{j=1}^k,\\
(\phi_j(g_{\iii_1})\phi_j(g_\iii)W_j))_{j=1}^k&=(W_j^0)_{j=1}^k.\end{align*}
In particular $\phi_j(g_{\iii_1}g_{\iii}g_{\iii_2})|_{W_j^0}$ and $\phi_j(g_{\jjj_1}g_{\jjj}g_{\jjj_2})|_{W_j^0}$ are endomorphisms of $W_j^0$ for each $j=1,\ldots,k$, and
\[\prod_{j=1}^k \|\phi_j(g_{\iii_1}g_{\iii}g_{\iii_2})|_{W_j^0}\|^{\beta_j} \geq \left(\min_{|\ellellell|=p} \prod_{j=1}^k \|\phi_j(g_{\ellellell})^{-1}\|^{-\beta_j}\right)^2 \prod_{j=1}^k \left\|\phi_j(g_{\iii})|_{W_j}\right\|^{\beta_j} = \varepsilon \Phi(\iii)\]
and
\[\prod_{j=1}^k \|\phi_j(g_{\jjj_1}g_{\jjj}g_{\jjj_2})|_{W_j^0}\|^{\beta_j} \geq \left(\min_{|\ellellell|=p} \prod_{j=1}^k \|\phi_j(g_{\ellellell})^{-1}\|^{-\beta_j}\right)^2 \prod_{j=1}^k \left\|\phi_j(g_{\jjj})|_{W_j'}\right\|^{\beta_j} = \varepsilon \Phi(\jjj),\]
say, where
\[\varepsilon:=\left(\min_{|\ellellell|=p} \prod_{j=1}^k \|\phi_j(g_{\ellellell})^{-1}\|^{-\beta_j}\right)^2.\]
Now by the previous step there exists $\kkk \in \Sigma_N^*$ with $|\kkk|=m$ such that $g_\kkk \in G^0$ (and hence $\phi_j(g_\kkk)W_j^0=W_j^0$ for every $j=1,\ldots,k$) and such that 
\begin{eqnarray*}\lefteqn{\prod_{j=1}^k \left\|\left(\phi_j(g_{\iii_1}g_{\iii} g_{\iii_2})|_{W_j^0}\right)\left(\phi_j(g_{\kkk})|_{W_j^0}\right)\left( \phi_j(g_{\jjj_1}g_{\jjj} g_{\jjj_2})|_{W_j^0}\right) \right\|^{\beta_j}} & &\\
& & \geq \kappa^{\sum_{j=1}^k\beta_j}  \left(\prod_{j=1}^k \left\|\phi_j(g_{\iii_1}g_{\iii} g_{\iii_2})|_{W_j^0}\right\|^{\beta_j}\right)\left(\prod_{j=1}^k \left\|  \phi_j(g_{\jjj_1}g_{\jjj} g_{\jjj_2})|_{W_j^0} \right\|^{\beta_j}\right)\\
& & \geq \kappa^{\sum_{j=1}^k \beta_j} \varepsilon^2\Phi(\iii)\Phi(\jjj).\end{eqnarray*}
Defining $K:=\max_{|\ellellell|=p} \Phi(\ellellell)<\infty$ we have 
\begin{eqnarray*}
\lefteqn{\prod_{j=1}^k \left\|\left(\phi_j(g_{\iii_1}g_{\iii} g_{\iii_2})|_{W_j^0}\right)\left(\phi_j(g_{\kkk})|_{W_j^0}\right)\left( \phi_j(g_{\jjj_1}g_{\jjj} g_{\jjj_2})|_{W_j^0}\right) \right\|^{\beta_j}}& &\\
& &=\prod_{j=1}^k \left\|\phi_j(g_{\iii_1}g_{\iii} g_{\iii_2} g_{\kkk} g_{\jjj_1}g_{\jjj} g_{\jjj_2})|_{W_j^0} \right\|^{\beta_j}\\
& & \leq \max_{(W_j)_{j=1}^k \in \mathcal{W}} \prod_{j=1}^k \left\|\phi_j(g_{\iii_1}g_{\iii} g_{\iii_2} g_{\kkk} g_{\jjj_1}g_{\jjj} g_{\jjj_2})|_{W_j} \right\|^{\beta_j}\\
& &=\Phi(\iii_1\iii \iii_2 \kkk \jjj_1\jjj \jjj_2) \leq \Phi(\iii_1)\Phi(\jjj_2) \Phi(\iii \iii_2\kkk\jjj_1\jjj) \leq K^2 \Phi(\iii \iii_2\kkk \jjj_1 \jjj),\end{eqnarray*}
and since $|\iii_2 \kkk \jjj_1|=m+2p$, we have obtained
\[\max_{|\ellellell|=m+2p} \Phi(\iii \ellellell \jjj) \geq \Phi(\iii\iii_2\kkk\jjj_1\jjj) \geq K^{-2}\varepsilon^2\kappa^{\sum_{j=1}^k \beta_j} \Phi(\iii)\Phi(\jjj)\]
where $m$, $p$, $K$, $\kappa$ and $\varepsilon$ do not depend on $\iii,\jjj \in \Sigma_N^*$. The theorem is proved.
\end{proof}




\section{Proofs of main results}\label{se:bananas}

\subsection{Proof of Theorem \ref{th:main}}

Let the integers $k$ and $N$, vector spaces $V_j$, tuples $(A_1^{(j)},\ldots,A_N^{(j)})\in \GL(V_j)^N$, real numbers $\beta_j>0$, potential $\Phi \colon \Sigma_N^* \to (0,+\infty)$ and totally ergodic equilibrium state $\mu \in \mathcal{M}_\sigma(\Sigma_N)$ be as in the statement of Theorem \ref{th:main}. By Theorem \ref{th:reducks} we may without loss of generality assume that every $(A_1^{(j)},\ldots,A_N^{(j)})\in \GL(V_j)^N$ is irreducible and has simple top Lyapunov exponent with respect to $\mu$. By Theorem \ref{th:prox-unique}(i) we find that $\mu$ is the equilibrium state of a unique potential $\Phi_{\mathcal{W}}$ such that $\mathcal{W} \subseteq \prod_{j=1}^k \Gr_{\ell_j}(V_j)$ is a transitive subspace class, where for each $j=1,\ldots,k$ the integer $\ell_j$ is the dimension of the smallest nonzero subspace of $V_j$ with finite orbit under the action of $(A_1^{(j)},\ldots,A_N^{(j)})$. It follows by Theorem \ref{th:bomo}(i) that there exists $C>0$ such that
\[C^{-1}\Phi_{\mathcal{W}}(\iii) \leq e^{|\iii|P(\Phi_{\mathcal{W}})}\mu([\iii]) \leq C\Phi_{\mathcal{W}}(\iii)\]
for every $\iii \in \Sigma_N^*$. By Theorem \ref{th:prox-unique}(ii)--(iii) the potential $\Phi_{\mathcal{W}}$ satisfies the hypotheses of Theorem \ref{th:core0}, so there exist an integer $m$ and constant $\delta>0$ such that 
\[\max_{|\kkk|=m} \Phi_{\mathcal{W}}(\iii\kkk\jjj) \geq \delta \Phi_{\mathcal{W}}(\iii)\Phi_{\mathcal{W}}(\jjj)\]
for all $\iii,\jjj \in \Sigma_N^*$. Hence for every $\iii,\jjj \in \Sigma_N^*$ we have
\begin{align*}\delta \mu([\iii])\mu([\jjj]) &\leq C^{2}\delta e^{-(|\iii|+|\jjj|)P(\Phi_{\mathcal{W}})} \Phi_{\mathcal{W}}(\iii)\Phi_{\mathcal{W}}(\jjj)\\
& \leq C^{2}e^{-(|\iii|+|\jjj|)P(\Phi_{\mathcal{W}})} \max_{|\kkk|=m} \Phi_{\mathcal{W}}(\iii\kkk\jjj)\\
&\leq C^3e^{|\kkk|P(\Phi_{\mathcal{W}})}  \max_{|\kkk|=m} \mu([\iii\kkk\jjj]) \\
&\leq C^3e^{mP(\Phi_{\mathcal{W}})}  \sum_{|\kkk|=m} \mu([\iii\kkk\jjj])\\
&= C^3e^{mP(\Phi_{\mathcal{W}})} \mu([\iii]\cap \sigma^{-m-|\iii|}[\jjj])
\end{align*}
so that
\begin{equation}\label{eq:farty-red}\mu([\iii]\cap \sigma^{-m-|\iii|}[\jjj])\geq  \kappa \mu([\iii])\mu([\jjj])  \end{equation}
where $\kappa:=C^{-3}\delta e^{-mP(\Phi_{\mathcal{W}})}$, and also 
\begin{align*}\mu([\iii]\cap \sigma^{-m-|\iii|}[\jjj]) &=  \sum_{|\kkk|=m} \mu([\iii\kkk\jjj]) \\
&\leq C\sum_{|\kkk|=m}e^{-(|\iii|+|\kkk|+|\jjj|)P(\Phi_{\mathcal{W}})} \Phi_{\mathcal{W}}(\iii\kkk\jjj)\\
&\leq Ce^{-(|\iii|+|\jjj|)P(\Phi_{\mathcal{W}})} \Phi_{\mathcal{W}}(\iii)\Phi_{\mathcal{W}}(\jjj) \left(\sum_{|\kkk|=m} e^{-|\kkk|P(\Phi_{\mathcal{W}})}   \Phi_{\mathcal{W}}(\kkk)\right) \\
&\leq C^4\mu([\iii])\mu([\jjj]) \left(\sum_{|\kkk|=m} \mu([\kkk])\right)\\
& = C^4\mu([\iii])\mu([\jjj])\end{align*}
so that
\begin{equation}\label{eq:sandbork}\mu([\iii]\cap \sigma^{-m-|\iii|}[\jjj]) \leq K \mu([\iii])\mu([\jjj])\end{equation}
where $K:= C^4$. To prove the theorem we will combine inequalities \eqref{eq:farty-red} and \eqref{eq:sandbork} with theorems of R.C. Bradley, N.A. Friedman and D.S. Ornstein in a manner similar to earlier works such as \cite{Pi20,Wa05}.

For every integer $n \geq 1$ let $\mathcal{B}_{1,n}$ be the finite $\sigma$-algebra on $\hat\Sigma_N$ generated by the set
\[\left\{[\iii]\subset \hat\Sigma_N \colon |\iii|=n\right\}.\]
For every pair of integers $n,m \in \mathbb{Z}$ such that $n \leq m$ define $\mathcal{B}_{n,m}:=\hat\sigma^{n-1}\mathcal{B}_{1,m+1-n}$. Thus $\mathcal{B}_{n,m}$ is precisely the $\sigma$-algebra generated by cylinders of the form
\[\left\{(x_\ell)_{\ell \in \mathbb{Z}} \colon x_i = y_i\text{ for all }i = n,\ldots,m\right\}\]
where the finite sequence $(y_i)_{i=n}^m$ varies over $\{1,\ldots,N\}^{m-n+1}$. For every $n \in \mathbb{Z}$ define also 
\[\mathcal{B}_{-\infty,n}:=\bigvee_{m=-\infty}^n \mathcal{B}_{m,n},\qquad \mathcal{B}_{n,+\infty}:=\bigvee_{m=n}^\infty \mathcal{B}_{n,m}.\] 
The following is a special case of a theorem of R.C. Bradley (\cite[Theorem 4.1(2)]{Br05}):
\begin{theorem}\label{th:excuse-me-doctor-gunson-but-why-do-you-have-a-slice-of-ham-in-your-handbag}
Let $\hat\mu$ be a $\hat\sigma$-invariant measure on $\hat\Sigma_N$ such that for some integer $m \geq1$ the conditions
\[\inf_{\substack{A \in \mathcal{B}_{-\infty}^0, B \in \mathcal{B}_m^\infty\\ \hat\mu(A),\hat\mu(B) \neq 0}} \frac{\hat\mu(A\cap B)}{\hat\mu(A)\hat\mu(B)}>0,\qquad \sup_{\substack{A \in \mathcal{B}_{-\infty}^0, B \in \mathcal{B}_{m}^\infty\\ \hat\mu(A),\hat\mu(B) \neq 0}} \frac{\hat\mu(A\cap B)}{\hat\mu(A)\hat\mu(B)} <\infty\]
are both satisfied.  Then
\[\lim_{n \to \infty}  \sup_{\substack{A \in \mathcal{B}_{-\infty}^0, B \in \mathcal{B}_{n}^\infty\\ \hat\mu(A),\hat\mu(B) \neq 0}} \left|\frac{\hat\mu(A \cap B)}{\hat\mu(A)\hat\mu(B)}-1\right|=0.\]
\end{theorem}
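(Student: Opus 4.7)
The plan is to prove the uniform convergence by combining monotonicity, a submultiplicativity estimate, and a reverse martingale argument that exploits both the upper and lower bounds in the hypothesis. I define the $\psi$-mixing coefficient
\[\psi(n) := \sup_{\substack{A \in \mathcal{B}_{-\infty}^0,\, B \in \mathcal{B}_n^\infty \\ \hat\mu(A),\,\hat\mu(B) \ne 0}} \left|\frac{\hat\mu(A \cap B)}{\hat\mu(A)\hat\mu(B)} - 1\right|,\]
and observe that $\psi$ is non-increasing in $n$ because the family $\mathcal{B}_n^\infty$ shrinks as $n$ grows. The hypothesis gives $\psi(m) \le \max(\psi^*(m) - 1,\,1 - \psi'(m)) < \infty$, so the limit $L := \lim_{n\to\infty}\psi(n) \in [0,\psi(m)]$ exists; the conclusion of the theorem is exactly $L = 0$.

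The main algebraic step is a submultiplicativity-type estimate. Given $A \in \mathcal{B}_{-\infty}^0$ and $B \in \mathcal{B}_{n_1+m+n_2}^\infty$, I would insert the finite $\sigma$-algebra $\mathcal{B}_{n_1+1}^{n_1+m}$ generated by the $m$ intermediate coordinates (with atoms $C$) and decompose
\[\hat\mu(A \cap B) - \hat\mu(A)\hat\mu(B) = \sum_C \Bigl\{\bigl[\hat\mu(A \cap C \cap B) - \hat\mu(A \cap C)\hat\mu(B)\bigr] + \hat\mu(B)\bigl[\hat\mu(A \cap C) - \hat\mu(A)\hat\mu(C)\bigr]\Bigr\}.\]
The first bracket compares $A \cap C \in \mathcal{B}_{-\infty}^{n_1+m}$ with $B$ across a gap of $n_2$ coordinates, and is thus bounded by $\psi(n_2)\hat\mu(A \cap C)\hat\mu(B)$ after stationarity; the second bracket is bounded by $\psi(n_1)\hat\mu(A)\hat\mu(C)$ at gap $n_1$. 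Combining with $\hat\mu(A \cap C) \le (1+\psi(n_1))\hat\mu(A)\hat\mu(C)$ and summing on $C$ yields the subadditive bound $\psi(n_1+m+n_2) \le \psi(n_1) + (1+\psi(n_1))\psi(n_2)$. To promote this to genuine decay, I would iterate the decomposition with \emph{two} intermediate windows of $m$ coordinates, one near each end of the gap, and apply Cauchy--Schwarz weighted by the two-sided $L^\infty$-control coming from $\psi^*(m)$ and $\psi'(m)$; this produces an honestly submultiplicative estimate of the form $\psi(n_1 + 2m + n_2) \le \kappa\,\psi(n_1)\psi(n_2)$, with $\kappa = \kappa(\psi^*(m),\psi'(m))$.

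To initialise the iteration, I would invoke the reverse martingale convergence theorem. For each fixed cylinder $B$ of a given length, the sequence $\hat\mu(B \mid \mathcal{B}_{-\infty}^{-n})$ indexed by $n \to \infty$ is a reverse martingale bounded between $\psi'(m)\hat\mu(B)$ and $\psi^*(m)\hat\mu(B)$, so L\'evy's downward theorem provides $\hat\mu$-a.e.\ convergence to a limit measurable with respect to the tail $\bigcap_n \mathcal{B}_{-\infty}^{-n}$. The two-sided bounds in the hypothesis force this tail to be $\hat\mu$-trivial --- any tail event $E$ would satisfy $1/\hat\mu(E) \in [\psi'(m),\psi^*(m)]$ after comparing $E$ with a shifted copy of itself, forcing $\hat\mu(E) \in \{0,1\}$ --- so the limit equals $\hat\mu(B)$ a.e. Since for each fixed length there are only finitely many cylinders, bounded convergence upgrades pointwise convergence to uniform smallness, producing a scale $n_0$ at which $\psi(n_0) < 1/\kappa$; iterating the submultiplicativity from the previous step then forces doubly-exponential decay of $\psi$.

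The main obstacle will be promoting subadditivity to genuine submultiplicativity: a single-window decomposition gives only a sum $\psi(n_1) + \psi(n_2)$, which is consistent with $L > 0$, and obtaining the product $\psi(n_1)\psi(n_2)$ requires exploiting the two-sided nature of the hypothesis in an essential way, either through the second insertion of intermediate coordinates sketched above or via the equivalent passage through the $\rho$-mixing coefficient and Cauchy--Schwarz. Once the two ingredients are in place the conclusion follows by routine iteration and monotonicity.
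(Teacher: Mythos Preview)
The paper does not prove this theorem: it is quoted verbatim as a special case of Bradley's Theorem~4.1(2) in \cite{Br05} and then applied. So there is no ``paper's own proof'' to compare to beyond the citation; you are attempting to reprove a result the author deliberately imported.

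Your sketch has the right architecture (monotonicity, a recursive inequality, an initialisation step, iteration), but two of the load-bearing steps do not work as written.

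First, the passage from subadditivity to the submultiplicative bound $\psi(n_1+2m+n_2)\le\kappa\,\psi(n_1)\psi(n_2)$ is asserted (``apply Cauchy--Schwarz weighted by the two-sided $L^\infty$-control'') rather than proved, and I do not see how inserting a second window and invoking Cauchy--Schwarz produces a product of $\psi$-values rather than another sum. Bradley's actual argument treats $\psi'$ and $\psi^*$ separately with quite different mechanisms; the product structure you want does not fall out of a single unified estimate.

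Second, and more seriously, the initialisation step fails. The reverse martingale theorem gives $\hat\mu(B\mid\mathcal{B}_{-\infty}^{-n})\to\hat\mu(B)$ almost everywhere and (via boundedness) in every $L^p$ for $p<\infty$, but \emph{not} in $L^\infty$. Since $\psi(n_0)$ is precisely an $L^\infty$-type quantity --- it is the essential supremum of $|\hat\mu(B\mid\mathcal{B}_{-\infty}^{0})/\hat\mu(B)-1|$ over sets $B$ after a shift --- you cannot conclude $\psi(n_0)<1/\kappa$ from the martingale argument, no matter how many cylinders you take. The tail-triviality step is also broken: the inequality $1/\hat\mu(E)\in[\psi'(m),\psi^*(m)]$ does not force $\hat\mu(E)\in\{0,1\}$, it merely bounds $\hat\mu(E)$ away from $0$.

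In short: the result is genuinely non-trivial, your outline captures the coarse shape but not the mechanism, and for the purposes of this paper the correct move is exactly what the author did --- cite Bradley.
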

Now, the natural extension $\hat\mu \in \mathcal{M}_{\hat\sigma}(\hat\Sigma_N)$ of the equilibrium state $\mu$ satisfies
\begin{align*}\inf_{\substack{A \in \mathcal{B}_{-\infty}^0, B \in \mathcal{B}_{m+1}^\infty\\ \hat\mu(A),\hat\mu(B) \neq 0}} \frac{\hat\mu(A \cap B)}{\hat\mu(A)\hat\mu(B)}
& = \inf_{\substack{n_1, n_2 \geq 1}}\inf_{\substack{A \in \mathcal{B}_{1-n_1,0}\\ B \in \mathcal{B}_{m+1,m+n_2} } }\frac{\hat\mu(A \cap B)}{\hat\mu(A)\hat\mu(B)}\\
& = \inf_{n_1,n_2 \geq 1}\inf_{\substack{A \in \mathcal{B}_{1,n_1}\\ B \in \mathcal{B}_{m+n_1+1,m+n_1+n_2} } }\frac{\hat\mu(A \cap B)}{\hat\mu(A)\hat\mu(B)}\\
&=\inf_{n_1,n_2 \geq 1}\inf_{\substack{|\iii|=n_1\\|\jjj|=n_2}} \frac{\hat\mu([\iii]\cap\sigma^{-m-|\iii|}[\jjj])}{\hat\mu([\iii])\hat\mu([\jjj])} \geq \kappa>0\end{align*}
by \eqref{eq:farty-red}, and likewise
\[\sup_{\substack{A \in \mathcal{B}_{-\infty}^0, B \in \mathcal{B}_{m+1}^\infty\\ \hat\mu(A),\hat\mu(B) \neq 0}} \frac{\hat\mu(A \cap B)}{\hat\mu(A)\hat\mu(B)}=\sup_{n_1,n_2 \geq 1}\sup_{\substack{|\iii|=n_1\\|\jjj|=n_2}} \frac{\hat\mu([\iii]\cap\sigma^{-m-|\iii|}[\jjj])}{\hat\mu([\iii])\hat\mu([\jjj])}\leq K<\infty\]
by \eqref{eq:sandbork}. Theorem \ref{th:excuse-me-doctor-gunson-but-why-do-you-have-a-slice-of-ham-in-your-handbag} therefore applies and yields 
\[\lim_{n \to \infty}  \sup_{\substack{A \in \mathcal{B}_{-\infty}^0, B \in \mathcal{B}_{n}^\infty\\ \hat\mu(A),\hat\mu(B) \neq 0}} \left|\frac{\hat\mu(A \cap B)}{\hat\mu(A)\hat\mu(B)}-1\right|=0\]
which clearly implies the result 
\[\lim_{n \to \infty} \sup_{\iii,\jjj \in \Sigma_N^*} \left|\frac{\mu([\iii] \cap \sigma^{-n-|\iii|} [\jjj])}{\mu([\iii])\mu([\jjj])} -1\right|=0\]
which is the first assertion of Theorem \ref{th:main}.

To deduce the Bernoulli property of $\hat\mu$ we will apply a theorem of N.A. Friedman and D.S. Ornstein. We recall that a measure space $(X,\mathcal{F},m)$ is called a Lebesgue space if it there exists a measure space isomorphism between $(X,\mathcal{F},m)$ and Lebesgue measure on a bounded interval equipped with the $\sigma$-algebra of Lebesgue measurable sets. If $(X,\mathcal{F},m)$ is a Lebesgue space, $Z \subseteq X$ has nonzero measure, $\mathcal{F}_Z:=\{A \cap Z \colon A \in \mathcal{F}\}$ and $m_Z$ is the measure on $(X,\mathcal{F}_Z)$ defined by $m_Z(A):=m(Z \cap A)$ then $(Z,\mathcal{F}_Z,m_Z)$ is also a Lebesgue space. 

If $T$ is an invertible measure-preserving transformation of a Lebesgue probability space $(X,\mathcal{F},m)$ then a partition $\mathcal{P}$ of $X$ is defined to be any finite set $\mathcal{P}=\{P_1,\ldots,P_n\}\subset \mathcal{F}$ such that, up to measure zero, $X$ is the disjoint union of the sets $P_1,\ldots,P_n$. Given a partition $\mathcal{P}$, for each $k \in \mathbb{Z}$ we may define a new partition $T^k \mathcal{P}:=\{T^kP_1,\ldots,T^kP_n\}$ of $X$ in the obvious fashion. If $\mathcal{P}_1,\ldots,\mathcal{P}_k$ are partitions then we let $\bigvee_{j=1}^k \mathcal{P}_j$ denote the partition $\{A_1 \cap A_2 \cap \cdots \cap A_k \colon A_j \in \mathcal{P}_j\}$. We will say that a partition $\mathcal{P}$ of $X$ is \emph{$\varepsilon$-independent} of a partition $\mathcal{Q}$ of $X$ if there exists a subset $\mathcal{Q}_\varepsilon$ of $\mathcal{Q}$ such that
\[m\left(\bigcup_{Q \in \mathcal{Q}_\varepsilon} Q\right)>1-\varepsilon\]
and
\[\max_{P \in \mathcal{P}} \max_{Q \in \mathcal{Q}_{\varepsilon}} \left|\frac{m(P \cap Q)}{m(Q)} - m(P)\right|<\varepsilon.\]
A partition $\mathcal{P}$ is called a \emph{weak Bernoulli partition} if for every $\varepsilon>0$ there exists an integer $k_\varepsilon \geq 0$ such that for all $n \geq 1$ the partition $\bigvee_{j=k_\varepsilon+1}^{k_\varepsilon +n} T^j \mathcal{P}$ is $\varepsilon$-independent of $\bigvee_{j=1-n}^0 T^j\mathcal{P}$, or equivalently if $\bigvee_{j=1-n}^{0}T^{j}\mathcal{P}$ is $\varepsilon$-independent of $\bigvee_{j=-2n-k_\varepsilon+1}^{-n-k_\varepsilon} T^{j} \mathcal{P}$.
The following theorem paraphrases a celebrated result of N.A. Friedman and D.S. Ornstein \cite{FrOr70}:
\begin{theorem}
Let $\mathbb{P}=(\sum_{i=1}^n p_i \delta_i)^{\mathbb{Z}}$ be a Bernoulli measure on $\hat\Sigma_N$ for some $N \geq 2$ and some nondegenerate probability vector $(p_1,\ldots,p_N)$ and let $\overline{\mathcal{B}}_{\mathbb{P}}$ denote the completion of the Borel $\sigma$-algebra on $\hat\Sigma_N$ with respect to $\mathbb{P}$. Let $T$ be an invertible measure-preserving transformation of a Lebesgue probability space $(X,\mathcal{F},m)$ which admits a weak Bernoulli partition. Then there exists a measure space isomorphism $\phi \colon X \to \hat\Sigma_N$ such that $\phi \circ T = \hat\sigma \circ \phi$.
 \end{theorem}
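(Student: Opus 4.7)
This is the classical Friedman-Ornstein theorem and a complete proof is beyond the scope of the paper; my plan is to sketch the route from a weak Bernoulli partition to the conclusion, reducing the problem to Ornstein's isomorphism theorem for Bernoulli shifts, which characterises Bernoulli processes among finite-state stationary processes. Without loss of generality I would assume the weak Bernoulli partition $\mathcal{P}=\{P_1,\ldots,P_n\}$ is a generator for $T$: if it is not, one applies the theorem to the factor of $(X,\mathcal{F},m,T)$ that $\mathcal{P}$ generates, or works with a suitable refinement which may be checked to remain weak Bernoulli.

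My first step would be the symbolic coding. Define $\psi \colon X \to \hat\Sigma_n$ by $\psi(x)_k:=i$ whenever $T^k x \in P_i$; this map is Borel measurable and satisfies $\psi \circ T = \hat\sigma \circ \psi$, and when $\mathcal{P}$ is a generator it is an isomorphism of Lebesgue spaces from $(X,\mathcal{F},m)$ onto $(\hat\Sigma_n,\overline{\mathcal{B}}_\nu,\nu)$, where $\nu:=\psi_* m$. The problem therefore reduces to showing that the shift-invariant measure $\nu$ is measurably isomorphic, via a map intertwining $\hat\sigma$ with itself, to some Bernoulli measure on $\hat\Sigma_N$.

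The second step would be to translate the weak Bernoulli hypothesis on $\mathcal{P}$ into Ornstein's very weak Bernoulli (equivalently, finitely determined) property for the associated symbolic process. Weak Bernoulli provides $\varepsilon$-independence between bounded blocks of past and future atoms, whereas very weak Bernoulli asks that for typical pasts the conditional distribution of a future block be $\bar d$-close to its unconditional distribution. A direct verification should show that weak Bernoulli implies very weak Bernoulli: the good subset $\mathcal{Q}_\varepsilon$ of past atoms appearing in the definition of $\varepsilon$-independence supplies the set of typical pasts, while the near-equality of conditional and unconditional probabilities on this set yields $\bar d$-smallness after summing over the finite alphabet.

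The main obstacle, and the step I would cite rather than attempt to reprove, is Ornstein's isomorphism theorem itself: any stationary process with a very weak Bernoulli generator is measurably isomorphic to a Bernoulli shift of equal entropy. Its proof is a deep coding argument which exploits the $\bar d$-metric on finite-state processes together with an inductive block-matching scheme. Granted this theorem, the process $(\hat\Sigma_n,\hat\sigma,\nu)$ is measurably isomorphic to some Bernoulli shift $(\hat\Sigma_N,\hat\sigma,\mathbb{P})$, and composing that isomorphism with $\psi$ produces the desired $\phi \colon X \to \hat\Sigma_N$ intertwining $T$ with $\hat\sigma$.
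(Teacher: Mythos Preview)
The paper does not prove this theorem: it is stated as a paraphrase of the celebrated result of Friedman and Ornstein \cite{FrOr70} and is invoked as a black box in the deduction of the Bernoulli property from $\psi$-mixing. Your sketch therefore goes beyond what the paper itself provides, and the outline you give --- symbolic coding by the partition, passage from weak Bernoulli to very weak Bernoulli, and appeal to Ornstein's isomorphism theorem --- is the standard route and is correct in spirit.

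One point deserves comment. Your handling of the case where $\mathcal{P}$ is not a generator is slightly glib: the Friedman--Ornstein theorem as usually stated does \emph{not} assume the weak Bernoulli partition generates, and the conclusion is that the full system (not merely the factor generated by $\mathcal{P}$) is isomorphic to a Bernoulli shift. The resolution is that the factor generated by $\mathcal{P}$ is Bernoulli by your argument, and then one invokes Ornstein's further result that a system having a Bernoulli factor of full entropy is itself Bernoulli; alternatively one uses that the weak Bernoulli property is inherited by the join of $\mathcal{P}$ with any finite partition, allowing one to build an increasing sequence of Bernoulli factors exhausting the $\sigma$-algebra. Either way this is a genuine additional ingredient, not merely ``working with a suitable refinement''. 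In the paper's application this issue does not arise, since the partition into length-one cylinders manifestly generates $(\hat\Sigma_N,\hat\mu,\hat\sigma)$.
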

 If $\mu$ is a totally ergodic generalised matrix equilibrium state on $\Sigma_N$ for some $N \geq 2$, let $\hat\mu$ denote its natural extension to $\hat\Sigma_N$ and let $\overline{\mathcal{B}}_{\hat\mu}$ denote the completion of the Borel $\sigma$-algebra on $\hat\Sigma_N$ with respect to $\hat\mu$. Let $\mathcal{P}$ denote the partition of $\hat\Sigma_N$ into  the $N$ sets $[i]:=\{(x_\ell)_{\ell \in \mathbb{Z}} \colon x_1=i\}$ for $i=1,\ldots,N$. Since
\[\lim_{n \to \infty} \sup_{\iii,\jjj \in \Sigma_N^*} \left|\frac{\hat\mu([\iii] \cap \hat\sigma^{-n-|\iii|} [\jjj])}{\hat\mu([\iii])\hat\mu([\jjj])} -1\right|=0,\]
for every $\varepsilon>0$ we may choose $k_\varepsilon \geq 1$ such that  
\[ \sup_{\iii,\jjj \in \Sigma_N^*} \left|\frac{\hat\mu([\iii] \cap \hat\sigma^{-k_\varepsilon-|\iii|} [\jjj])}{\hat\mu([\iii])\hat\mu([\jjj])} -1\right|<\varepsilon.\]
For each $n \geq 1$ the partitions  $\bigvee_{j=1-n}^{0} \hat\sigma^{j} \mathcal{P}$ and $\bigvee_{j=-2n-k_\varepsilon+1}^{-n-k_\varepsilon}\hat\sigma^{j}\mathcal{P}$ are simply the partitions into sets of the form $[\iii]$ and into sets of the form $\sigma^{-k_\varepsilon-|\iii|} [\iii]$ respectively, where $|\iii|=n$. The $\varepsilon$-independence of the first partition from the second is immediate and we conclude that $\mathcal{P}$ is a weak Bernoulli partition for the transformation $\hat\sigma$ of $(\hat\Sigma_N,\overline{\mathcal{B}}_{\hat\mu},\hat\mu)$. Applying the theorem of Friedman and Ornstein proves the second assertion of Theorem \ref{th:main}.

\subsection{Proof of Theorem \ref{th:recoding}}

Before starting the proof we require the following simple lemma:
\begin{lemma}\label{le:elric-the-hedgehog}
Let $T \colon X \to X$ be an ergodic  measure-preserving transformation of a probability space $(X,\mathcal{F},\mu)$ which is not totally ergodic, and let $n>1$ be the smallest integer such that $T^n$ is not ergodic. Then there exists a measurable set $Z \subset X$ such that $Z, T^{-1}Z, \ldots, T^{-(n-1)}Z$ partitions $X$ up to measure zero and satisfies $T^{-n}Z=Z$ up to measure zero.\end{lemma}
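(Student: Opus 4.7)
The plan is to use the minimality of $n$ together with the ergodicity of $T$ to force a cyclic structure on a $T^n$-invariant set. Because $T^n$ is not ergodic, there exists a measurable set $Y \subseteq X$ with $T^{-n}Y=Y$ and $0<\mu(Y)<1$; by replacing $Y$ with a strictly invariant version (intersecting the countable collection of preimages under iterates of $T^n$) we may assume $T^{-n}Y=Y$ holds exactly, not merely modulo measure zero. Define $Y_i:=T^{-i}Y$ for $i=0,\ldots,n-1$; each $Y_i$ is strictly $T^n$-invariant, and $T^{-1}$ permutes the indexed family $\{Y_0,\ldots,Y_{n-1}\}$ by $i\mapsto i+1 \pmod n$, using $T^{-n}Y=Y$ to close the cycle.

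Now consider the finite algebra $\mathcal{A}$ generated by $Y_0,\ldots,Y_{n-1}$. Its atoms have the form
\[A_S:=\bigcap_{i\in S}Y_i\cap \bigcap_{i\notin S}Y_i^c, \qquad S\subseteq\{0,\ldots,n-1\},\]
and they form a finite measurable partition of $X$. Each $A_S$ is $T^n$-invariant, and the cyclic permutation above lifts to $T^{-1}A_S=A_{S-1}$, where $S-1:=\{i-1\bmod n\colon i\in S\}$. Hence $T^{-1}$ acts on the (finite) set of atoms as the cyclic shift on $2^{\mathbb{Z}/n\mathbb{Z}}$.

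The orbits of this action partition the atoms into groups, and the union of the atoms in each orbit is $T$-invariant. By ergodicity of $T$, exactly one orbit $\mathcal{O}$ carries full measure; the atoms of $\mathcal{O}$ are pairwise disjoint, share the same measure (since $T$ preserves $\mu$ and acts transitively on $\mathcal{O}$), and cover $X$ up to a null set. Writing $d:=|\mathcal{O}|$, each atom in $\mathcal{O}$ has measure $1/d$ and is strictly $T^d$-invariant. The main technical point, where I expect the slight subtlety, is to rule out $d<n$. The case $d=1$ is impossible: the two fixed atoms of the cyclic shift are $\bigcap_i Y_i$ and $X\setminus\bigcup_i Y_i$, and neither can have full measure without forcing $\mu(Y)$ to be $0$ or $1$. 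If $1<d<n$, then $d$ is a proper divisor of $n$ and any atom $A\in\mathcal{O}$ satisfies $T^{-d}A=A$ with $0<\mu(A)<1$, contradicting the minimality of $n$ as the smallest integer for which $T^n$ is not ergodic.

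Consequently $d=n$, and picking any $A\in\mathcal{O}$ and setting $Z:=A$ yields the desired set: $T^{-n}Z=Z$ modulo null sets, and $Z,T^{-1}Z,\ldots,T^{-(n-1)}Z$ are the distinct atoms of $\mathcal{O}$, which partition $X$ up to measure zero.
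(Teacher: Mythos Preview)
Your argument is correct and is a close cousin of the paper's, though packaged differently. The paper does not pass to the full Boolean algebra generated by $Y_0,\ldots,Y_{n-1}$; instead it selects a subset $R\subseteq\{0,\ldots,n-1\}$ of maximal cardinality with $0\in R$ and $\mu\bigl(\bigcap_{i\in R}T^{-i}Y\bigr)>0$, sets $Z:=\bigcap_{i\in R}T^{-i}Y$, and argues directly from maximality that $\mu(Z\cap T^{-i}Z)=0$ for $1\le i<n$: otherwise $R\cup(R+i\bmod n)$ would strictly enlarge $R$, unless $R=R+i$, which would make $T^i$ non-ergodic and contradict the minimality of $n$. Note that by maximality of $R$ one has $\mu(Z\cap Y_j)=0$ for every $j\notin R$, so the paper's $Z$ agrees up to a null set with your atom $A_R$; the two constructions locate the same object. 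Your presentation makes the cyclic action on $2^{\mathbb{Z}/n\mathbb{Z}}$ explicit and invokes orbit--stabilizer to see that $d\mid n$, which is conceptually cleaner; the paper's maximality device is more compact but encodes the same combinatorics. One cosmetic slip: with your conventions $T^{-1}Y_i=Y_{i+1}$, and hence $T^{-1}A_S=A_{S+1}$ rather than $A_{S-1}$; the direction of the shift is of course irrelevant to the argument.
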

\begin{proof}
Since $T^n$ is not ergodic there by definition exists a measurable set $Y \subset X$ such that $0<\mu(Y)<1$ and $T^{-n}Y=Y$ up to $\mu$-measure zero. Consider a set $R\subseteq \{0,\ldots,n-1\}$ with the properties $0 \in R$ and $\mu(\bigcap_{i \in R} T^{-i}Y)>0$ and which has maximum cardinality of all such sets. (Clearly at least one set with those two properties exists, namely $\{0\}$, so $R$ is well-defined.) Define $Z:=\bigcap_{i \in R} T^{-i}Y$ and note that clearly $T^{-n}Z=Z$ up to $\mu$-measure zero. 
For each $i \in \{1,\ldots,n-1\}$ we must have either $\mu(Z \cap T^{-i}Z)=0$ or $R=R+i \mod n$, since if neither of these holds then $R \cup (R+i \mod n)$ would have larger cardinality than $R$ while having the same characteristic properties, contradicting maximality. But if $R=R+i \mod n$ then $T^{-i}Z=Z$ up to measure zero which implies that $T^i$ is not ergodic, contradicting the definition of $n$. It follows that $\mu(Z \cap T^{-i}Z)=0$ for all $i=1,\ldots,n-1$ and hence by the $T$-invariance of $\mu$ we deduce that $T^{-i}Z$ and $T^{-j}Z$ are pairwise disjoint up to measure zero whenever $0 \leq i<j<n$. We also have $\bigcup_{i=0}^{n-1}T^{-i}Z=X$ up to measure zero since this set is $T$-invariant and has positive measure and $T$ is ergodic with respect to $\mu$, and this completes the proof.
\end{proof}
The core of the proof of Theorem \ref{th:recoding} is contained in the following result, which will be used twice in the proof.
\begin{proposition}\label{pr:od}
Let $k \geq 1$ and $N \geq 2$. For each $j=1,\ldots,k$ let $V_j$ be a finite-dimensional real vector space and let $(A_1^{(j)},\ldots,A_N^{(j)}) \in \GL(V_j)^N$ and $\beta_j>0$. For all $\iii \in \Sigma_N^*$ define
\[\Phi(\iii):=\prod_{j=1}^k \left\|A_\iii^{(j)}\right\|^{\beta_j}\]
and let $\mu$ be an ergodic equilibrium state of $\Phi$. 

Suppose that there exist an integer $n>1$ and Borel set $Z \subset \Sigma_N$ such that $T^{-n}Z=Z$ and $Z,T^{-1}Z,\ldots,T^{-(n-1)}Z$ is a partition of $\Sigma_N$, both up to $\mu$-measure zero. Define a measure $\nu$ on $\Sigma_N$ by $\nu(A):=\mu(A \cap Z)/\mu(Z)$ for all Borel sets $A \subseteq \Sigma_N$. Let $\eta \colon  \{\iii \in \Sigma_N^* \colon |\iii|=n\}\to \{1,\ldots,N^n\}$ be the map which takes each word $\iii \in \Sigma_N^*$ of length $n$ to the integer representing its position in the lexicographical ordering on $\{\iii \in \Sigma_N^* \colon |\iii|=n\}$, and define a homeomorphism $\iota \colon \Sigma_{N} \to \Sigma_{N^n}$ by $\iota[(x_\ell)_{\ell=1}^\infty]:=(\eta(x_{(q-1)n+1}\cdots x_{qn}))_{q=1}^\infty$.  For each $j=1,\ldots,k$ define an $N^n$-tuple $(B_1^{(j)},\ldots,B_{N^n}^{(j)}) \in \GL(V_j)^{N^n}$ by $B_i^{(j)}:=A_{\eta^{-1}(i)}^{(j)}$ for every $i=1,\ldots,N^n$ and $j=1,\ldots,k$, and define a potential $\Psi \colon \Sigma_{N^n}^* \to(0,+\infty)$ by
\[\Psi(\jjj)=\prod_{j=1}^k \left\|B_\jjj^{(j)}\right\|^{\beta_j}\]
for all $\jjj \in \Sigma_{N^n}^*$.

Then $\mu = \frac{1}{n}\sum_{i=0}^{n-1} \sigma^i_* \nu$, each measure $\sigma^i_*\nu$ is $\sigma^n$-invariant, the measures $(\iota \circ \sigma^i)_*\nu \in \mathcal{M}_\sigma(\Sigma_{N^n})$ are pairwise mutually singular equilibrium states of $\Psi$, and $n \leq \prod_{j=1}^k \dim V_j$.
\end{proposition}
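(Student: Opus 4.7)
The plan is to verify the four assertions in order, reducing each to the key identity $Z = \sigma^{-1}Z_{n-1}$, where $Z_i := \sigma^{-i}Z$; this is an immediate consequence of the hypothesis $Z = \sigma^{-n}Z$, and shows more generally that $Z_i = \sigma^{-1}Z_{i-1 \bmod n}$ for every $i$. Setting $\mu_i(A) := n\mu(A\cap Z_i)$, each $\mu_i$ is a probability measure, since $\sigma$-invariance of $\mu$ together with the partition hypothesis forces $\mu(Z_i) = 1/n$ for every $i$. A direct calculation using $\sigma$-invariance of $\mu$ then gives $\sigma_*\nu(A) = \nu(\sigma^{-1}A) = n\mu(\sigma^{-1}A \cap \sigma^{-1}Z_{n-1}) = n\mu(\sigma^{-1}(A \cap Z_{n-1})) = \mu_{n-1}(A)$, and iterating the same manipulation yields $\sigma^i_*\nu = \mu_{-i \bmod n}$ for every $i \geq 0$; in particular $\sigma^n_*\nu = \nu$, so every $\sigma^i_*\nu$ is $\sigma^n$-invariant. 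The decomposition $\mu = \frac{1}{n}\sum_i \sigma^i_*\nu$ is then merely a restatement of the partition $\Sigma_N = \bigsqcup_i Z_i$, and pairwise mutual singularity of the measures $\sigma^i_*\nu$ follows because each is supported on a distinct element of this disjoint partition; this singularity is preserved under pushforward by the homeomorphism $\iota$.

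For the equilibrium state property, the identity $\sigma \circ \iota = \iota \circ \sigma^n$ shows that each $\iota_*\sigma^i_*\nu$ is $\sigma$-invariant on $\Sigma_{N^n}$, and it remains to verify that $h(\iota_*\sigma^i_*\nu,\sigma) + \Lambda(\Psi,\iota_*\sigma^i_*\nu) = P(\Psi)$. I would prove three component identities: (a) $P(\Psi) = nP(\Phi)$, from the bijection between length-$m$ words in $\Sigma_{N^n}^*$ and length-$mn$ words in $\Sigma_N^*$ that preserves the relevant matrix products; (b) $h(\iota_*\sigma^i_*\nu, \sigma) = nh(\mu,\sigma)$, by combining $\iota$-conjugacy with the standard identity $h(\mu, \sigma^n) = nh(\mu, \sigma)$, affineness of metric entropy on the decomposition $\mu = \frac{1}{n}\sum_j \mu_j$, and the observation that $\sigma$ is a measure-preserving factor map from $(\sigma^n,\mu_j)$ to $(\sigma^n, \mu_{j-1 \bmod n})$ so that all of the $h(\mu_j,\sigma^n)$ are equal; and (c) $\Lambda(\Psi, \iota_*\sigma^i_*\nu) = n\Lambda(\Phi, \mu)$, obtained by rewriting this quantity as $n\lim_m \frac{1}{mn}\int \log\Phi(x|_{mn}) d(\sigma^i_*\nu)(x)$ (the limit existing by sub-additivity in $m$ of the integrand, which uses $\sigma^n$-invariance of $\sigma^i_*\nu$) and then exploiting the uniform bound $|\log\Phi(\iii\jjj) - \log\Phi(\jjj)|\leq C|\iii|$ coming from invertibility of the matrices $A_i^{(j)}$ to conclude that the limit is independent of $i$, hence equal to the average $\frac{1}{n}\sum_i$ of its own values, namely $n\Lambda(\Phi,\mu)$. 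Adding (b) and (c) and invoking (a) gives the equilibrium state identity.

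For the bound $n \leq \prod_j \dim V_j$, I would decompose each of the $n$ mutually singular equilibrium states $\iota_*\sigma^i_*\nu$ into its ergodic components under $\sigma$. Every such component is itself an equilibrium state because $h+\Lambda$ is affine and maximised on an invariant set, and mutual singularity of the $\iota_*\sigma^i_*\nu$ forces the supports of their ergodic decompositions in $\mathcal{M}_\sigma(\Sigma_{N^n})$ to be pairwise disjoint. Hence $\Psi$ has at least $n$ distinct ergodic equilibrium states, and Theorem \ref{th:bomo2} applied to $\Psi$ bounds this number by $\prod_j \dim V_j$. I expect the main technical obstacle to be item (b): the factor-map argument must be combined carefully with affineness of entropy to handle the possibility that the $\mu_j$ are not individually ergodic under $\sigma^n$. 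All other steps are essentially bookkeeping around the identity $Z = \sigma^{-n}Z$ and the two parallel descriptions of words of length $mn$.
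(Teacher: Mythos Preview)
Your proposal is correct, and most of the bookkeeping matches the paper's proof exactly (the computation of $\mu(Z_i)=1/n$, the decomposition $\mu=\frac{1}{n}\sum_i\sigma^i_*\nu$, mutual singularity via the disjoint supports $Z_i$, and the final appeal to Theorem~\ref{th:bomo2} to bound $n$). The one place where you diverge from the paper is in establishing that each $(\iota\circ\sigma^i)_*\nu$ is an equilibrium state of $\Psi$. You prove this for each $i$ individually, which forces you into the cycling factor-map argument of (b) to show all the $h(\mu_j,\sigma^n)$ coincide, and into the bounded-perturbation argument of (c) to show all the $\Lambda$-values coincide. The paper avoids both of these entirely: it shows only that the \emph{average} $\iota_*\mu=\frac{1}{n}\sum_i(\iota\circ\sigma^i)_*\nu$ is an equilibrium state of $\Psi$, which is immediate from $h(\iota_*\mu)=nh(\mu)$, $\Lambda(\Psi,\iota_*\mu)=n\Lambda(\Phi,\mu)$ and $P(\Psi)=nP(\Phi)$; then, since $h(\cdot)+\Lambda(\Psi,\cdot)$ is affine and attains its supremum at $\iota_*\mu$, every summand in the convex combination must also attain the supremum and is therefore an equilibrium state. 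Your route works, but the paper's is noticeably shorter and sidesteps exactly the technical point you flagged as the main obstacle.
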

\begin{proof}
Clearly the properties of $Z$ imply $\mu(Z)=1/n$. For every Borel set $A \subseteq \Sigma_N$ we have
\begin{align*}\frac{1}{n}\sum_{i=0}^{n-1} (\sigma^i_*\nu)(A)= \frac{1}{n}\sum_{i=0}^{n-1} \frac{\mu(\sigma^{-i} A\cap Z)}{\mu(Z)} & = \sum_{i=0}^{n-1} \mu(\sigma^{-i}A \cap Z)\\
&= \sum_{i=0}^{n-1}\mu(A \cap \sigma^{-(n-i)}Z) = \mu(A) \end{align*}
so that $\mu = \frac{1}{n}\sum_{i=0}^{n-1} \sigma^i_* \nu$, and similarly
\[(\sigma^i_*\nu)(\sigma^{-n}A)=\frac{\mu(\sigma^{-i-n}A \cap Z)}{\mu(Z)}=\frac{\mu(\sigma^{-i-n} \cap \sigma^{-n}Z)}{\mu(Z)} =\frac{\mu(\sigma^{-i} \cap Z)}{\mu(Z)} =(\sigma^i_*\nu)(A)\]
so that each  $\sigma^i_*\nu$ is $\sigma^n$-invariant. The equation $\iota \circ \sigma^n = \sigma \circ \iota$ is obvious from the definition of $\iota$.
 It follows directly that each $(\iota \circ \sigma^i)_*\nu$ is a $\sigma$-invariant measure on $\Sigma_{N^n}$. It is easy to check from the construction of $\nu$ that the sets which have nonzero measure with respect to $\sigma^i_*\nu$ are precisely those which intersect $\sigma^{-(n-i)}Z$ in a set of nonzero $\mu$-measure, and since the sets $Z,\sigma^{-1}Z,\ldots,\sigma^{-(n-1)}Z$ are pairwise disjoint up to $\mu$-measure zero this implies that the measures $\sigma^i_*\nu$ are pairwise mutually singular for distinct $i \in \{0,\ldots,n-1\}$. Since $\iota$ is a homeomorphism this implies that the measures $(\iota \circ \sigma^i)_*\nu$ are pairwise mutually singular for distinct $i \in \{0,\ldots,n-1\}$. By Theorem \ref{th:bomo2} there can be at most $ \prod_{j=1}^k \dim V_j$ distinct ergodic equilibrium states for $\Psi$, which are necessarily pairwise mutually singular since they are distinct ergodic measures. By standard ergodic decomposition arguments every equilibrium state of $\Psi$ arises as a convex combination of these ergodic equilibrium states. It follows from this that the cardinality of a set of pairwise mutually singular equilibrium states of $\Psi$ cannot be larger than the cardinality of the set of ergodic equilibrium states of $\Psi$, which is bounded by $\prod_{j=1}^k \dim V_j$. Thus if we can show that every  $(\iota \circ \sigma^i)_*\nu$ is an equilibrium state of $\Psi$ then the bound $n \leq \prod_{j=1}^k \dim V_j$ follows and we will have proved the proposition. But it follows easily from the definition of equilibrium state and the fact that $h(\cdot)$ and $\Lambda(\Psi,\cdot)$ are affine functions that if a finite convex combination of invariant measures is an equilibrium state of $\Psi$, then so must be the measures which are the summands in the convex combination. So to complete the proof we need only show that the invariant measure $\iota_*\mu = \frac{1}{n}\sum_{i=0}^{n-1} (\iota \circ \sigma^i)_* \nu \in \mathcal{M}_\sigma(\Sigma_{N^n})$ is an equilibrium state of $\Psi$. But this is a straightforward calculation: the equation $\iota \circ \sigma^n=\sigma \circ \iota$ and the fact that $\iota$ is a homeomorphism together imply that $h(\iota_*\mu)=nh(\mu)$ by basic ergodic theory, and we have
\[P(\Psi)= \lim_{m \to \infty} \frac{1}{m} \log \sum_{\jjj \in \Sigma_{N^n}^* \colon |\jjj|=m} \Psi(\iii) = \lim_{m \to \infty} \frac{1}{m}\log\sum_{\jjj \in \Sigma_N^* \colon |\jjj|=nm} \Phi(\iii)= nP(\Phi)\] 
and $\Lambda(\Psi,\iota_*\mu)=n\Lambda(\Phi,\mu)$ by an almost identical calculation. The result follows.
 \end{proof}

We may now prove Theorem \ref{th:recoding}.
Let $k$, $N$, $V_j$, $(A_1^{(j)},\ldots,A_N^{(j)})$, $\beta_j$ and $\mu$ be as in the statement of the theorem. Since $\mu$ is not totally ergodic, it follows from Lemma \ref{le:elric-the-hedgehog} that there exists an integer $n>1$ with the property that there exists a measurable set $Z \subset X$ such that $Z, T^{-1}Z, \ldots, T^{-(n-1)}Z$ partitions $X$ up to $\mu$-measure zero and such that $T^{-n}Z=Z$ up to measure zero. Proposition \ref{pr:od} implies that every integer $n$ with this property is less than or equal to $\prod_{j=1}^k \dim V_j$, so we may choose a \emph{largest} such integer. Let $n$ be the largest integer with the aforementioned property, which clearly satisfies $1 < n \leq \prod_{j=1}^k \dim V_j$. Let $Z \subset \Sigma_N$ be a Borel set with the property that $Z, T^{-1}Z, \ldots, T^{-(n-1)}Z$ partitions $\Sigma_N$ up to $\mu$-measure zero and define a Borel probability measure $\nu$ on $\Sigma_N$ by $\nu(A):=\mu(A \cap Z)/\mu(Z)$ for all Borel sets $A \subseteq \Sigma_N$. By Proposition \ref{pr:od} there exist tuples $(B_1^{(j)},\ldots,B_{N^n}^{(j)}) \in \GL(V_j)^{N^n}$ and a potential $\Psi \colon \Sigma_{N^n}^* \to (0,+\infty)$ as in the statement of Theorem \ref{th:recoding} such that each of the measures $(\iota \circ \sigma^i)_*\nu \in \mathcal{M}_\sigma(\Sigma_{N^n})$ is a distinct equilibrium state of $\Psi$, and we have $\mu = \frac{1}{n}\sum_{i=0}^{n-1} \sigma^i_* \nu$.

To complete the proof of Theorem \ref{th:recoding} we must show that for every $i \in \{0,\ldots,n-1\}$ the measure $(\iota \circ \sigma^i)_*\nu$ is totally ergodic. Fix such an $i$. We will first show that $(\iota \circ \sigma^i)_*\nu$ is ergodic. If this is not the case then there exists a Borel set $A \subset \Sigma_{N^n}$ such that $\sigma^{-1}A=A$ up to $(\iota \circ \sigma^i)_*\nu$-measure zero and such that $0<((\iota \circ \sigma^i)_*\nu)(A)<1$. Define $B:=\iota^{-1}A \subset \Sigma_N$ so that $B=\iota^{-1}A=\iota^{-1}\sigma^{-1}A=\sigma^{-n}\iota^{-1}A=\sigma^{-n}B$ up to $\sigma^i_*\nu$-measure zero, and $0<(\sigma^i_*\nu)(B)<1$. We have $0<\mu(\sigma^{-i}B\cap Z)<\mu(Z)=\frac{1}{n}$ and $\sigma^{-n}(\sigma^{-i}B \cap Z)=\sigma^{-i}B \cap Z$ up to $\mu$-measure zero. But then $\bigcup_{j=0}^{n-1} \sigma^{-j}(\sigma^{-i}B \cap Z)$ is $\sigma$-invariant up to $\mu$-measure zero but has measure strictly between $0$ and $1$, contradicting the ergodicity of $\mu$. 

We may now show that  $(\iota \circ \sigma^i)_*\nu$ is totally ergodic. If it is not then by Lemma \ref{le:elric-the-hedgehog} there exist an integer $m>1$ and a Borel subset $A$ of $\Sigma_{N^n}$ such that $\sigma^{-m} A=A$ up to $(\iota \circ \sigma^i)_*\nu$-measure zero and such that $A,\sigma^{-1}A,\ldots,\sigma^{-(m-1)}A$ forms a partition of $\Sigma_{N^n}$ up to $(\iota \circ \sigma^i)_*\nu$-measure zero. Define $B:=\iota^{-1}A \subset \Sigma_N$; then $B,\sigma^{-n}B,\sigma^{-2n}B\ldots,\sigma^{-(m-1)n}B$ forms a partition of $\Sigma_N$ up to $\sigma^i_*\nu$-measure zero, and $B=\sigma^{-mn}B$ up to $\sigma^i_*\nu$-measure zero. This implies that the sets $\sigma^{-i}B,\sigma^{-n-i}B,\sigma^{-2n-i}B\ldots,\sigma^{-(m-1)n-i}B$ form a partition of $Z$ up to $\mu$-measure zero and are all $\sigma^{-mn}$-invariant up to $\mu$-measure zero. But then
\[\bigcup_{\ell=0}^{mn-1} \sigma^{-\ell}B = \bigcup_{j=0}^{n-1} \sigma^{-j} \left(\bigcup_{r=0}^{m-1} \sigma^{-i-rn}B\right)=\bigcup_{j=0}^{n-1}\sigma^{-j}Z=\Sigma_N\]
up to $\mu$-measure zero, and all of these unions are disjoint up to $\mu$-measure zero, which contradicts the maximality of $n$.  This completes the proof that each $(\iota \circ \sigma^i)_*\nu$ is totally ergodic and completes the proof of the theorem.

\subsection{Proof of Corollary \ref{co:vid19}}

Let $\mu$ be an ergodic generalised matrix equilibrium state on $\Sigma_N$. If $\mu$ is totally ergodic then the conclusion follows by Theorem \ref{th:main}, so suppose that $\mu$ is not totally ergodic. By Theorem \ref{th:recoding} there exist $n>1$ and a $\sigma^n$-invariant measure $\nu$ on $\Sigma_N$, which is totally ergodic with respect to $\sigma^n$ and is measurably isomorphic via a homeomorphism $\iota \colon \Sigma_N \to \Sigma_{N^n}$ satisfying $\iota \circ \sigma^n = \sigma \circ \iota$ to a generalised matrix equilibrium state on $\Sigma_{N^n}$, such that $\mu =\frac{1}{n}\sum_{i=0}^{n-1}\sigma^i_* \nu$. In particular we may write $\hat\mu=\frac{1}{n}\sum_{i=0}^{n-1}\hat\sigma^i_*\hat\nu$ where each $\hat\sigma^i_*\hat\nu$ is a distinct ergodic measure with respect to the transformation $\hat\sigma^n$ and where $\hat\nu$ has the Bernoulli property with respect to the transformation $\hat\sigma^n$ as a consequence of Theorem \ref{th:main}. Since the measures $\hat\sigma^i_*\hat\nu$ are distinct ergodic measures they are pairwise mutually singular, so there exists $Z \subset \hat\Sigma_N$ such that $\hat\nu(Z)=1$ and $\hat\nu(\hat\sigma^i Z)=0$ for all $i \in \{1,\ldots,n-1\}$, and this set satisfies $\hat\sigma^nZ=Z$ up to $\hat\mu$-measure zero by the $\hat\sigma^n$-invariance of the measure $\hat\nu$. It follows that $\hat\sigma_N = Z \cup \hat\sigma^i Z \cup \cdots \cup \hat\sigma^{n-1}Z$ up to $\hat\mu$-measure zero and that these sets are pairwise disjoint up to $\hat\mu$-measure zero. By virtue of the equation $\hat\mu=\frac{1}{n}\sum_{i=0}^{n-1}\hat\sigma^i_*\hat\nu$, the measure $\hat\nu$ must be precisely the measure $\hat\mu_Z$ on $Z$ defined by $\hat\mu_Z(A):=\hat\mu(A \cap Z)/\hat\mu(Z)$ for all Borel sets $A \subseteq \hat\Sigma_N$.

Let $\mathbb{P}$ be a Bernoulli measure on $\hat\Sigma_N$ which has the same entropy as $\hat\mu$. Let $\overline{\hat{\mathcal{B}}_{\hat\nu}}$ and $\overline{\hat{\mathcal{B}}_{\mathbb{P}}}$ denote the completion of the Borel $\sigma$-algebra on $\hat\Sigma_N$ with respect to the measures $\hat\nu$ and $\mathbb{P}$ respectively. Since $\hat\nu$ and $\mathbb{P}$ both have the Bernoulli property with respect to $\hat\sigma^n$, and both have the same entropy as $\hat\mu$ with respect to $\hat\sigma^n$, they are measurably isomorphic, so there exists a measure space isomorphism $\phi$ from $(\hat\Sigma_N, \overline{\hat{\mathcal{B}}_{\hat\nu}}, \hat\nu)$ to $(\hat\Sigma_N,\overline{\hat{\mathcal{B}}_{\mathbb{P}}}, \mathbb{P})$ such that $\phi \circ \hat\sigma^n = \hat\sigma^n \circ \phi$ and $\phi_*\hat\nu = \mathbb{P}$. 

Let $\mathbb{Z}_n$ denote the set $\{0,\ldots,n-1\}$ equipped with addition modulo $n$ and define a transformation $T \colon \hat\Sigma_N \times \mathbb{Z}_n\to \hat\Sigma_N \times \mathbb{Z}_n$ by $T(x,i):=(\hat\sigma x,i+1 \mod n)$. To prove the corollary we must construct a measure space isomorphism $\psi$ from $\hat\Sigma_N$ to $\hat\Sigma_N \times \mathbb{Z}_n$ which satisfies $\psi \circ \hat\sigma = T\circ \psi$ and $\psi_* \hat\mu = \mathbb{P} \times (\frac{1}{n}\sum_{i=0}^{n-1} \delta_i)$. To this end define $\psi \colon \hat\Sigma_N \to \hat\Sigma_N \times \mathbb{Z}_n$ by $\psi(x)=(\hat\sigma^i \phi(\hat\sigma^{-i}x),i)$ whenever $x \in \hat\sigma^iZ$ for some $i \in \mathbb{Z}_n$, and define $\psi(x)$ to be an arbitrary constant value in $\hat\Sigma_N \times \mathbb{Z}_n$ for all $x \in \hat\Sigma_N \setminus \bigcup_{i=0}^{n-1}\hat\sigma^iZ$. When $x \in \hat\sigma^iZ$ for $i \in \{0,\ldots,n-2\}$ we have
\[T(\psi(x))=( \hat\sigma^{i+1}\phi(\hat\sigma^{-i}x),i+1) = \psi(\hat\sigma x)\]
and when $x \in \hat\sigma^{n-1}$ we have
\[T(\psi(x))=( \hat\sigma^{n}\phi(\hat\sigma^{-(n-1)}x),0) =(\phi(\hat\sigma x),0)= \psi(\hat\sigma x)\]
so that $T\circ \psi = \psi \circ \hat\sigma$ almost everywhere with respect to $\hat\mu$. By construction we have $\phi_* \hat\mu_Z=\mathbb{P}$ and consequently $\psi_*\hat\nu= \mathbb{P} \times \delta_0$. It follows directly that 
\[\psi_*\mu = \psi_* \left(\frac{1}{n}\sum_{i=0}^{n-1} \hat\sigma_*^i \mu_Z\right) = \frac{1}{n}\sum_{i=0}^{n-1} T^i_* (\mathbb{P}\times \delta_0) = \mathbb{P}\times \left(\frac{1}{n}\sum_{i=0}^{n-1}\delta_i\right)\]
as required. This completes the construction of the isomorphism $\psi$ and proves the corollary.

\subsection{Proof of Proposition \ref{pr:not-tot}}

Fix $\alpha,\beta>0$ throughout the proof. It is obvious that $(A_1,A_2)$ and $(B_1,B_2)$ are irreducible since every nonzero proper subspace of $\mathbb{R}^2$ is one-dimensional but neither $A_2$ nor $B_1$ preserves any one-dimensional subspace of $\mathbb{R}^2$. On the other hand it is obvious that the horizontal and vertical axes in $\mathbb{R}^2$ both have finite orbit under the action of $(A_1,A_2)$ and similarly for $(B_1,B_2)$. If $\mu \in \mathcal{M}_\sigma(\Sigma_2)$ is an ergodic equilibrium state of the potential $\Phi \colon \Sigma_2^* \to (0,+\infty)$ defined by
\[\Phi(\iii):=\left\|A_\iii\right\|^\alpha \left\|B_\iii\right\|^\beta\]
then it follows by Theorem \ref{th:bomo} and the preceding observations that there exists a transitive subspace class $\mathcal{W}\subset \Gr_1(\mathbb{R}^2) \times \Gr_1(\mathbb{R}^2)$ such that $\mu$ is the unique equilibrium state of the potential
\[\Phi_{\mathcal{W}}(\iii):=\max_{(W_1,W_2) \in \mathcal{W}}\left\|A_\iii|_{W_1}\right\|^\alpha \left\|B_\iii|_{W_2}\right\|^\beta.\]
We claim that there exists a \emph{unique} transitive subspace class preserved by these pairs of matrices, which is the set
\[\mathcal{W}_0:=\{(\overline{e_1},\overline{e_1}), (\overline{e_1},\overline{e_2}), (\overline{e_2},\overline{e_1}), (\overline{e_2},\overline{e_2})\}.\]
(Here $e_1,e_2$ denotes the standard basis for $\mathbb{R}^2$ and $\overline{u}$ the one-dimensional subspace spanned by the nonzero vector $u$.) Indeed, if $\overline{u}\subset \mathbb{R}^2$ is a one-dimensional space with finite orbit under $(A_1,A_2)$ then the set $\{\overline{A^n_1 u} \colon n \geq 1\}$ must be finite, but this is the case only when $\overline{u} \in \{\overline{e_1},\overline{e_2}\}$; similarly $\{\overline{B^n_2 u} \colon n \geq 1\}$ is finite only when $\overline{u} \in \{\overline{e_1},\overline{e_2}\}$; we conclude that every equivariant subspace class must be a subset of $\mathcal{W}_0$. On the other hand it is easy to see that $\mathcal{W}_0$ is transitive: we may apply the symbol $1$ to pass from $(\overline{e_1},\overline{e_1})$ to $(\overline{e_1},\overline{e_2})$ and vice versa, or from $(\overline{e_2},\overline{e_1})$ to $(\overline{e_2},\overline{e_2})$ and vice versa, and we may apply the symbol $2$ to pass from $(\overline{e_1},\overline{e_1})$ to $(\overline{e_2},\overline{e_1})$ and vice versa, or from $(\overline{e_1},\overline{e_2})$ to $(\overline{e_2},\overline{e_2})$ and vice versa. Thus every element of $\mathcal{W}_0$ can be reached from any other element via a word of length $1$ or $2$. We conclude that it contains a unique transitive subspace class, which is $\mathcal{W}_0$ itself. It follows that the potential $\Phi$ above has a unique equilibrium state, namely the unique equilibrium state of the potential $\Phi_{\mathcal{W}_0}$ as defined above. Let us denote this unique equilibrium state by $\mu$. We wish to show that $\mu$ is not totally ergodic.

Define tuples
\[(\hat{A}_1,\hat{A}_2,\hat{A}_3,\hat{A}_4):=(A_1A_1,A_1A_2,A_2A_1,A_2A_2),\]
\[(\hat{B}_1,\hat{B}_2,\hat{B}_3,\hat{B}_4):=(B_1B_1,B_1B_2,B_2B_1,B_2B_2)\]
so that
\[\hat{A}_1=\begin{pmatrix}4&0\\0&1\end{pmatrix},\quad \hat{A}_2=\begin{pmatrix}0&2\\1&0\end{pmatrix},\quad  \hat{A}_3=\begin{pmatrix}0&1\\2&0\end{pmatrix},\quad  \hat{A}_4=\begin{pmatrix}1&0\\0&1\end{pmatrix},\]
\[\hat{B}_1=\begin{pmatrix}1&0\\0&1\end{pmatrix},\quad \hat{B}_2=\begin{pmatrix}0&2\\1&0\end{pmatrix},\quad  \hat{B}_3=\begin{pmatrix}0&1\\2&0\end{pmatrix},\quad  \hat{B}_4=\begin{pmatrix}1&0\\0&4\end{pmatrix} \]
and define a potential $\hat\Phi \colon \Sigma_4^* \to (0,+\infty)$ by
\[\hat\Phi(\iii):=\left\|\hat{A}_\iii\right\|^\alpha \left\|\hat{B}_\iii\right\|^\beta.\]
By arguments similar to that used in Theorem \ref{th:recoding} we may define a recoding homeomorphism $\iota \colon \Sigma_2 \to \Sigma_4$ such that $\iota \circ \sigma^2 = \sigma \circ \iota$ and such that $\hat\Phi(\iota(x)|_n)= \Phi(x|_{2n})$ for every $x \in \Sigma_2$. Easy calculations similar to those occurring in the proof of Proposition \ref{pr:od} show directly that $P(\hat\Phi)=2P(\Phi)$ and that $\iota_*\mu$ is an equilibrium state of $\hat\Phi$. To prove that $\mu$ is not totally ergodic we will show that $\iota_*\mu$ is not ergodic with respect to $\sigma \colon \Sigma_4 \to \Sigma_4$, which combined with the identity $\iota \circ \sigma^2 = \sigma \circ \iota$ implies immediately that $\mu$ is not ergodic with respect to $\sigma^2 \colon \Sigma_2 \to \Sigma_2$. Since $\iota_*\mu$ is an equilibrium state of $\hat\Phi$ it will suffice for us to identify the ergodic equilibrium states of $\hat\Phi$ and show that $\iota_*\mu$ cannot be equal to any of them.


Define
\[\mathcal{W}_1:=\{(\overline{e_1},\overline{e_1}), (\overline{e_2},\overline{e_2})\},\]
\[\mathcal{W}_2:=\{(\overline{e_1},\overline{e_2}), (\overline{e_2},\overline{e_1})\}.\]
We observe that both $\mathcal{W}_1$ and $\mathcal{W}_2$ are transitive subspace classes for $(\hat{A}_1,\hat{A}_2,\hat{A}_3,\hat{A}_4)$ and $(\hat{B}_1,\hat{B}_2,\hat{B}_3,\hat{B}_4)$: the symbols $1$ and $4$ fix every pair $(\overline{e_i},\overline{e_j})$ and the symbols $2$ and $3$ swap $(\overline{e_1},\overline{e_1})$ with $(\overline{e_2},\overline{e_2})$ and swap $(\overline{e_1},\overline{e_2})$ with $(\overline{e_2},\overline{e_1})$. Similarly to our analysis of $\Phi$, since the co-ordinate axes are the only one-dimensional subspaces which have finite orbit under $\hat{A}_1$ and $\hat{B}_4$, every transitive subspace class for $(\hat{A}_1,\hat{A}_2,\hat{A}_3,\hat{A}_4)$ and $(\hat{B}_1,\hat{B}_2,\hat{B}_3,\hat{B}_4)$ must be a subset of $\mathcal{W}_0$. It follows that there exist exactly \emph{two} transitive subspace classes for these tuples, $\mathcal{W}_1$ and $\mathcal{W}_2$. Define potentials $\hat\Phi_1,\hat\Phi_2 \colon \Sigma_4^* \to (0,+\infty)$ by
\[\hat\Phi_{i}(\iii):=\max_{(W_1,W_2) \in \mathcal{W}_i}\left\|\hat{A}_\iii|_{W_1}\right\|^\alpha \left\|\hat{B}_\iii|_{W_2}\right\|^\beta.\]
for $i=1,2$. By Theorem \ref{th:bomo} each of $\hat\Phi_1$ and $\hat\Phi_2$ has a unique equilibrium state which we denote respectively $\mu_1$ and $\mu_2$, and furthermore every ergodic equilibrium state of $\hat\Phi$ must be equal to one of these two measures. In particular at least one of the two measures is an equilibrium state for $\hat\Phi$. Straightforward checking of definitions demonstrates that for $i=1,2$ the measure $\mu_i$ is an equilibrium state of $\hat\Phi$ if and only if $P(\hat\Phi_i)=P(\hat\Phi)$.

Suppose for a contradiction that $\mu$ is totally ergodic. In particular $\mu$ is ergodic with respect to $\sigma^2 \colon \Sigma_2 \to \Sigma_2$ and therefore $\iota_*\mu$ is ergodic with respect to $\sigma \colon \Sigma_4 \to \Sigma_4$. Hence $\iota_*\mu$ is an ergodic equilibrium state of $P(\hat\Phi)$ and there exists $i \in \{1,2\}$ such that $\iota_*\mu=\mu_i$ and $P(\hat\Phi_i)=P(\hat\Phi)$. It follows from Theorem \ref{th:bomo} that there exists $C_1>0$ such that
\[C_1^{-1} e^{-nP(\Phi)}\Phi(x|_n) \leq \mu([x|_n]) \leq C_1e^{-n P(\Phi)}\Phi(x|_n)\]
for every $n \geq 1$ and $x \in \Sigma_2$, and also that there exists $C_2>0$ such that 
\[C^{-1}_2 e^{-nP(\hat\Phi_i)}\hat\Phi_i(z|_n) \leq \mu_i([z|_n]) \leq C_2e^{-n P(\hat\Phi_i)}\hat\Phi_i(z|_n)\]
for every $z \in \Sigma_4$. Recoding via $\iota$, the former inequalities imply 
\[C^{-1}_1 e^{-nP(\hat\Phi)}\hat\Phi(z|_n) \leq \left(\iota_*\mu\right)([z|_n])\leq C_1e^{-n P(\hat\Phi)}\hat\Phi(z|_n)\]
for every $z \in \Sigma_4$ and $n \geq 1$, and since $\iota_*\mu=\mu_i$ and $P(\hat\Phi_i)=P(\hat\Phi)$ by hypothesis we conclude that necessarily 
\begin{equation}\label{eq:bunflow} C_1^{-1}C_2^{-1} \leq \frac{\hat\Phi(\iii)}{\hat\Phi_i(\iii)}\leq C_1C_2\end{equation}
for every $\iii \in \Sigma_4^*$. But if  $\iii = 1^n4^n$ then 
\[\hat{A}_\iii = \hat{A}_1^n \hat{A}_4^n  = A_1^{2n} A_2^{2n} = \begin{pmatrix}4^n&0\\ 0&1\end{pmatrix},\]
\[\hat{B}_\iii = \hat{B}_1^n \hat{B}_4^n = B_1^{2n} B_2^{2n} = \begin{pmatrix}1&0\\ 0&4^n\end{pmatrix}\]
and $\hat\Phi_1(\iii)= \max\{4^{n\alpha},4^{n\beta}\}$ whereas $\hat\Phi(\iii)=4^{n(\alpha+\beta)}$, so \eqref{eq:bunflow} cannot hold for $i=1$; and if $\jjj = 31^{n-1} 24^{n-1}$ for some $n \geq 1$ then
\begin{align*}\hat{A}_\jjj = \hat{A}_3 \hat{A}_1^{n-1} \hat{A}_2 \hat{A}_4^{n-1} &= (A_2A_1)(A_1A_1)^{n-1} (A_1A_2) (A_2A_2)^{n-1}\\
&= A_2 A_1^{2n} A_2^{2n-1} = \begin{pmatrix}1&0\\ 0&4^{n}\end{pmatrix}\end{align*}
and
\begin{align*}\hat{B}_\jjj = \hat{B}_3 \hat{B}_1^{n-1} \hat{B}_2 \hat{B}_4^{n-1} &= (B_2B_1)(B_1B_1)^{n-1} (B_1B_2) (B_2B_2)^{n-1}\\
&= B_2 B_1^{2n} B_2^{2n-1} = \begin{pmatrix}1&0\\ 0&4^{n}\end{pmatrix}\end{align*}
so that $\hat\Phi_2(\jjj)= \max\{4^{n\alpha},4^{n\beta}\}$, but clearly we have $\hat\Phi(\jjj)=4^{n(\alpha+\beta)}$. It follows that \eqref{eq:bunflow} also cannot hold for $i=2$. We conclude that neither $\mu_1$ nor $\mu_2$ can be equal to $\iota_*\mu$, and since this exhausts the ergodic equilibrium states of $\hat\Phi$ the equilibrium state $\iota_*\mu$ cannot be ergodic, so $\mu$ cannot be ergodic with respect to $\sigma^2$ and hence is not totally ergodic as required. This completes the proof.

\emph{Remark.} If $\hat\Phi$ had a unique ergodic equilibrium state then it would have to be equal to either $\mu_1$ or $\mu_2$ and also to $\iota_*\mu$, which has been shown to be impossible, so $\hat \Phi$ cannot have a unique ergodic equilibrium state. By elimination the only possibility is that \emph{both} of  $\mu_1$ and $\mu_2$ are equilibrium states and that $\iota_*\mu$ is equal to a strict linear combination of these two mutually singular measures. On the other hand Theorem \ref{th:recoding} implies that $\mu$ must be equal to a \emph{balanced} linear combination of two mutually singular $\sigma^2$-invariant measures, and we conclude that necessarily $\iota_*\mu = \frac{1}{2}\mu_1+\frac{1}{2}\mu_2$. 

\section{Acknowledgments}

This research was partially supported by the Leverhulme Trust (Research Project Grant RPG-2016-194). This research grew from extensive discussions with Jairo Bochi on possible extensions of the article \cite{Pi20} and the author is indebted to him for numerous helpful conversations around this topic.

\bibliographystyle{acm}
\bibliography{bern-biblio}
\end{document}